\theoremstyle{plain} %
    \newtheorem{theorem}{Theorem}[section]
    \newtheorem*{theorem*}{Theorem}
    \newtheorem{proposition}{Proposition}[section]
    \newtheorem*{proposition*}{Proposition}
    \newtheorem{corollary}{Corollary}[section]
    \newtheorem*{corollary*}{Corollary}
    \newtheorem{lemma}{Lemma}[section]
    \newtheorem*{lemma*}{Lemma}
    \newtheorem*{conjecture*}{Conjecture}
\theoremstyle{definition} %
    \newtheorem{definition}{Definition}[section]
    \newtheorem*{definition*}{Definition}
\theoremstyle{remark} %
    \newtheorem{remark}{Remark}[section]
    \newtheorem*{remark*}{Remark}
    \newcommand{\addQEDstyle}[2]{\AtBeginEnvironment{#1}{\pushQED{\qed}\renewcommand{\qedsymbol}{#2}}
    \AtEndEnvironment{#1}{\popQED}} %
\apptocmd{\sloppy}{\hbadness 10000\relax}{}{} %
\begin{document}

\title[Singular modules and irregular conformal blocks]{Singular modules for affine Lie algebras, and applications to irregular WZNW conformal blocks} %

\author[G. Felder \and G. Rembado]{Giovanni Felder \and Gabriele Rembado} %

\address[G. Felder]{ETH Z\"{u}rich, Department of Mathematics, R\"{a}mistrasse 101, 8092 Z\"{u}rich, Switzerland}
\email{giovanni.felder@math.ethz.ch}
\thanks{During this project, G.F. was supported in part by the National Centre of Competence
	in Research SwissMAP---The Mathematics of Physics---of the Swiss
    National Science Foundation.}

\address[G. Rembado]{Hausdorff Centre for Mathematics (HCM), Endenicher Allee 60, D-53115, Bonn, Germany}
\curraddr{Institut Montpelliérain Alexander Grothendieck,
	University of Montpellier,
	Place Eugène Bataillon,
	34090 Montpellier,
	France}
\email{gabriele.rembado@umontpellier.fr}
\thanks{During this project, G.R. was supported by the Deutsche Forschungsgemeinschaft (DFG, German Research Foundation) under Germany’s Excellence Strategy - GZ 2047/1, Projekt-ID 390685813.}

\subjclass[2020]{81T40,17B38,17B10}

\keywords{Affine Lie algebras, conformal field theory, irregular meromorphic connections, integrable quantum systems, isomonodromic deformations}

\begin{abstract}
	We give a mathematical definition of spaces of irregular vacua/covacua in genus zero, for any simple Lie algebra, working at generic noncritical level.
	This uses coinvariants of affine-Lie-algebra modules whose parameters match up with those of moduli spaces of irregular-singular meromorphic connections: the de Rham spaces.
	The Segal--Sugawara representation of the Virasoro algebra is used to show that the spaces of irregular conformal blocks assemble into a flat vector bundle over the space of isomonodromy times à la Klarès, and we provide a universal version of the resulting flat connection generalising the irregular KZ connection of Reshetikhin and the dynamical KZ connection of Felder--Markov--Tarasov--Varchenko.
\end{abstract}

{\let\newpage\relax\maketitle} %

\setcounter{tocdepth}{1}  %
\tableofcontents

\section*{Introduction and main results}

\renewcommand{\thetheorem}{\arabic{theorem}} %

In this paper we pursue the viewpoint that a natural mathematical formulation of conformal field theory (CFT) lies within the geometry of moduli spaces of meromorphic connections, and we take a step in this direction.

The prototype are the Knizhnik--Zamolodchikov equations (KZ)~\cite{knizhnik_zamolodchikov_1984_current_algebra_and_wess_zumino_model_in_two_dimensions}, in the genus-zero Wess--Zumino--Novikov--Witten model (WZNW) for 2-dimensional CFT~\cite{wess_zumino_1971_consequences_of_anomalous_ward_identities,witten_1984_nonabelian_bosonization_in_two_dimensions,novikov_1982_the_hamiltonian_formalism_and_a_multivalued_analogue_of_morse_theory}.
They were originally introduced as the partial differential equations satisfied by $n$-point correlators, and mathematically they amount to a flat connection on a vector bundle over the space of configurations of $n$-tuples of points in the complex plane~\cite{etingof_frenkel_kirillov_1998_lectures_on_representation_theory_and_knizhnik_zamolodchikov_equations}.

The construction of the flat connection relies on representation-theoretic constructions for affine Lie algebras, and on the Segal--Sugawara representation of the Virasoro algebra on affine-Lie-algebra modules~\cite{kohno_2002_conformal_field_theory_and_topology}.
An alternative derivation is possible via deformation quantisation of the Hamiltonian system controlling isomonodromic deformations of Fuchsian systems on the Riemann sphere~\cite{reshetikhin_1992_the_knizhnik_zamolodchikov_system_as_a_deformation_of_the_isomonodromy_problem,harnad_1996_quantum_isomonodromic_deformations_and_the_knizhnik_zamolodchikov_equations}, the Schlesinger system~\cite{schlesinger_1905_ueber_die_loesungen_gewisser_linearer_differentialgleichungen_als_funktionen_der_singularen_punkte}.
In particular the vector bundle where the KZ connection is defined comes from the quantisation of moduli spaces of meromorphic connections with tame/regular singularities (simple poles).

\vspace{5pt}

Thus here we consider a representation-theoretic setup for any simple Lie algebra $\mf g$ (over $\mb C$), in order to go beyond the case of regular singularities and allow for irregular (`wild') ones.
We will thus define a family of modules for $\mf g$ and for the affine Lie algebra $\wh{\mf g}$ associated with $\mf g$, which we call `singular' modules.\footnote{`Singularity modules' is also a fitting name (cf.~\cite{calaque_felder_rembado_wentworth_2024_wild_orbits_and_generalised_singularity_modules_stratifications_and_quantisation}), since they are attached to the singularity of a connection---at a point on Riemann surface.}
Their parameters match up with those of symplectic moduli spaces of (typically irregular-singular) meromorphic connections on the sphere, generalising Verma modules.

\vspace{5pt}

Indeed the regular-singular case will correspond to `tame' modules $V_{\lambda} \subseteq \wh V_{\lambda}$, which are standard Verma modules for $\mf g \subseteq \wh{\mf g}$, whose defining representations depend on characters $\mf b^+ \to \mb C$ and $\wh{\mf b}^+ \to \mb C$ of Borel subalgebras $\mf b^+ \subseteq \wh{\mf b}^+$---corresponding to positive roots within the root system given by a Cartan subalgebra $\mf h \subseteq \mf b^+$.
Such characters are encoded by linear maps $\lambda \in \mf h^{\dual}$, which in turn match up with local normal forms for (germs of) logarithmic connections, via the natural residue-pairing $\mc L\mf g \dif z \otimes_{\mb C} \mc L\mf g \to \mb C$, where $\mc L\mf g = \mf g \otimes_{\mb C} \mb C (\!( z )\!)$ is the (formal) loop algebra of $\mf g$.
Moreover, if $G$ is a Lie group integrating $\mf g$, then the $G$-action on the coadjoint orbit $\mc O \subseteq \mf g^{\dual}$ through $\lambda$ corresponds to the truncated gauge action on (the germ of) such a logarithmic connection---keeping only its principal part.
Repeating this construction at $n \geq 1$ marked points on the sphere provides a finite-dimensional description of the moduli space of isomorphism classes of logarithmic connections with prescribed positions of the poles and residue orbits, defined on holomorphically trivial bundles: this is the open part $\mc M^*_{\dR} \subseteq \mc M_{\dR}$ of the de Rham space, that enters into the nonabelian Hodge correspondence on (complex) curves.
(The full de Rham space $\mc M_{\dR}$ is obtained by removing the requirement that the bundle be holomorphically trivial, but rather just topologically trivial~\cite[Rmk.~2.1]{boalch_2001_symplectic_manifolds_and_isomonodromic_deformations}.)

Hence semiclassically one finds a complex symplectic reduction of a product of coadjoint $G$-orbits $\mc O_i \subseteq \mf g^{\dual}$, i.e.
\begin{equation}
	\label{eq:de_rham}
	\mc M^*_{\dR} = \Bigl( \prod_i \mc O_i \Bigr) \sslash_0 G \, .
\end{equation}
Its quantum counterpart is intimately related with the vector space $\ms H = \mc H_{\mf g}$ of $\mf g$-coinvariants of the tensor product $\mc H = \bigotimes_i V_{\lambda_i}$ of tame modules:\fn{
	Starting from the observation that one can deformation-quantize semisimple coadjoint orbits using the Verma modules (cf.~\cite{donin_gurevich_shnider_1996_quantization_of_function_algebras_on_semisimple_orbits_in_g_star,alekseev_lachowska_2005_invariant_star_product_on_coadjoint_orbits_and_the_shapovalov_pairing,calaque_felder_rembado_wentworth_2024_wild_orbits_and_generalised_singularity_modules_stratifications_and_quantisation}),
	and setting up quantum Hamiltonian reduction.}
it is the space of \emph{covacua}, dual to the space of \emph{vacua}.
If the level is a positive integer, and the weights are (admissible and) integral~\cite{sorger_1996_la_formule_de_verlinde}, one typically replaces affine Verma modules by integrable ones: this leads to the actual vector space of WZWN conformal blocks, while here we will work at any (noncritical) level---%
so that generically there are no nontrivial quotients.
(In particular this is better suited to taking continuous limits of the level, seen as a deformation parameter in quantisation.)

Now one very important feature are the admissible deformations, both in the semiclassical and quantum setting.
Namely as the positions of the simple poles vary the moduli spaces assemble into a symplectic fibre bundle
\begin{equation*}
	\wt{\mc M}^*_{\dR} \lra \Conf_n(\mb C) \, ,
\end{equation*}
over the space $\Conf_n(\mb C) \subseteq \mb C^n$ of (ordered) configurations of points on the complex affine line, equipped with a flat symplectic Ehresmann connection: it is the \emph{isomonodromy} connection~\cite{hitchin_1997_frobenius_manifolds}, defined here by the integrable (nonautonomous) Schlesinger system~\cite{schlesinger_1905_ueber_die_loesungen_gewisser_linearer_differentialgleichungen_als_funktionen_der_singularen_punkte}.
The leaves of this (nonlinear) connection are \emph{isomonodromic} families of (linear) meromorphic connections, i.e. the monodromy data are kept locally constant.\footnote{These data correspond to the isomorphism class of the monodromy representation of the fundamental group of the punctured sphere, with the poles removed.
	Indeed the de Rham isomonodromy connection is the pullback of the (nonabelian) Gau\ss--Manin connection on the associated family of $G$-character varieties along the Riemann--Hilbert correspondence, viz. the map taking monodromy data (which can be thought as a `global' version of the exponential $\mf g \to G$, cf. the abstract of~\cite{boalch_2017_wild_character_varieties_meromorphic_hitchin_systems_and_dynkin_graphs}).}
Hence in brief semiclassically we find a flat symplectic fibre bundle.

On the quantum side one thus looks for a (linear) flat connection on the conformal block bundle, to yield identifications of different fibres up to the braiding of the marked points, analogously to the symplectomorphisms defined by the nonlinear isomonodromy connection.
This `quantum' flat connection is precisely the KZ connection, which is intrinsically defined via the slotwise action of the Sugawara operator $L_{-1} \in \mf{Vir}$ on the tensor product $\wh{\mc H} = \bigotimes_i \wh V_{\lambda_i}$ of tame modules for the affine Lie algebra, where $\mf{Vir}$ is the Virasoro algebra (this underlies the Virasoro uniformization of $\wh{\mc M}_{0,n}$~\cite{kontsevich_1987_the_virasoro_algebra_and_teichmueller_space,benzvi_frenkel_2004_geometric_realization_of_the_segal_sugawara_construction}).
The action is compatible with that of the Lie algebra of $\mf g$-valued meromorphic functions on the punctured sphere, by Laurent expansions at the marked points: hence there is a well-defined connection on the bundle of coinvariants.

\vspace{5pt}

This is the picture that we wish to generalise on the side of the representation theory of affine Lie algebras.
Namely, to define generalisations of Verma modules we look at the symplectic geometry of moduli spaces of \emph{irregular}-singular meromorphic connections, which has been studied in much greater generality: for arbitrary genus, complex reductive structure group, polar divisor, and for any nongeneric/twisted irregular types~\cite{boalch_2001_symplectic_manifolds_and_isomonodromic_deformations,boalch_2002_g_bundles_isomonodromy_and_quantum_weyl_groups,boalch_2007_quasi_hamiltonian_geometry_of_meromorphic_connections,boalch_2014_geometry_and_braiding_of_stokes_data_fission_and_wild_character_varieties,boalch_yamakawa_2015_twisted_wild_character_varieties}.
Intrinsic definitions allow for the construction of symplectic local systems of wild character varieties generalising the above, also entering the \textit{wild} nonabelian Hodge correspondence on (complex) curves~\cite{sabbah_1999_harmonic_metrics_and_connections_with_irregular_singularities,biquard_boalch_2004_wild_nonabelian_hodge_theory_on_curves}.
We concern ourselves here with the case of genus zero, of a simple group, and untwisted irregular types; cf.~\cite{boalch_2012_hyperkaehler_manifolds_and_nonbelian_hodge_theory_of_irregular_curves,boalch_2017_wild_character_varieties_meromorphic_hitchin_systems_and_dynkin_graphs} for terminology and motivation.

Hence the open parts of de Rham spaces $\mc M^*_{\dR}$ are still defined.
Importantly one now considers isomorphism classes of connections with higher-order poles, which have local moduli at each pole parametrising the principal parts---beyond the residue term.
This may be formalised in terms of `deeper' coadjoint orbits of the dual Lie algebra $\mf g_p^{\dual}$, where
\begin{equation}
	\mf g_p = \mf g \llbracket z \rrbracket \big\slash z^p \mf g \llbracket z \rrbracket \simeq \bigoplus_{i = 0}^{p-1} \mf g \otimes z^i \, ,
\end{equation}
which is a Lie algebra of truncated $\mf g$-currents, holomorphic at $z = 0$ (a.k.a. the `Takiff' Lie algebra~\cite{takiff_1971_rings_of_invariant_polynomials_for_a_class_of_lie_algebras}).
Indeed the residue-pairing matches up $\mf g_p$ with a space of meromorphic $\mf g$-valued 1-forms, which we see as principal parts of meromorphic connections on a trivial principal $G$-bundle on the formal disc $\on{Spec} \bigl( \mb C \llbracket z \rrbracket \bigr)$.
The upshot is that one still has the description~\eqref{eq:de_rham}: now however one considers coadjoint $G_p$-orbit $\mc O_i \subseteq \mf g_p^{\dual}$, where
\begin{equation*}
	G_p = G \bigl( \mb C \llbracket z \rrbracket \big\slash z^p \mb C \llbracket z \rrbracket \bigr) \, .
\end{equation*}
This is the group of $(p-1)$-jets of bundle automorphisms of the trivial principal $G$-bundle on a (formal) disc, integrating $\mf g_p$.
The diagonal $G$-action corresponds to a change of global trivialisation of the bundle, as in the tame case, cf. the proof of~\cite[Prop.~2.1]{boalch_2001_symplectic_manifolds_and_isomonodromic_deformations}.

\vspace{5pt}

Hence we will define modules $W^{(p)}_{\chi} \subseteq \wh W^{(p)}_{\chi}$ (at depth $p \geq 1$) for $\mf g_p$ and $\wh{\mf g}$ respectively, whose defining representations depend on elements of $\mf h^{\dual}_p \subseteq \mf g_p^{\dual}$.
In turn the latter correspond to characters for Lie subalgebras generalising the positive (affine) Borels, so that for $p = 1$ they reduce to the usual tame modules (else they are `wild/irregular').
This is done in Def.~\ref{def:affine_singular_module}, which is a variations of similar definitions considered elsewhere, and which is the best suited to our viewpoint on the moduli spaces~\eqref{eq:de_rham}.
For example~\eqref{eq:affine_singular_module} has a more general scope than the `confluent Verma modules' of~\cite{jimbo_nagoya_sun_2008_remarks_on_the_confluent_kz_equations_for_sl_2_and_quantum_painleve_equations,nagoya_sun_2011_confluent_kz_equations_for_sl_n_with_poincare_rank_2_at_infinity}, since we allow for an arbitrary simple Lie algebra and for arbitrary irregular singularities (of arbitrary Poincar\'{e} rank)---note that we do not use the viewpoint of confluence.
Also we do not work in Liouville theory, i.e. we do not consider modules for the Virasoro algebra, e.g. as in~\cite{nagoya_2015_irregular_conformal_blocks_with_an_application_to_the_fifth_and_fourth_painleve_equations}.
The approach in this paper is closer to the `level subalgebra' of~\cite{fedorov_2010_irregular_wakimoto_modules_and_the_casimir_connection} (cf. also~\S~\ref{sec:dynamical_term}), or rather to one of its `more reasonable' variants (see Rmk.~4 of op. cit.).
The other variant is used in~\cite[\S~2.8]{feigin_frenkel_toledanolaredo_2010_gaudin_models_with_irregular_singularities}: in this setup the natural pairing~\eqref{eq:residue_pairing} matches the parameter of the modules with \emph{half} of principal parts of irregular meromorphic connections, contrary to~\eqref{eq:affine_singular_module}.\footnote{The viewpoint of op. cit. on meromorphic connections is different: at critical level $\kappa = -h^{\dual}$ one identifies quotients of the `universal Gaudin algebra' with algebras of functions on spaces of opers for the Langlands dual group $\prescript{L}{}{G}$ of $G$, with a view towards the geometric Langlands correspondence for loop groups~\cite{frenkel_2007_langlands_correspondence_for_loop_groups}.
(This seems closer to the quantization of the Dolbeault moduli spaces of $G$-Higgs bundles, and the Hitchin system, on the other side of the nonabelian Hodge correspondence.)}
The other two important differences with~\cite{feigin_frenkel_toledanolaredo_2010_gaudin_models_with_irregular_singularities} is that we work at \emph{noncritical} level, and that our $\mf g_p$-modules are highest-weight, leading to finite-dimensional spaces of coinvariants.
Finally, the $\mf g_p$-modules $W_\chi^{(p)}$ also appear in~\cite{wilson_2011_highest_weight_theory_for_truncated_current_lie_algebras}, and enter into the category $\mc O$ for truncated current Lie algebras of~\cite{chaffe_topley_2023_category_o_for_truncated_current_lie_algebras} (cf.~\cite{chaffe_2023_category_o_for_takiff_lie_algebras} when $p = 2$).

\vspace{5pt}

In any event, the singular modules enjoy several natural generalisations of the standard properties of tame modules, some of which we gather here.
(We will refer to `affine' modules when $\wh{\mf g}$ is involved, and to `finite' modules when $\mf g_p$ is.)

\begin{theorem}~
	\label{thm:properties_singular_modules}
	\begin{itemize}
		\item The singular modules admit explicit PBW-generators (Corr.~\ref{cor:pbw_basis_affine_singular_module}--\ref{cor:pbw_basis_finite_singular_module}).

		\item The singular modules are smooth (Lem.~\ref{lem:smooth_modules}).\footnote{Recall a $\mf g \llbracket z \rrbracket$-module is smooth if every vector is annihilated by $z^{N} \mf g \llbracket z \rrbracket \subseteq \mf g \llbracket z \rrbracket$ for $N \gg 0$.}

		\item The singular modules are $\mf h$-semisimple (Prop.~\ref{prop:diagonalisable_singular_modules}), and the finite singular modules have finite-dimensional $\mf h$-weight spaces (Prop.~\ref{prop:finite_dimensional_weight_spaces}).

		\item The finite singular modules are highest-weight $\mf g_p$-modules (Lem.~\ref{lem:highest_weight}).

		\item The singular modules are cyclically generated by a common eigenvector for the Sugawara operators $\Set{ L_n }_{n \geq p-1}$  (Prop.~\ref{prop:sugawara_eigenvalues}), which is a Gaiotto--Teschner/Bonelli--Maruyoshi--Tanzini irregular state of order $p-1$~\cite{gaiotto_teschner_2012_irregular_singularities_in_liouville_theory_and_argyres_douglas_type_gauge_theories,bonelli_maruyoshi_tanzini_2012_wild_quiver_gauge_theories}.
	\end{itemize}
\end{theorem}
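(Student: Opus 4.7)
The plan is to exploit the fact that, by construction, both singular modules are induced from a one-dimensional representation $\mb{C}_{\chi}$ of a generalised Borel subalgebra---call it $\mf{b}^{+}_{p}$ inside $\mf{g}_{p}$, respectively $\wh{\mf{b}}^{+}_{p}$ inside $\wh{\mf{g}}$. Fixing a triangular decomposition $\mf{g} = \mf{n}^{-} \oplus \mf{h} \oplus \mf{n}^{+}$ and lifting it layer by layer along the $z$-adic filtration singles out an explicit complementary Lie subalgebra to the generalised Borel. The Poincar\'{e}--Birkhoff--Witt theorem for $U(\mf{g}_{p})$ (respectively $U(\wh{\mf{g}})$) then produces an ordered-monomial basis of the induced module, obtained by applying products of the ``negative'' generators to the cyclic vector $v_{\chi}$. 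This handles the first bullet in both settings.

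The next three bullets follow from the PBW structure with little extra work. Smoothness in the affine case is a degree argument: the bracket $[X \otimes z^{i}, Y \otimes z^{j}] = [X,Y] \otimes z^{i+j}$ is non-decreasing in the $z$-adic degree, so pushing positive modes past the finitely many negative generators in a given PBW monomial never lowers the $z$-power, and the cyclic vector is annihilated by sufficiently deep modes by construction. Semisimplicity under $\mf{h}$ is immediate because $\mf{h} \subseteq \mf{b}^{+}_{p}$, each PBW generator is an $\mf{h}$-weight vector, and $v_{\chi}$ is an $\mf{h}$-eigenvector, so the PBW basis is already a weight basis. Finite-dimensionality of weight spaces in the finite case reduces to a Kostant-partition count, since only finitely many ordered monomials in the finitely many negative generators of $\mf{g}_{p}$ can realise a prescribed weight. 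The highest-weight property for the $\mf{g}_{p}$-modules then holds by inspection: $v_{\chi}$ generates, is $\mf{h}$-homogeneous, and is annihilated by the nilpotent part of $\mf{b}^{+}_{p}$.

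The last bullet is where the substantive computation sits. Using the Segal--Sugawara expression $L_{n} = \tfrac{1}{2(k + h^{\vee})} \sum_{a,m} {:} J^{a}_{-m} J^{a}_{n+m} {:}$ and acting on $v_{\chi}$ with $n \geq p-1$, one notes that for every pair $(-m,\, n+m)$ of modes summing to $n \geq p-1$, at least one entry lies deep enough to annihilate $v_{\chi}$, so after normal ordering all but finitely many terms vanish. The surviving terms are quadratic expressions in the character $\chi$ that collapse to a scalar multiple of $v_{\chi}$, producing the required common eigenvector. I expect the main obstacle to be bookkeeping at the ``boundary'' modes $n = p-1, \ldots, 2p-2$, where both factors in the Sugawara bilinear can act non-trivially on $v_{\chi}$: one must track the normal ordering, the critical shift $h^{\vee}$, and the way in which the depth-$p$ structure of the annihilator of $v_{\chi}$ enters the computation of the eigenvalue---a subtlety which is entirely absent in the tame case $p = 1$. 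Cyclicity of $v_{\chi}$ is built into the definition of the induced modules and requires nothing further.
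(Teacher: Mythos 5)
Your plan follows the paper's proof closely in each bullet, and the overall logic is the paper's: PBW-ordered monomials applied to the cyclic vector give the explicit generators (Cors.~\ref{cor:pbw_basis_affine_singular_module} and~\ref{cor:pbw_basis_finite_singular_module}), the $z$-degree bound gives smoothness (Lem.~\ref{lem:smooth_modules}), the $\mf{h}$-weight structure of the PBW family gives semisimplicity and the weak-composition count of weight-space dimensions (Props.~\ref{prop:diagonalisable_singular_modules} and~\ref{prop:finite_dimensional_weight_spaces}), and the mode-degree constraints from the annihilator of $w$ control the Sugawara computation (Prop.~\ref{prop:sugawara_eigenvalues}). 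Two imprecisions are worth flagging. First, the vector-space complement of $\mf{S}^{(p)}$ in $\wh{\mf{g}}$ is \emph{not} a Lie subalgebra: for $0 \leq i, j \leq p-1$ with $i + j \geq p$ the bracket of $\mf{n}^- \otimes z^{i}$ and $\mf{n}^- \otimes z^{j}$ lands in $z^{p}\mf{g}\llbracket z\rrbracket \subseteq \mf{S}^{(p)}$, so closure fails. The PBW argument needs only a vector-space splitting and a total order, so this is cosmetic; but it is precisely why the paper presents the affine PBW family as a generating set from which a basis can be extracted (Cor.~\ref{cor:pbw_basis_affine_singular_module}), reserving the outright basis statement for the finite case (Cor.~\ref{cor:pbw_basis_finite_singular_module}), where $\mf{n}^{-}_{p} \subseteq \mf{g}_{p}$ \emph{is} a genuine complementary subalgebra. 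Second, the sentence asserting that for \emph{every} mode pair $(-m, n+m)$ at least one factor kills the cyclic vector cannot stand: were it true, $L_{n} w$ would vanish for all $n \geq p-1$, contradicting the nonzero eigenvalues in~\eqref{eq:sugawara_eigenvalues_I}--\eqref{eq:sugawara_eigenvalues_II}. The correct assertion---which the rest of your paragraph reflects---is that only the finitely many $j \in \Set{1-p, \dotsc, p-1-n}$ survive, and for those \emph{both} modes have degree in $\Set{0, \dotsc, p-1}$: the Cartan pieces act by the nonzero scalars $a_{k}^{(-j)} a_{k}^{(n+j)}$, while $E_{\alpha} z^{-j} E^{\alpha} z^{n+j}$ contributes through $[E_{\alpha}, E^{\alpha}] z^{n}$ once $E_{\alpha} z^{-j}$ is pushed to the right where it annihilates $w$. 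You flag the boundary subtlety yourself in the very next sentence, so the intended argument is sound; just make sure the write-up does not open with a claim that the surviving terms immediately contradict.
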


We also give a formula for the (finite) dimension of the $\mf h$-weight spaces of the finite modules, generalising the usual Weyl characters of Verma modules, in~\eqref{eq:dimension_weight_space}.
The combinatorial complexity still lies in the positive root lattice, so in the archetypal case of $\mf g = \mf{sl}(2,\mb C)$ there is a simple expression (see~\eqref{eq:dimension_weight_space_sl_2}).

After establishing these properties we consider tensor products of singular modules labeled by marked points on the Riemann sphere, and study their space $\ms H$ of coinvariants for the action of $\mf g$-valued meromorphic functions with poles at the marked points.
Introducing generalisations of the standard filtrations/gradings of tame modules we prove the following.

\begin{theorem}~
	\label{thm:coinvariants_intro}
	\begin{itemize}
		\item The space $\ms H$ is canonically identified with the space of $\mf g$-coinvariants for the tensor product of finite modules (Props.~\ref{prop:surjectivity_coinvariants}--\ref{prop:kernel_coinvariants} and~\ref{prop:coinvariants_with_dual}).

		\item The space $\ms H$ is finite-dimensional if one module is tame (Cor.~\ref{cor:finite_dimensional_irregulal_conformal_blocks}).
	\end{itemize}
\end{theorem}

To ensure nontriviality of the space of coinvariants we explore two options: either replacing one of the modules at the marked points with its associated contragredient representation (see Prop.~\ref{prop:nontrial_irregular_conformal_blocks}), or restricting the action of rational function to the subalgebra of those which vanish at an unmarked point (see Rmk.~\ref{rem:nontriviality}).

\vspace{5pt}

Finally we consider noncoalescing deformations of the distinct marked points, i.e. variations of the \emph{tame} isomonodromy times.
This is not the full set of isomonodromy times, as in the most general setup one may also vary the irregular types/classes and give nonlinear differential equations for the invariance of \emph{Stokes} data along these deformation.
This goes beyond the isomonodromy times of~\cite{klares_1979_sur_une_classe_de_connexions_relatives}, as well as the `generic' case of~\cite{jimbo_miwa_mori_sato_1980_density_matrix_of_an_impenetrable_bose_gas_and_the_fifth_painleve_transcendent}.\footnote{A more recent series of papers gives an intrinsic description of irregular isomonodromy times:~\cite{doucot_rembado_tamiozzo_2022_local_wild_mapping_class_groups_and_cabled_braids,doucot_rembado_2023_topology_of_irregular_isomonodromy_times_on_a_fixed_pointed_curve,boalch_doucot_rembado_2022_twisted_local_wild_mapping_class_groups_configuration_spaces_fission_trees_and_complex_braids,
		doucot_rembado_tamiozzo_2024_moduli_spaces_of_untwisted_wild_riemann_surfaces,
		doucout_rembado_yamakawa_twisted_g_local_wild_mapping_class_groups}.}
We briefly discuss one natural setup to introduce a space of irregular isomonodromy times in \S~\ref{sec:irregular_isomonodromy_times}, and we plan to pursue its quantum version in future work, which should be more closely related to~\cite{fedorov_2010_irregular_wakimoto_modules_and_the_casimir_connection,feigin_frenkel_toledanolaredo_2010_gaudin_models_with_irregular_singularities}.
(Part of this has been taken on in~\cite{calaque_felder_rembado_wentworth_2024_wild_orbits_and_generalised_singularity_modules_stratifications_and_quantisation}.)

\vspace{5pt}

Thus in brief we allow for variations of marked points at finite distance on the sphere.
Then we use the Sugawara operators to define a flat connection on the trivial vector bundle whose fibre is the tensor product of affine singular modules, and show this is compatible with the action of rational functions on the punctured sphere.
Hence the spaces of coinvariants assemble into a flat vector bundle over the space of tame isomonodromy times, so in particular their dimension is a deformation-invariant---when finite.

Using the above results it is possible to give descriptions of the flat connections on the space $\ms H$ of coinvariants.
Considering all possible cases of our setup we recover (by design):
\begin{enumerate}
	\item the KZ connection~\cite{knizhnik_zamolodchikov_1984_current_algebra_and_wess_zumino_model_in_two_dimensions} (\S~\ref{sec:reduced_connection_tame});

	\item a variation of the Cartan term of the dynamical KZ connection~\cite{felder_markov_tarasov_varchenko_2000_differential_equations_compatible_with_kz_equations} (\S~\ref{sec:reduced_connection_wild_at_infinity}), and the very same Cartan term with a slightly different setup (\S~\ref{sec:dynamical_term});

	\item the general case of~\cite{reshetikhin_1992_the_knizhnik_zamolodchikov_system_as_a_deformation_of_the_isomonodromy_problem} (\S~\ref{sec:reduced_connection_tame_at_infinity}), which generalises the KZ connection;

	\item a generalisation of op. cit. with nontrivial action on the module at infinity (\S~\ref{sec:reduced_connection_general}).
\end{enumerate}

In particular the semiclassical limit of the flat connections indeed yields isomonodromy systems for irregular meromorphic connections on the sphere~\cite{klares_1979_sur_une_classe_de_connexions_relatives}, as wanted.

Note the last two items in principle descend from a more general setup, where the point at infinity is not fixed, provided one can show how horizontal sections transform under the pull-back diagonal $\PSL(2,\mb C)$-action.
Going in this direction, in \S~\ref{sec:conformal_transformations} we prove that horizontal sections of the bundle of coinvariants are naturally equivariant under the action of the subgroup of affine transformations of the affine line, with the explicit transformation~\eqref{eq:affine_transformations}.

\vspace{5pt}

Finally we abstract the formul\ae{} for the reduced connections and define a family of \emph{universal} ones: these are connections $\nabla_p$ on the trivial vector bundle with fibre $U (\mf g_p )^{\otimes n}$ for $p \geq 1$, over the space of tame isomonodromy times, which induce the above connections on $\ms H$ by taking representations.\footnote{Note the connection of~\cite{reshetikhin_1992_the_knizhnik_zamolodchikov_system_as_a_deformation_of_the_isomonodromy_problem} is given in universal terms: $\mf g_p$-modules and (co)invariants are not discussed, nor are the irregular types of irregular meromorphic connections.}
Since all induced connections are flat and well defined on $\mf g$-coinvariants, it is natural to conjecture that the same holds for the universal connections \emph{before} taking representations.

\begin{theorem}[Thms.~\ref{thm:cybe} and~\ref{thm:universal_flatness}, and Prop.~\ref{prop:g_invariance_universal_connection}]
	\label{thm:flatness_intro}
	The connection $\nabla_p$ is flat, and descends to a connection on $\mf g$-coinvariants of the tensor power $U (\mf g_p)^{\otimes n}$.
\end{theorem}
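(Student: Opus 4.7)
The plan is to separate the theorem into its two assertions: the curvature identity $F(\nabla_p) = 0$, and the compatibility with the diagonal $\mathfrak{g}$-action so that $\nabla_p$ preserves $\mathfrak{g} \cdot U(\mathfrak{g}_p)^{\otimes n}$. Write the connection one-form as $\omega = \sum_{i = 1}^{n} A_i(t) \, dt_i$, where each $A_i$ lies in $U(\mathfrak{g}_p)^{\otimes n}$ tensored with the ring of regular functions on $\mathrm{Conf}_n(\mathbb{C})$, and is extracted from the formul\ae{} for the reduced connections on $\ms{H}$ discussed in items (1)--(4) of the introduction. Since $\nabla_p = d + \omega$ acts on a trivial bundle, flatness is equivalent to the pair of conditions
\begin{equation}
    \partial_i A_j - \partial_j A_i = 0, \qquad [A_i , A_j] = 0, \qquad 1 \leq i < j \leq n.
\end{equation}

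The first, closedness of the connection form, I would verify by inspection: the universal coefficients are built out of a $\mathfrak{g}_p$-valued rational kernel $r^{(p)}(t_i - t_j)$ placed in the $(i,j)$-slots of the tensor power, and the antisymmetry $r^{(p)}(-t)_{ji} = - r^{(p)}(t)_{ij}$ of this kernel forces the cross-derivatives to agree. The commutativity $[A_i, A_j] = 0$ is the substantial point, and is where Thm.~\ref{thm:cybe} enters: expanding the commutator in partial fractions one obtains an expression in which the residues along each diagonal hyperplane $t_i = t_j$ can be recognised as an instance of a classical Yang--Baxter type identity for $r^{(p)}$ inside $U(\mathfrak{g}_p)^{\otimes 3}$. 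For $p = 1$ this reduces to the ordinary CYBE for the Casimir tensor, so the main obstacle is the inductive/direct verification of the higher-depth CYBE for the truncated-current algebras $\mathfrak{g}_p$; this should follow from the invariance of the residue pairing on $\mathfrak{g}_p$ together with the Jacobi identity, expanded order by order in the local coordinate.

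For descent to coinvariants, I would argue that each coefficient $A_i$ commutes with the diagonal embedding $\Delta \colon \mathfrak{g} \hookrightarrow U(\mathfrak{g}_p)^{\otimes n}$, i.e.\ $[\Delta(x), A_i] = 0$ for every $x \in \mathfrak{g}$. This is a $\mathfrak{g}$-invariance statement about the kernel $r^{(p)}$ itself: since it is constructed from the invariant residue pairing between $\mathfrak{g}_p$ and its dual and the natural $\mathfrak{g}$-action permutes orthonormal bases into orthonormal bases, the sum defining $A_i$ is a $\mathfrak{g}$-invariant element. Once this is in place, the differential $d$ commutes with $\Delta(\mathfrak{g})$ trivially, so $\nabla_p$ preserves the subspace $\Delta(\mathfrak{g}) \cdot U(\mathfrak{g}_p)^{\otimes n}$ and therefore induces a connection on the quotient of $\mathfrak{g}$-coinvariants. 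The flatness statement is preserved by this quotient, yielding the desired conclusion.

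Overall, the bulk of the work is the algebraic content of Thm.~\ref{thm:cybe}: once the universal kernel $r^{(p)}$ is shown to satisfy a classical Yang--Baxter relation and to be $\mathfrak{g}$-invariant, both flatness and descent follow by the standard mechanism familiar from the tame KZ setting; the nontrivial part is that the formalism of truncated currents and their residue pairing provides exactly the invariant tensor needed to run this argument at arbitrary depth $p$.
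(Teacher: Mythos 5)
Your outline captures the right high-level mechanism for the first half of the claim --- antisymmetry of a rational kernel gives closedness of the connection form, and the commutativity of the Hamiltonians reduces to a CYBE-type identity in $\mathfrak{g}_p^{\otimes 3}$ --- but it has a genuine gap: you treat the connection one-form as if it were built solely from a kernel $r_p^{(ij)}(t_i - t_j)$ placed between pairs of finite slots. In the paper the universal connection splits as $\varpi_p = \varpi'_p + \varpi''_p$, and the term $\varpi''_p$ is built from a \emph{second}, non-translation-invariant kernel
\begin{equation}
    s_p(t) = \sum_{m,l=0}^{p-1} \Omega_{m,m+l+1} \otimes \binom{m+l}{l} t^{l}
\end{equation}
coupling each finite slot $i$ to the marked point at infinity, coming from the nontrivial action on the module at $\infty$. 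Your proposal never mentions this piece, so it would at best establish the flatness of $\nabla'_p = \dif - \varpi'_p$, which is Thm.~\ref{thm:cybe} and essentially the Reshetikhin connection. Establishing the flatness of the \emph{full} $\nabla_p$ (Thm.~\ref{thm:universal_flatness}) requires controlling the mixed commutators $\bigl[ r_p^{(ij)}(t_{ij}), s_p^{(i\infty)}(t_i) \bigr]$, $\bigl[ r_p^{(ij)}(t_{ij}), s_p^{(j\infty)}(t_j) \bigr]$ and $\bigl[ s_p^{(i\infty)}(t_i), s_p^{(j\infty)}(t_j) \bigr]$, and this is a genuinely separate computation that does not follow from the CYBE for $r_p$ alone. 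Likewise, in the descent step, the $\mathfrak{g}$-invariance has to be established for both kernels; the paper does this by proving the single slot-wise identity $\sum_{k} [\Omega_{ml}^{(ij)}, X^{(k)}] = 0$, which covers $r_p$ and $s_p$ simultaneously.

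On the substance of the CYBE step itself: you defer the depth-$p$ verification to ``invariance of the residue pairing together with the Jacobi identity, expanded order by order in the local coordinate''. The paper's argument is quite different and considerably cleaner: one identifies $\mathfrak{g}_p^{\otimes k} \simeq \mathfrak{g}^{\otimes k} \otimes \mathbb{C}[[w_1,\dotsc,w_k]]/(w_1^p,\dotsc,w_k^p)$, realises $r_p(t) = \Omega \otimes \tau^{(p)}_{(0,0)}(f_t)$ and $s_p(t) = \Omega \otimes \tau^{(p)}_{(0,0)}(g_t)$ as truncated Taylor expansions of the explicit rational functions $f_t(w_i,w_j) = (t + w_i - w_j)^{-1}$ and $g_t(w_i,w_j) = w_j\bigl(1 - w_j(t+w_i)\bigr)^{-1}$, and then reduces the entire commutator computation to the $p=1$ Drinfeld--Kohno relations together with Arnold-type partial-fraction identities with $t_i$ replaced by $t_i - w_i$. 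This ``formal-coordinate shift'' is the key idea that makes the depth-$p$ case no harder than the tame case; an order-by-order attack on the truncated currents, as you sketch, would be far more laborious and it is not clear it would close without rediscovering something equivalent. You should spell out the $s_p$ kernel, write down the full commutator decomposition, and find the analogue of the Arnold identity that it satisfies, before the argument is complete.
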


Hence the singular modules provide a reasonable mathematical notion of conformal blocks in irregular versions of the genus-zero WNZW model,
which are thus related with~\cite{gaiotto_teschner_2012_irregular_singularities_in_liouville_theory_and_argyres_douglas_type_gauge_theories,yamada_2011_a_quantum_isomonodromy_equation_and_its_application_to_n_equal_2_su_n_gauge_theories}.
More precisely,
recall that irregular Liouville conformal blocks are central objects in the recent literature on the asymptotically-free extension of the Alday--Gaiotto--Tachikawa correspondence (AGT)~\cite{alday_gaiotto_tachikawa_2010_liouville_correlation_functions_from_four_dimensional_gauge_theories,gaiotto_2013_asymptotically_free_n_equal_2_theories_and_irregular_conformal_blocks}.
This uses the formalism of Whittaker modules, cf~\cite{felinska_jaskolski_kosztolowicz_2012_whittaker_pairs_for_the_virasoro_algebra_and_the_gaiotto_bonelli_maruyoshi_tanzini_states,nagoya_2015_irregular_conformal_blocks_with_an_application_to_the_fifth_and_fourth_painleve_equations}; in principle it should be possible to relate the latter with our construction for $\mf g = \mf{sl}(2,\mb C)$, perhaps passing through the duality between Liouville theory and the $H_3^+$-WZNW model~\cite{ribault_teschner_2005_h3plus_wznw_correlators_from_liouville_theory} (then in turn our construction should generalise~\cite{gaiotto_lamypoirier_2013_irregular_singularities_in_the_h3plus_wzw_model} beyond $\mf{sl}(2,\mb C)$, which is compatible with the duality~\cite{ribault_teschner_2005_h3plus_wznw_correlators_from_liouville_theory}).
(Note however that the affine singular modules are \emph{not} Whittaker modules for the standard Bonelli--Maruyoshi--Tanzini Whittaker pair, because they are not generated by the corresponding Whittaker vector; rather, they properly contain a copy of such a Whittaker module.)

\section*{Layout of the paper}

In \S~\ref{sec:setup} we consider a \emph{depth} $p \geq 1$ to introduce singular Lie algebras $\mf S^{(p)} \subseteq \wh{\mf g}$, singular characters $\chi \colon \mf S^{(p)} \to \mb C$, and affine/finite induced singular modules $W^{(p)}_{\chi} \subseteq \wh W_{\chi}$.

In \S~\ref{sec:meromorphic_connections} we explicitly match up the data $(p,\chi)$ with the local moduli for the isomorphism class of (the germ of) an irregular-singular meromorphic connection.

In \S~\ref{sec:bases_gradings_filtrations} we introduce countable PBW-bases $\mc B_W \subseteq W$ of the finite singular modules, as well as gradings and filtrations on the finite and affine singular modules: notably gradings $\mc F^+_{\bullet}$ and $\wh{\mc F}^{\pm}_{\bullet}$ for the degree in the variable `$z$', their associated filtrations, and then $\mf h$-weight gradings.

In \S~\ref{sec:dual_modules} we introduce left-module structures on (graded/restricted) dual vector spaces $\wh W^* \twoheadrightarrow W^*$.

In \S~\ref{sec:sugawara} we recall the definition of the Sugawara operators $L_n$ for $n \in \mb Z$, and prove that the cyclic vector $w \in W \subseteq \wh W$ is a Whittaker vector for the subalgebra with generators $L_n$, $n \geq p-1$.
This concludes proving the properties of Thm.~\ref{thm:properties_singular_modules}.

In \S~\ref{sec:irregular_conformal_blocks} we define the spaces of irregular covacua $\ms H$.
They are quotients of tensor products $\wh{\mc H}$ of affine singular modules labeled by marked points on the Riemann sphere with respect to the action of $\mf g$-valued meromorphic functions (and we globalise the action by introducing suitable sheaves over the space of tame isomonodromy times, in routine fashion).

In \S\S~\ref{sec:coinvariants} and~\ref{sec:irregular_conformal_blocks_space_with_duals} we study coinvariants, and we prove Thm.~\ref{thm:coinvariants_intro} using the material of \S\S~\ref{sec:bases_gradings_filtrations} and~\ref{sec:dual_modules}.

In \S~\ref{sec:connection_marked_points} we introduce the flat connection on the bundle of coinvariants, using the Sugawara operator $L_{-1}$ and fixing the point at infinity.
In \S~\ref{sec:description_finite_modules} we give explicit formul\ae{} for the reduced connection.

In \S~\ref{sec:yang_baxter} we introduce the universal connection $\nabla_p$ at depth $p \geq 1$, on the trivial vector bundle with fibre $U (\mf g_p)^{\otimes n}$ over the (restricted) space of tame isomonodromy times, and we prove Thm.~\ref{thm:flatness_intro}.

In \S~\ref{sec:conformal_transformations} we introduce the action of M\"{o}bius transformations on horizontal sections of the bundle of coinvariants, and establish equivariance under affine transformations.

Finaly in \S~\ref{sec:dynamical_term} we slightly modify the setup of \S~\ref{sec:setup} to generalise the dynamical KZ connection, i.e.~\cite[Eq.~3]{felder_markov_tarasov_varchenko_2000_differential_equations_compatible_with_kz_equations}.

\vspace{5pt}

Some standard notion is recalled in the appendix~\ref{sec:appendix_1}, and a few computations are gathered in~\ref{sec:appendix_2}.

Unless otherwise specified: affine spaces, vector spaces, vector bundles, associative/Lie algebras and tensor products are tacitly defined over $\mb C$.
The end of a remark is signaled by a `$\triangle$'.

\renewcommand{\thetheorem}{\arabic{section}.\arabic{theorem}} %

\section{Setup}
\label{sec:setup}

Let $\mf g$ be a (finite-dimensional) simple Lie algebra, and $\mf h \subseteq \mf g$ a Cartan subalgebra.
Let then $\mc R^+ \subseteq \mc R \subseteq \mf h^{\dual}$ be a choice of positive roots within the root system $\mc R = \mc R \bigl( \mf g, \mf h \bigr)$, and $\mc R^- \ceqq - \mc R^+$ the subset of negative roots.
There is a triangular/Cartan decomposition
\begin{equation*}
	\mf g = \mf n^- \oplus \mf h \oplus \mf n^+ \, ,
\end{equation*}
where $\mf n^{\pm}$ is the maximal positive/negative nilpotent subalgebra defined by the subset of positive/negative roots:
\begin{equation}
	\mf n^{\pm} \ceqq \bigoplus_{\alpha \in \mc R^{\pm}} \mf g_{\alpha} \, , \quad \mf g_{\alpha} \ceqq \Set {X \in \mf g | \bigl( \ad_{H} - \alpha(H) \bigr) X = 0 \text{ for } H \in \mf h } \, .
\end{equation}

Equip $\mf g$ with the minimal nondegenerate $\ad_{\mf g}$-invariant symmetric bilinear form $( \cdot \mid \cdot ) \colon \mf g \otimes \mf g \to \mb C$---so the highest root has length $\sqrt{2}$.
Consider then the (formal) loop algebra
\begin{equation*}
	\mc L \mf g = \mf g (\!( z )\!) \ceqq \mf g \otimes \mb C (\!( z )\!) \, ,
\end{equation*}
and let $\wh{\mf g}_{(\cdot \mid \cdot)} = \wh{\mf g} \simeq \mc L \mf g \oplus \mb C K$ be the associated affine Lie algebra (which one can further extend to the Kac--Moody version~\cite{moody_1967_lie_algebras_associated_with_generalized_cartan_matrices,kac_1990_infinite_dimensional_lie_algebras}).
The Lie bracket of $\wh{\mf g}$ is defined by imposing that $K \in \mf{Z}(\wh{\mf g})$, and
\begin{equation}
	\label{eq:affine_lie_bracket}
	\bigl[X \otimes f,Y \otimes g\bigr]_{\wh{\mf g}} = \bigl[X,Y\bigr]_{\mf g} \otimes fg + c(X \otimes f,Y \otimes g) K \, , \quad \text{for } f,g \in \mb C (\!( z )\!), \, X,Y \in \mf g \, ,
\end{equation}
where $c \colon \mc L \mf g \wedge \mc L \mf g \to \mb C$ is the Lie-algebra cocycle given by
\begin{equation}
	\label{eq:cocycle_affine_lie_algebra}
	c(X \otimes f,Y \otimes g) \ceqq (X \mid Y) \cdot \Res_{z = 0}(g df) \, ,
\end{equation}
and where in turn $\Res_{z = 0} \bigl( \omega \bigr) \ceqq f_{-1}$ for $\omega = \sum_i f_i z_i \dif z \in \mb C (\!( z )\!) \dif z$.

Then there is an analogous decomposition $\wh{\mf g} = \wh{\mf n}^- \oplus \wh{\mf h} \oplus \wh{\mf n}^+$, where
\begin{equation}
	\wh{\mf n}^+ \ceqq (\mf n^+ \otimes 1)\oplus z \mf g \llbracket z \rrbracket \, , \quad \wh{\mf n}^- \ceqq z^{-1} \mf g \bigl[ z^{-1} \bigr] \oplus (\mf n^- \otimes 1) \, , \quad \wh{\mf h} \ceqq (\mf h \otimes 1) \oplus \mb CK \, .
\end{equation}

Finally let $\mf b^{\pm} \ceqq \mf h \oplus \mf n^{\pm}$ be the positive/negative Borel subalgebras associated with the sets of positive/negative roots, and $\wh{\mf b}^{\pm} \ceqq (\mf b^{\pm} \otimes 1) \oplus z \mf g \llbracket z \rrbracket \oplus \mb CK$.

(Hereafter we drop the `$\otimes 1$' from the notation for vector subspaces of the constant part $\mf g \subseteq \mc L \mf g$, and the subscripts from the Lie brackets.)

\begin{remark*}
	The dual Coxeter number $h^{\dual}$ of the quadratic Lie algebra $\bigl( \mf g, ( \cdot \mid \cdot ) \bigr)$ is half of the eigenvalue for the adjoint action of the standard quadratic tensor on $\mf g$~\cite{kac_1990_infinite_dimensional_lie_algebras}.

	More precisely let $(X_k)_k$ be a basis of $\mf g$, $(X^k)_k$ the $(\cdot \mid \cdot)$-dual basis, and define
	\begin{equation}
		\Omega \ceqq \sum_k X_k \otimes X^k \in \mf g^{\otimes 2} \, .
	\end{equation}
	Intrinsically, this is the element corresponding to $\on{Id}_{\mf g} \in \mf g \otimes \mf g^{\dual}$ in the duality $\mf g^{\dual} \simeq \mf g$ induced by $( \cdot \mid \cdot)$.
	The projection of $\Omega$ to the universal enveloping algebra is the quadratic Casimir
	\begin{equation}
		\label{eq:quadratic_casimir}
		C = \sum_{K} X_kX^k \in U(\mf g) \, ,
	\end{equation}
	which is a central element---by the invariance of $( \cdot \mid \cdot)$.
	The adjoint action of $C$ on $\mf g$ is thus a homothety, and we define $h^{\dual}$ by
	\begin{equation}
		\ad_{C} X = %
		\sum_k \bigl[ X_k,[X^k,X] \bigr] = 2h^{\dual} X \, , \qquad \text{ for } X \in \mf g \, .
	\end{equation}

	We will also need a generalisation of the standard quadratic tensor $\Omega$.
	Namely,for $m,l \in \mb Z$ define
	\begin{equation}
		\label{eq:quadratic_tensor}
		\Omega_{ml} \ceqq \sum_k X_kz^m \otimes X^k z^l \in \mc L\mf g^{\otimes 2} \, ,
	\end{equation}
	with the shorthand notation $X z^i = X \otimes z^i$ for $X \in \mf g$ and $i \in \mb Z$.
	Then the identity $[C,X] = \sum_k \bigl[ X_kX^k,X \bigr] = 0$, valid for all $X \in \mf g$, also implies
	\begin{equation}
		\label{eq:commutation_quadratic_tensor}
		\sum_k X_k z^m \cdot \bigl[ X^k,X \bigr] z^l + \bigl[ X_k,X \bigr] z^m \cdot X^k z^l = 0 \, , \qquad \text{for } m,l \in \mb Z_{\geq 0} \, .
	\end{equation}
\end{remark*}

\subsection{Singular modules}

For an integer $p \geq 1$ consider the \emph{singular} Lie subalgebra $\mf S^{(p)} \subseteq \wh{\mf b}^+$ (of depth $p$), defined by
\begin{equation}
	\label{eq:singular_lie_algebra}
	\mf S^{(p)} \ceqq \mf b^+ \llbracket z \rrbracket + z^p \mf g \llbracket z \rrbracket \oplus \mb CK \, ,
\end{equation}
so that $\mf S^{(1)} = \wh{\mf b}^+$.

\begin{lemma}
	\label{lem:abelianisation}
	There is an identification of abelian Lie algebras
	\begin{equation}
		\label{eq:abelianised_lie_algebra_singular}
		\mf S^{(p)}_{\ab} \simeq \mf h_p \oplus \mb C K \, .
	\end{equation}
\end{lemma}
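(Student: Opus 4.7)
The plan is to exhibit an explicit surjective Lie algebra homomorphism $\pi \colon \mf{S}^{(p)} \twoheadrightarrow \mf{h}_{p} \oplus \mb{C}K$, where the target is seen as an abelian Lie algebra (consistent with its construction, $\mf{h}$ being abelian), and then to identify $\ker \pi$ with the derived subalgebra $\bigl[ \mf{S}^{(p)}, \mf{S}^{(p)} \bigr]$; the isomorphism will then follow from the universal property of the abelianisation. To define $\pi$, I would use the vector-space decomposition $\mf{S}^{(p)} \simeq \bigoplus_{i = 0}^{p-1} \mf{b}^{+} \otimes z^{i} \oplus \bigoplus_{i \geq p} \mf{g} \otimes z^{i} \oplus \mb{C}K$ and, on the low-degree summands, retain only the $\mf{h}$-component via the splitting $\mf{b}^{+} = \mf{h} \oplus \mf{n}^{+}$; I would kill all of $\mf{g} \otimes z^{i}$ for $i \geq p$, and act as the identity on $\mb{C}K$.

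Next I would check that $\pi$ is a Lie algebra homomorphism. Since the target is abelian, this amounts to showing $\pi$ annihilates every bracket. The cocycle~\eqref{eq:cocycle_affine_lie_algebra} contributes nothing inside $\mf{S}^{(p)}$, because both arguments lie in $\mf{g} \llbracket z \rrbracket$ and the residue of $z^{k} \dif z^{j}$ vanishes for $j, k \geq 0$. For the noncentral part it then suffices to consider a bracket of monomials $X z^{j}, Y z^{k} \in \mf{S}^{(p)}$: if $j + k \geq p$ the commutator $[X,Y] z^{j+k}$ already lies in $\ker \pi$, while if $j + k < p$ then necessarily $X, Y \in \mf{b}^{+}$ and $[X,Y] \in [\mf{b}^{+}, \mf{b}^{+}] \subseteq \mf{n}^{+}$ is killed by the projection onto $\mf{h}$. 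Surjectivity is then clear, since the elements $H z^{i}$ (for $H \in \mf{h}$ and $0 \leq i < p$) and $K$ all lie in $\mf{S}^{(p)}$.

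The main content is the reverse inclusion $\ker \pi \subseteq [\mf{S}^{(p)}, \mf{S}^{(p)}]$, which I would establish by exhibiting every homogeneous generator of $\ker \pi$ as a Lie bracket within $\mf{S}^{(p)}$: the identity $[H, E_{\alpha} z^{i}] = \alpha(H) E_{\alpha} z^{i}$ (with $\alpha \in \mc{R}^{+}$ and $H \in \mf{h}$ chosen so that $\alpha(H) \neq 0$) recovers $\mf{n}^{+} \otimes z^{i}$ for every $0 \leq i$; and, for $i \geq p$, the identities $[E_{\alpha}, E_{-\alpha} z^{i}] = H_{\alpha} z^{i}$ and $[H, E_{-\alpha} z^{i}] = -\alpha(H) E_{-\alpha} z^{i}$ recover the whole of $\mf{g} \otimes z^{i}$, using that the coroots span $\mf{h}$ (as $\mf{g}$ is simple).

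The main obstacle---which must be handled with care---is to ensure that both entries of each such commutator actually belong to $\mf{S}^{(p)}$: in particular $E_{-\alpha} z^{i} \in \mf{S}^{(p)}$ only when $i \geq p$, which is precisely the reason why $\mf{h} \otimes z^{i}$ for $0 \leq i < p$ cannot be realised as a bracket and thus survives in the abelianisation, contributing the full factor $\mf{h}_{p}$ on the right-hand side of~\eqref{eq:abelianised_lie_algebra_singular}.
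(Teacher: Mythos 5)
Your proof is correct and follows essentially the same route as the paper: you define the same projection $\pi$ via the decomposition $\mf{S}^{(p)} = \bigoplus_{i<p}\mf{b}^{+}\otimes z^{i} \oplus \bigoplus_{i\geq p}\mf{g}\otimes z^{i} \oplus \mb{C}K$ and identify its kernel with the derived subalgebra. The only difference is that the paper asserts $\Ker\pi = [\mf{S}^{(p)},\mf{S}^{(p)}]$ without spelling out the verification, whereas you carefully prove both inclusions (including the vanishing of the cocycle on $\mf{g}\llbracket z\rrbracket$ and the realisation of $\mf{n}^{+}\otimes z^{i}$ and $\mf{g}\otimes z^{i}$, $i\geq p$, as brackets of elements genuinely lying in $\mf{S}^{(p)}$); this is a worthwhile fleshing-out rather than a new approach.
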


\begin{proof}
	We can define a linear surjection $\pi \colon \mf S^{(p)} \twoheadrightarrow \mf h_p \oplus \mb CK$ with kernel
	\begin{equation}
		\label{eq:derived_lie_algebra_singular}
		\bigl[ \, \mf S^{(p)},\mf S^{(p)} \bigr] = \mf n^+ \llbracket z \rrbracket + z^p \mf g \llbracket z \rrbracket \, ,
	\end{equation}
	by setting
	\begin{equation}
		\sum_{i = 0}^{p-1} (H_i + X_i) \otimes z^i + z^p f + a K \longmapsto \sum_{i = 0}^{p-1} H_i \otimes z^i + a K \, ,
	\end{equation}
	where $f \in \mf g \llbracket z \rrbracket$, $a \in \mb C$, $H_i \in \mf h$, and $X_i \in \mf n^+$ for $i \in \Set{ 0,\dc,p-1}$.
\end{proof}

Characters of~\eqref{eq:singular_lie_algebra} are coded by linear maps $\mf S^{(p)}_{\ab} \to \mb C$, i.e. by elements of $\mf h_p^{\dual}$ plus the choice of a \emph{level} $\kappa \in \mb C$ for the central element---using~\eqref{eq:abelianised_lie_algebra_singular}. We split the notation: for $p = 1$ write $\lambda \in \mf h^{\dual}$ the linear map, and for $p \geq 2$ write it $(\lambda,q) \in \mf h_p^{\dual}$, where
\begin{equation*}
	q = (a_1,\dc,a_{p-1}) \in \bigl(\mf h_p \big\slash \mf h\bigr)^{\dual} \simeq \bigoplus_{i = 1}^{p-1} \bigl( \mf h \otimes z^i \bigr)^{\dual} \, .
\end{equation*}

We will refer to $\chi = \chi(\lambda,q,\kappa) \colon \mf S^{(p)} \to \mb C$ as a \emph{singular} character (of depth $p$), and we denote $\mb C_{\chi}$ the 1-dimensional left $U \bigl( \mf S^{(p)} \bigr)$-module defined by it.
We also refer to $\lambda$ as the \emph{tame} part of the singular character, and to $q$ as the \emph{wild} part.

\begin{remark*}
	This hints to the dictionary with irregular-singular meromorphic connections on the Riemann sphere: $\lambda$ corresponds to a semisimple formal residue at a simple pole (a tame/regular singularity), and $q$ to an untwisted irregular type at a higher-order pole (a wild/irregular singularity), see \S~\ref{sec:meromorphic_connections}.

	We will use the uniform notation $\lambda = a_0$ when this distinction is not relevant.
\end{remark*}

\begin{definition}[Affine singular modules]~
	\label{def:affine_singular_module}
	\begin{itemize}
		\item The affine singular module (of depth $p$) for the singular character $\chi$ is
		      \begin{equation}
			      \label{eq:affine_singular_module}
			      \wh W = \wh W^{(p)}_{\chi} \ceqq \on{Ind}_{U ( \mf S^{(p)} )}^{U ( \wh{\mf g} )} \mb C_{\chi} = U \bigl( \wh{\mf g} \bigr) \otimes_{U ( \mf S^{(p)} )} \mb C_{\chi} \, .
		      \end{equation}

		\item We write $\wh V = \wh V_{\chi} \ceqq \wh W^{(1)}_{\chi}$, and call it the tame affine module for the character $\chi = \chi(\lambda,\kappa) \colon \wh{\mf b}^+ \to \mb C$.
	\end{itemize}
\end{definition}

The latter item is the standard definition of an affine (generic) Verma module, and by definition these are level-$\kappa$ modules.\footnote{Beware a `regular' Verma modules is a Verma module defined by a \emph{dominant} weight $\lambda \in \mf h$, so `tame' is better terminology here.}

Now denote by $w = 1 \otimes_{U(\mf S^{(p)})} 1 \in \wh W$ the canonical cyclic vector; then using~\eqref{eq:abelianised_lie_algebra_singular} and \eqref{eq:derived_lie_algebra_singular} yields
\begin{equation}
	\label{eq:annihilator_cyclic_vector}
	\begin{cases}
		z^p \mf g \llbracket z \rrbracket w = (0) = \mf n^+ \llbracket z \rrbracket w \, , &                                                                 \\
		H z^i w = \Braket{ a_i | H z^i } w \, ,                                            & \quad \text{for } H \in \mf h, \, i \in \Set{ 0, \dc, p-1} \, ,
	\end{cases}
\end{equation}
plus $K w = \kappa w$.
This generalises the relations satisfied by the highest-weight vector in a tame module.

Consider now the subspace
\begin{equation*}
	\wh W^- \ceqq U \bigl( \mf g \bigl[ z^{- 1} \bigr] \bigr) w \subseteq \wh W \, .
\end{equation*}
Because of~\eqref{eq:annihilator_cyclic_vector} it equals $\wh W^- = U \bigl( \wh{\mf n}^- \bigr) w$, so it is naturally a left $U \bigl( \wh{\mf n}^- \bigr)$-module with cyclic vector $w$---and it is canonically isomorphic to $U \bigl( \wh{\mf n}^- \bigr)$ as vector space.
Furthermore matching up cyclic vectors yields an isomorphism $\wh W^- \simeq \wh V$ of left $U \bigl( \mf g \bigl[ z^{-1} \bigr] \bigr)$-modules, regardless of $p \geq 1$ and $q \in \bigl( \mf h_p \big\slash \mf h \bigr)^{\dual}$.
(Note we implicitly use a $\mb C$-basis of $U \bigl( \wh{\mf g} \bigr)$ as provided by the Poincar\'{e}--Birkhoff--Witt theorem (PBW) for countable-dimensional Lie algebras.)

Consider then the subspace
\begin{equation}
	\label{eq:embedding_finite_singular_module}
	W \ceqq U \bigl( \mf g \llbracket z \rrbracket \bigr) w \subseteq \wh W \, ,
\end{equation}
which is naturally a left $U \bigl( \mf g \llbracket z \rrbracket \bigr)$-module, and which will play a more important role.
An inductive proof on the length of monomials---with base~\eqref{eq:annihilator_cyclic_vector}---shows that $z^p \mf g \llbracket z \rrbracket W = (0)$, so thaa the $\mf g \llbracket z \rrbracket$-action factors through the finite-dimensional quotient $\mf g \llbracket z \rrbracket \twoheadrightarrow \mf g_p$: thus~\eqref{eq:embedding_finite_singular_module} is naturally a left $U(\mf g_p)$-module.
Moreover $W = U \bigl( \mf n^-_p \bigr) w$ since $\mf b^+_p w = \mb C w$, so in particular $W \simeq U (\mf n^-_p )$ as vector spaces, independently of $\chi$.

\begin{remark*}
	Here we use the triangular decomposition $\mf g_p = \mf n^-_p \oplus \mf h_p \oplus \mf n^+_p$ and the inclusion $\mf b^+_p = \mf n^+_p \oplus \mf h_p \subseteq \mf g_p$.
\end{remark*}

Now one has $\mf n^+_p = \bigl[ \mf b_p^+,\mf b_p^+ \bigr]$ and $\bigl( \mf b_p^+ \bigr)_{\ab} \simeq \mf h_p$, so by~\eqref{eq:annihilator_cyclic_vector} there is a canonical identification
\begin{equation}
	\label{eq:finite_singular_module}
	W \simeq \on{Ind}_{U (\mf b^+_p) }^{U (\mf g_p)} \mb C_{\chi} = U(\mf g_p) \otimes_{U (\mf b^+_p)} \mb C_{\chi} \, ,
\end{equation}
where we keep the notation $\chi \colon \mf b^+_p \to \mb C$ for the character defined by $(\lambda,q) \in \mf h_p^{\dual}$---the level $\kappa$ is lost.

\begin{definition}[Finite singular modules]~
	\begin{itemize}
		\item We call $W = W^{(p)}_{\chi} \subseteq \wh W$ the finite singular module (of depth $p$) for the singular character $\chi$.

		\item We write $V = V_{\chi} = W^{(1)}_{\chi}$, and call it the tame finite singular module for the character $\chi = \chi(\lambda) \colon \mf b^+ \to \mb C$.
	\end{itemize}
\end{definition}

The latter is the standard definition of a finite Verma module.
Analogously to the above, the finite tame module is canonically embedded as a $U(\mf g)$-submodule, namely as the subspace $\wh W^- \cap W = U(\mf g) w \subseteq W$.

Overall there is an identification of left $U \bigl( \wh{\mf n}^- \bigr)$-modules
\begin{equation}
	\label{eq:vector_space_isomorphism_singular_module}
	\wh W \simeq U \bigl( \wh{\mf n}^- \bigr) \otimes_{U (\mf n^-)} U (\mf n^-_p ) \, ,
\end{equation}
independent of $\chi$.

\subsection{Algebricity}

The structure of $\wh W$ as left-module is controlled by algebraic elements, not by arbitrary formal power series.

More precisely define
\begin{equation*}
	\mc L \mf g_{\alg} = \mf g \bigl[ z^{\pm 1} \bigr] \ceqq \mf g \otimes \mb C \bigl[ z^{\pm 1} \bigr] \, ,
\end{equation*}
and then $\wh{\mf g}_{\alg} \twoheadrightarrow \mc L\mf g_{\alg}$ using the restriction of the cocycle~\eqref{eq:cocycle_affine_lie_algebra}.
These are the \emph{algebraic} loop algebra and the \emph{algebraic} affine Lie algebra of $\mf g$, respectively.
Replacing `$\mf g \llbracket z \rrbracket$' by `$\mf g [z]$' in~\eqref{eq:affine_singular_module} then yields left $U \bigl( \wh{\mf g}_{\alg} \bigr)$-modules, temporarily denoted by $\wh W_{\alg}$, generated by a cyclic vector $w_{\alg} \in \wh W_{\alg}$.

On the other hand the modules $\wh W$ are left $U \bigl( \wh{\mf g}_{\alg} \bigr)$-modules via the inclusion $U \bigl( \wh{\mf g}_{\alg} \bigr) \hookrightarrow U \bigl( \wh{\mf g} \bigr)$, and composing with the canonical projection
\begin{equation}
	U \bigl( \wh{\mf g} \bigr) \twoheadrightarrow \wh W \simeq U \bigl( \wh{\mf g} \bigr) \big\slash \Ann_{U \bigl( \wh{\mf g} \bigr)} (w)
\end{equation}
yields a linear map $\iota \colon U \bigl( \wh{\mf g}_{\alg} \bigr) \to \wh W$.

\begin{lemma}
	The map $\iota$ induces an isomorphism $\wh W_{\alg} \simeq \wh W$ of left $U \bigl( \wh{\mf g}_{\alg} \bigr)$-modules.
\end{lemma}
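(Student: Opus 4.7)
The plan is to build a $U \bigl(\wh{\mf{g}}_{\alg}\bigr)$-linear candidate $\bar{\iota} \colon \wh{W}_{\alg} \to \wh{W}$ sending $w_{\alg} \mapsto w$, show that $\iota$ factors through it, and then prove $\bar{\iota}$ is bijective by running on the algebraic side the very same PBW argument that produced~\eqref{eq:vector_space_isomorphism_singular_module} on the analytic side.

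First, the singular character $\chi$ restricts to a character $\chi_{\alg} \colon \mf{S}^{(p)}_{\alg} \coloneqq \mf{S}^{(p)} \cap \wh{\mf{g}}_{\alg} \to \mb{C}$ encoding the same data $(\lambda,q,\kappa)$, because the abelianisation~\eqref{eq:abelianised_lie_algebra_singular} is realised on the finite-dimensional subspace $\mf{h}_{p} \oplus \mb{C}K \subseteq \wh{\mf{g}}_{\alg}$. By~\eqref{eq:annihilator_cyclic_vector} the cyclic vector $w \in \wh{W}$ satisfies the defining relations of the algebraic cyclic vector $w_{\alg}$ under the action of the subalgebra $U \bigl(\wh{\mf{g}}_{\alg}\bigr) \subseteq U \bigl(\wh{\mf{g}}\bigr)$, so by the universal property of the induced module $\wh{W}_{\alg}$ there is a unique $U \bigl(\wh{\mf{g}}_{\alg}\bigr)$-linear map $\bar{\iota}$ with $\bar{\iota}(w_{\alg}) = w$, and by construction $\iota(X) = \bar{\iota}(X \cdot w_{\alg})$ for $X \in U \bigl(\wh{\mf{g}}_{\alg}\bigr)$. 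It therefore suffices to prove that $\bar{\iota}$ is a bijection.

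For surjectivity, the isomorphism~\eqref{eq:vector_space_isomorphism_singular_module} together with $W = U(\mf{n}^{-}_{p}) w$ expresses every vector of $\wh{W}$ as the result of acting on $w$ with products of elements of $\wh{\mf{n}}^{-} = z^{-1} \mf{g} \bigl[z^{-1}\bigr] \oplus \mf{n}^{-}$ and of lifts of $\mf{n}^{-}_{p} \simeq \bigoplus_{i=0}^{p-1} \mf{n}^{-} \otimes z^{i}$ chosen inside $\mf{g}[z]$; these lifts all lie in $\wh{\mf{g}}_{\alg}$, whence $\wh{W} = U \bigl(\wh{\mf{g}}_{\alg}\bigr) w = \bar{\iota} \bigl( \wh{W}_{\alg} \bigr)$. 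For injectivity, observe that the algebraic vector-space decomposition
\begin{equation}
    \wh{\mf{g}}_{\alg} = \Bigl( z^{-1} \mf{g} \bigl[z^{-1}\bigr] \oplus \bigoplus_{i=0}^{p-1} \mf{n}^{-} \otimes z^{i} \Bigr) \oplus \mf{S}^{(p)}_{\alg}
\end{equation}
is genuinely a decomposition into \emph{algebraic} pieces, so the argument leading to~\eqref{eq:vector_space_isomorphism_singular_module}, applied verbatim in the algebraic setting, yields a vector-space identification $\wh{W}_{\alg} \simeq U \bigl(\wh{\mf{n}}^{-}\bigr) \otimes_{U(\mf{n}^{-})} U \bigl( \mf{n}^{-}_{p}\bigr)$ matching the one already known for $\wh{W}$. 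The map $\bar{\iota}$ identifies these two PBW-type realisations on the nose, so it is a linear isomorphism.

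The delicate point is to make sure that one may legitimately use a PBW-style decomposition along a vector-space complement of $\mf{S}^{(p)}_{\alg}$ (respectively $\mf{S}^{(p)}$) which is \emph{not} a Lie subalgebra; this is handled by the standard freeness statement that $U \bigl(\wh{\mf{g}}\bigr)$ is a free right $U \bigl(\mf{S}^{(p)}\bigr)$-module on any ordered PBW-monomial basis built from such a complement, together with its analogue in the algebraic setting, which is already tacit in the proof of~\eqref{eq:vector_space_isomorphism_singular_module}. Alternatively one may invoke smoothness (Lem.~\ref{lem:smooth_modules}): the action of $\mf{g} \llbracket z \rrbracket$ on any fixed vector factors through $\mf{g}_{N}$ for $N \gg 0$, so it is already determined by the action of $\mf{g}[z]$, confirming that no information is lost in passing from $\wh{\mf{g}}$ to $\wh{\mf{g}}_{\alg}$.
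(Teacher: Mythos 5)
Your proof is correct and takes essentially the same route as the paper's, which is considerably terser: it simply asserts the equality $\Ker(\iota) = \Ann_{U(\wh{\mf{g}}_{\alg})}(w_{\alg})$, whose nontrivial inclusion is exactly the injectivity of your $\bar{\iota}$. The extra scaffolding you supply — constructing $\bar{\iota}$ via the universal property of the induced module and then matching the PBW-type decompositions of $\wh{W}_{\alg}$ and $\wh{W}$ along the common algebraic complement of $\mf{S}^{(p)}_{\alg}$ — is precisely the justification the paper leaves implicit.
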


\begin{proof}
	By~\eqref{eq:annihilator_cyclic_vector} the map $\iota$ is surjective, since $\wh W$ is generated by the cyclic vector over $U \bigl( \mc L \mf g_{\alg} \bigr)$.
	Then its kernel is
	\begin{equation*}
		\Ker (\iota) = \Ann_{U \bigl( \wh{\mf g} \bigr)} (w) \cap U \bigl( \wh{\mf g}_{\alg} \bigr) = \Ann_{U \bigl( \wh{\mf g}_{\alg} \bigr)} (w_{\alg}) \, . \qedhere
	\end{equation*}
\end{proof}

Hence the action of meromorphic $\mf g$-valued functions on the singular modules is given by Laurent polynomials only.
(Hereafter we drop the subscript `alg' from all notations.)

\section{Relation with (irregular) meromorphic connections}
\label{sec:meromorphic_connections}

There are canonical vector space isomorphisms $\bigl( \mf g \otimes z^i \bigr)^{\dual} \simeq \mf g \otimes z^{-(i + 1)} \dif z$, for $i \in \mb Z$.
They are induced from the nondegenerate residue-pairing
\begin{equation}
	\label{eq:residue_pairing}
	\mc L \mf g \dif z \times \mc L \mf g \longrightarrow \mb C, \qquad (X \otimes \omega,Y \otimes g) \longmapsto (X \mid Y) \cdot \Res_{z = 0}(g \omega) \, ,
\end{equation}
where $\mc L \mf g \dif z \ceqq \mf g \otimes \mb C (\!( z )\!) \dif z$, $G$ is a (simple) Lie group with Lie algebra $\mf g$, and $\mc L G$ is the associated loop group.

Thus after fixing a level $\kappa \in \mb C$ the families of singular modules~\eqref{eq:affine_singular_module} and~\eqref{eq:finite_singular_module} are both naturally parametrised by elements
\begin{equation}
	\label{eq:principal_part}
	\mc{A} = \dif Q + \Lambda \frac{\dif z}{z} \in z^{-1} \mf h \bigl[ z^{-1} \bigr] \dif z \, .
\end{equation}
Namely the residue term $\Lambda z^{-1} \dif z \in \mf h \otimes z^{-1} \dif z$ corresponds to the tame part $\lambda \in \mf h^{\dual}$ of a singular character, and the irregular type
\begin{equation}
	\label{eq:irregular_type}
	Q = \sum_{i = 1}^{p - 1} \frac{A_i}{z^i} \in \mf h (\!( z )\!) \big\slash \mf h \llbracket z \rrbracket \ \, , \qquad \text{with } A_i \in \mf h \text{ for all } i  \, ,
\end{equation}
is such that $\dif \, ( A_i z^{-i}) = -i A_i z^{-i-1} \dif z \in \mf h \otimes z^{-i-1} \dif z$ corresponds to the wild part $a_i \in \bigl( \mf h \otimes z^i \bigr)^{\dual}$.
The meromorphic 1-forms~\eqref{eq:principal_part} are viewed as principal parts of germs of meromorphic connections at a point on a Riemann surface (with semisimple formal residue and untwisted irregular type; here we are considering `very good' orbits in the terminology of~\cite{boalch_2017_wild_character_varieties_meromorphic_hitchin_systems_and_dynkin_graphs}).

As mentioned in the introduction, the important facts are that:
\begin{enumerate}
	\item $\mf g_p = \Lie(G_p)$, where $G_p \ceqq G \bigl( \mb C \llbracket z \rrbracket \bigl\slash z^p \mb C \llbracket z \rrbracket \bigr)$ is the group of $(p-1)$-jets of bundle automorphisms for the trivial principal $G$-bundle on a (formal) disc;

	\item the level-zero complex symplectic reduction for the diagonal $G$-action---on products of coadjoint $G_p$-orbits---yields a description of (the open part of) a de Rham space, parameterising isomorphism classes of irregular-singular meromorphic connections on a holomorphically-trivial principal $G$-bundle over the Riemann sphere (with prescribed positions of poles and irregular types~\cite[\S~5]{boalch_2007_quasi_hamiltonian_geometry_of_meromorphic_connections}; see ~\cite{boalch_2001_symplectic_manifolds_and_isomonodromic_deformations} for $G = \GL_m(\mb C)$).
\end{enumerate}

Moreover the diagonal $G$-action will correspond to taking $\mf g$-coinvariants for the tensor product of finite singular modules, generalising the tame case (see \S\S~\ref{sec:coinvariants} and~\ref{sec:irregular_conformal_blocks_space_with_duals}).

\begin{remark}[Birkhoff groups/Lie algebras]
	\label{rem:birkhoff}

	Consider the subgroup $B_p \subseteq G_p$ of elements with constant term 1.
	Then $G$ acts on $B_p$ by conjugation, and there are natural identification $G_p \simeq G \ltimes B_p$ and $\mf g_p \simeq \mf g \ltimes \mf b_p$, where $\mf b_p = \Lie(B_p)$.\footnote{Beware to distinguish the positive/negative deeper Borel subalgebra $\mf b^{\pm}_p$ from the Birkhoff subalgebra $\mf b_p$.}
	This yields a vector space decomposition $\mf g^{\dual}_p \simeq \mf g^{\dual} \oplus \mf b_p^{\dual}$: in the duality~\eqref{eq:residue_pairing} the former summand corresponds to formal residues with zero irregular types, and the latter to irregular types with zero residue (so in particular $q \in \mf b_p^{\dual}$).
\end{remark}

\section{Bases, gradings and filtrations}
\label{sec:bases_gradings_filtrations}

Denote by $\Pi = \Set{\theta_i}_i \subseteq \mc R^+$ the base of simple roots, and choose an order $\mc R_+ = (\alpha_1, \dc, \alpha_s)$ for the system of positive roots.
If $r \ceqq \rk(\mf g)$ then we may assume that $\theta_i = \alpha_i$ for $i \in \Set{ 1,\dc,r}$.
Let then $(F_{\alpha})_{\alpha \in \mc R^+}$ and $(E_{\alpha})_{\alpha \in \mc R^+}$ be bases of $\mf n^-$ and $\mf n^+$ with $(F_{\alpha}, E_{\alpha}) \in \mf g_{-\alpha} \oplus \mf g_{\alpha}$, and such that $( F_{\alpha}, H_{\alpha} \ceqq [E_{\alpha},F_{\alpha}],E_{\alpha} )$ is an $\mf{sl}_2$-triple.
(We may at times write $E_{-\alpha} \ceqq F_{\alpha}$ for the sake of a uniform notation.)

In particular $(H_{\theta})_{\theta \in \Pi}$ is a basis of $\mf h$, and we get an ordered Cartan--Weyl basis of $\mf g$:
\begin{equation}
	\label{eq:cartan_weyl_basis}
	(X_1, \dc, X_{2s + r}) \ceqq (F_{\alpha_1}, \dc, F_{\alpha_s}, H_{\theta_1}, \dc, H_{\theta_r}, E_{\alpha_1}, \dc, E_{\alpha_s}) \, .
\end{equation}

For a multi-index $\bm n \in \mb Z_{\geq 0}^{2s + r}$ define
\begin{equation}
	X^{\bm n} \ceqq X_1^{n_1} \dm X_{2s+r}^{n_{2s+r}} \in U (\mf g) \, .
\end{equation}
By the PBW theorem these monomials provide a $\mb C$-basis of $U (\mf g)$ (cf. the monograph~\cite{dixmier_1996_algebres_enveloppantes}).

\subsection{PBW-bases of singular modules}

Let $\bm \beta = (\beta_i)_{i \geq 0}$ be a sequence of non-negative integers with finite support, and consider another sequence $\bm k$ with values in the index set of the Cartan--Weyl basis~\eqref{eq:cartan_weyl_basis}, i.e. $\bm k = (k_i)_{i \geq 0} \in \Set {1,\dc,r+2s }^{\mb Z_{\geq 0}}$.
Then define
\begin{equation}
	X_{\bm k} z^{\bm \beta} \ceqq \prod_{i \in \bm \beta^{-1} ( \mb Z_{> 0} )} X_{k_i} z^{\beta_i} \in U \bigl( \mc L\mf g_{\alg} \bigr) \, .
\end{equation}

\begin{lemma}[PBW-basis of algebraic affine enveloping algebras]~\newline
	\label{lem:pbw_basis_enveloping_algebra_affine}
	A $\mb C$-basis of $U \bigl( \mc L\mf g_{\alg} \bigr)$ is given by
	\begin{equation}
		\label{eq:pbw_basis_enveloping_algebra_affine}
		\mc B \ceqq \Set{ X_{\bm k'} z^{\bm{-\beta}'} \cdot X^{\bm n} \cdot X_{\bm k} z^{\bm \beta} }_{\bm k',\bm \beta',\bm n,\bm k,\bm \beta} \, ,
	\end{equation}
	where $\bm \beta'$ is nonincreasing, $\bm \beta$ is nondecreasing, and $k'_j \leq k'_{j+1}$ (resp. $k_j \leq k_{j+1}$) if $\beta_j = \beta_{j+1}$ (resp. $\beta'_j = \beta'_{j+1}$).
\end{lemma}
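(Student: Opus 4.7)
The plan is to recognise $\mathcal{B}$ as the set of PBW-ordered monomials for a suitable total order on a basis of the loop algebra, and then to invoke the Poincar\'{e}--Birkhoff--Witt theorem. Since the cocycle~\eqref{eq:cocycle_affine_lie_algebra} plays no role here (the statement concerns $U \bigl( \mc{L}\mf{g}_{\alg} \bigr)$, not $U \bigl( \wh{\mf{g}}_{\alg} \bigr)$), one is genuinely working with the bracket $[X \otimes f, Y \otimes g] = [X,Y] \otimes fg$ on $\mc{L}\mf{g}_{\alg} = \mf{g} \otimes \mb{C}[z^{\pm 1}]$.

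First, I would extend the Cartan--Weyl basis~\eqref{eq:cartan_weyl_basis} of $\mf{g}$ to a $\mb{C}$-basis $\Set{X_k \otimes z^n}_{k,n}$ of $\mc{L}\mf{g}_{\alg}$, with $k \in \Set{1,\dotsc,r+2s}$ and $n \in \mb{Z}$, and equip it with the lexicographic total order: declare $X_j z^m \prec X_k z^n$ iff $m < n$, or $m = n$ and $j < k$. This places all negative $z$-powers before the constant part, and the constant part before all positive $z$-powers; within a fixed $z$-power, factors are ordered by the Cartan--Weyl index.

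Next, I would check that an ordered monomial in this basis is exactly an element of $\mathcal{B}$. Such a monomial splits into three consecutive blocks of negative, zero, and positive $z$-degree. The negative-degree block has the shape $X_{\bm{k}'} z^{-\bm{\beta}'}$ with the exponents $-\beta'_{i}$ nondecreasing (equivalently, $\bm{\beta}'$ nonincreasing), and the tie-breaking rule $k'_j \leq k'_{j+1}$ whenever $\beta'_{j} = \beta'_{j+1}$ expresses the secondary ordering inside a fixed $z$-power. The zero-degree block is the PBW-ordered monomial $X^{\bm{n}} = X_{1}^{n_1} \dotsm X_{2s+r}^{n_{2s+r}}$ in $U(\mf{g})$. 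The positive-degree block $X_{\bm{k}} z^{\bm{\beta}}$ is handled symmetrically, now with $\bm{\beta}$ nondecreasing.

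Finally, I would invoke PBW for Lie algebras of countable dimension with a fixed totally ordered basis: the ordered monomials form a $\mb{C}$-basis of the universal enveloping algebra, so $\mc{B}$ is a basis of $U \bigl( \mc{L}\mf{g}_{\alg} \bigr)$. There is no genuine obstacle here; the only care needed is the bookkeeping translating the combinatorial description~\eqref{eq:pbw_basis_enveloping_algebra_affine} into the standard form of PBW-ordered monomials, together with agreeing on conventions such as identifying the empty word with $1 \in U \bigl( \mc{L}\mf{g}_{\alg} \bigr)$ and treating the supports of $\bm{\beta},\bm{\beta}'$ as the positive-indexed positions.
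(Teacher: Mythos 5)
Your proposal is correct and takes essentially the same route as the paper: the paper's proof is a single sentence invoking PBW for the countable-dimensional Lie algebra $\mc{L}\mf{g}_{\alg}$ over a totally ordered basis, and your elaboration (the lexicographic order by $z$-degree and then Cartan--Weyl index, and the identification of ordered monomials with the three-block form of $\mc{B}$) is exactly what that sentence compresses. One minor note: the statement's tie-breaking condition appears to have $\beta$ and $\beta'$ swapped ($k'_j \leq k'_{j+1}$ should be tied to $\beta'_j = \beta'_{j+1}$, not $\beta_j = \beta_{j+1}$), and you have implicitly read it the correct way.
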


This is one statement of the PBW theorem for the countable-dimensional Lie algebra $\mc L \mf g_{\alg} = \mf g \otimes \mb C \bigl[ z^{\pm 1} \bigr]$---we consider monomials over a totally-ordered basis.

\begin{corollary}[PBW-basis of affine singular modules]~\newline
	\label{cor:pbw_basis_affine_singular_module}
	A $\mb C$-basis of the affine singular module $\wh W$ can be extracted from
	\begin{equation}
		\label{eq:pbw_basis_affine_singular_module}
		\mc B_{\wh W} \ceqq \Set{ X_{\bm k'} z^{-\bm \beta'} \cdot X^{\bm n} \cdot X_{\bm k} z^{\bm \beta} w }_{\bm k',\bm \beta',\bm n,\bm k,\bm \beta} \, ,
	\end{equation}
	where $\bm \beta'$, $\bm k'$, $\bm n$, $\bm k$, and $\bm \beta$ are as above.
\end{corollary}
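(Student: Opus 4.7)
The plan is to combine the PBW spanning provided by Lem.~\ref{lem:pbw_basis_enveloping_algebra_affine} with the induced-module presentation \eqref{eq:affine_singular_module} of $\wh{W}$, and then to extract a basis using a suitable linear complement of $\mf{S}^{(p)}$ in $\wh{\mf{g}}$.

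For the spanning, the map $u \mapsto u w$ from $U \bigl( \mc{L} \mf{g}_{\alg} \bigr)$ to $\wh{W}$ is surjective (by the algebricity result just proved, together with $Kw = \kappa w$), so its image $\mc{B}_{\wh{W}}$ of the PBW basis $\mc{B}$ spans $\wh{W}$.

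For the extraction I would invoke the standard PBW theorem for induced modules applied to the pair $\bigl( \wh{\mf{g}}, \mf{S}^{(p)} \bigr)$ together with the vector space complement
\[
\mf{T} \coloneqq z^{-1} \mf{g} \bigl[ z^{-1} \bigr] \oplus \mf{n}^{-}_{p} \subseteq \wh{\mf{g}} \, ,
\]
arising from $\wh{\mf{g}} = z^{-1} \mf{g} [z^{-1}] \oplus \mf{g} \llbracket z \rrbracket \oplus \mb{C} K$, $\mf{g} \llbracket z \rrbracket = \mf{b}^{+} \llbracket z \rrbracket \oplus \mf{n}^{-} \llbracket z \rrbracket$, and $\mf{n}^{-} \llbracket z \rrbracket = \mf{n}^{-}_{p} \oplus z^{p} \mf{n}^{-} \llbracket z \rrbracket$. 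The theorem provides a vector space isomorphism $S(\mf{T}) \otimes \mb{C}_{\chi} \simeq \wh{W}$, so ordered monomials in any chosen ordered basis of $\mf{T}$ acting on $w$ form a $\mb{C}$-basis. Choosing in $\mf{T}$ the basis made of $X_{k} z^{-\beta'}$ (for Cartan--Weyl indices $k \in \Set{ 1,\dotsc,2s+r}$ and $\beta' \geq 1$) and $F_{\alpha_{j}} z^{\beta}$ (for $j \in \Set{ 1,\dotsc,s}$ and $0 \leq \beta \leq p-1$), ordered by $z$-degree and then by index as in Lem.~\ref{lem:pbw_basis_enveloping_algebra_affine}, the resulting ordered monomials are exactly the sub-collection of $\mc{B}_{\wh{W}}$ in which the middle factor $X^{\bm{n}}$ is supported on the $F_{\alpha}$-indices (i.e. $n_{j} = 0$ for $j > s$) and the rightmost factor $X_{\bm{k}} z^{\bm{\beta}}$ has $X_{k_{j}} = F_{\alpha_{j}}$ with $1 \leq \beta_{j} \leq p-1$.

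The main subtlety is to verify that the $z$-degree block structure of $\mc{B}_{\wh{W}}$---strictly negative $z$-powers on the left, then $z = 0$, then strictly positive $z$-powers on the right---is automatically compatible with this extraction. The elements of $\mc{B}_{\wh{W}}$ not in the extracted set are those whose rightmost factor contains a generator lying in $\mf{S}^{(p)}$, namely $E_{\alpha} z^{i}$ for $i \geq 0$, $H_{\theta} z^{i}$ for $0 \leq i \leq p-1$, or any $X_{k} z^{i}$ with $i \geq p$; relations \eqref{eq:annihilator_cyclic_vector} together with standard commutator manipulations reduce each such element either to zero or to a scalar multiple of a shorter monomial already in the extracted basis. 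Linear independence of the extracted set then follows directly from the PBW isomorphism $S(\mf{T}) \otimes \mb{C}_{\chi} \simeq \wh{W}$.
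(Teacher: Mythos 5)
Your spanning argument—that $u \mapsto uw$ surjects $U(\mc{L}\mf{g}_{\alg})$ onto $\wh{W}$ (via the algebricity lemma and $Kw = \kappa w$), so the image of the PBW set $\mc{B}$ of Lem.~\ref{lem:pbw_basis_enveloping_algebra_affine} spans—is exactly the paper's proof, and it is already logically sufficient: any spanning set of a vector space contains a basis, which is all the corollary asserts.

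Your extraction argument goes further than the paper (which stops at spanning here, and defers the PBW-for-induced-modules reasoning to the \emph{finite} case in Cor.~\ref{cor:pbw_basis_finite_singular_module}). The decomposition you choose,
\begin{equation}
\mf{T} = z^{-1}\mf{g}\bigl[z^{-1}\bigr] \oplus \mf{n}^{-}_{p} \, , \qquad \wh{\mf{g}} = \mf{T} \oplus \mf{S}^{(p)} \, ,
\end{equation}
is the right complement, and the vector space isomorphism $S(\mf{T}) \otimes \mb{C}_{\chi} \simeq \wh{W}$ does produce exactly the sub-collection of $\mc{B}_{\wh{W}}$ you describe (middle and right factors carrying only $F_{\alpha}z^{i}$ with $0 \leq i \leq p-1$), since the ordering of Lem.~\ref{lem:pbw_basis_enveloping_algebra_affine}, restricted to $\mf{T}$, can be matched by a total order on a basis of $\wh{\mf{g}}$ that places $\mf{T}$ before $\mf{S}^{(p)}$; this is consistent with~\eqref{eq:vector_space_isomorphism_singular_module}. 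One small slip: the reduction you sketch of the ``bad'' monomials should say they rewrite as \emph{linear combinations} of monomials in the extracted set, not as a single scalar multiple---the commutators when pushing $E_{\alpha}z^{i}$, $H_{\theta}z^{i}$, or $Xz^{i}$ with $i \geq p$ to the right produce several new terms. This imprecision is harmless here because you correctly derive both spanning and linear independence of the extracted set directly from the PBW isomorphism rather than from the reduction remark, but it would matter if the reduction were the load-bearing step.
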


\begin{proof}
	The family generates over $\mb C$ since $ U \bigl( \mc L\mf g_{\alg} \bigr) w = \wh W$, and using Lem.~\ref{lem:pbw_basis_enveloping_algebra_affine}.
\end{proof}

\begin{remark*}
	\label{rem:generating_vectors_singular_module}
	In~\eqref{eq:pbw_basis_affine_singular_module} one may take $\bm \beta$ bounded above by $p-1$, as $z^p \mf g \llbracket z \rrbracket w = (0)$.
\end{remark*}


\begin{lemma}
	\label{lem:smooth_modules}
	The singular modules are smooth.
\end{lemma}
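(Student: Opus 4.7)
The plan is to use the cyclicity of $\wh{W}$ over $U \bigl( \mc{L}\mf{g}_{\alg} \bigr)$ together with the annihilation relation $z^p \mf{g} \llbracket z \rrbracket \cdot w = (0)$ from~\eqref{eq:annihilator_cyclic_vector}. For the finite module $W = U(\mf{g}_p) w$ the text has essentially already argued that $z^p \mf{g} \llbracket z \rrbracket$ annihilates \emph{every} vector (since the $\mf{g} \llbracket z \rrbracket$-action factors through $\mf{g}_p$), which is strictly stronger than smoothness; so the real content lies in $\wh{W}$.

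I would introduce the increasing family
\begin{equation*}
    \wh{W}_N \coloneqq \Set{ v \in \wh{W} | z^N \mf{g} \llbracket z \rrbracket \cdot v = (0) } \, , \qquad N \in \mb{Z}_{\geq 0} \, ,
\end{equation*}
and set $\wh{W}_{\infty} \coloneqq \bigcup_{N} \wh{W}_{N}$. The goal then reduces to $\wh{W}_{\infty} = \wh{W}$. Since $\wh{W} = U \bigl( \mc{L}\mf{g}_{\alg} \bigr) w$ by Cor.~\ref{cor:pbw_basis_affine_singular_module}, it is enough to check that $w \in \wh{W}_{\infty}$ and that $\wh{W}_{\infty}$ is stable under the action of each generator $X z^{m} \in \mc{L}\mf{g}_{\alg}$, with $X \in \mf{g}$ and $m \in \mb{Z}$.

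The first point is~\eqref{eq:annihilator_cyclic_vector}: $w \in \wh{W}_{p}$. For the second, take $v \in \wh{W}_{N}$, any $Y \in \mf{g}$ and $N' \in \mb{Z}_{\geq 1}$; using the affine bracket~\eqref{eq:affine_lie_bracket}--\eqref{eq:cocycle_affine_lie_algebra} and the fact that $K$ acts by the scalar $\kappa$,
\begin{equation*}
    z^{N'} Y \cdot X z^{m} v = X z^{m} \cdot (z^{N'} Y \cdot v) + [Y,X] z^{N'+m} \cdot v + N'(Y \mid X) \delta_{N', -m} \kappa v \, .
\end{equation*}
Choosing $N' \geq N + \max(-m,0) + 1$ makes each summand vanish: the first because $z^{N'} Y \cdot v = 0$, the second because $z^{N'+m} [Y,X] \in z^{N} \mf{g} \llbracket z \rrbracket$ and $v \in \wh{W}_{N}$, and the central term because $N' + m \geq 1 > 0$ rules out the Kronecker delta. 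Hence $X z^{m} v \in \wh{W}_{N + \max(-m,0)+1} \subseteq \wh{W}_{\infty}$, which establishes stability and concludes the argument.

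There is no serious obstacle: the only bookkeeping subtleties are that commuting $z^{N'} Y$ past a negative-power factor $X z^{m}$ with $m < 0$ can lower the $z$-degree by $\abs{m}$, so one must enlarge the threshold accordingly, and that the central cocycle term must be avoided by the strict inequality $N' + m \neq 0$. Both are painlessly absorbed into the choice of $N'$.
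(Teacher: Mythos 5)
Your argument is correct, and it is essentially the paper's proof repackaged: the paper bounds $N$ directly on a PBW basis vector $X_{\bm{k}'} z^{-\bm{\beta}'} X^{\bm{n}} X_{\bm{k}} z^{\bm{\beta}}\, w$ by $N \geq p + \sum_i \beta'_i$, whereas you phrase the same commutation estimate as a closure argument showing the ``smooth part'' $\wh{W}_\infty$ is a $U\bigl(\mc{L}\mf{g}_{\alg}\bigr)$-submodule containing $w$. The core computation (commuting $z^{N'}Y$ past a single $Xz^m$, killing the central term by taking $N'+m>0$) is the same; your version needs only cyclicity rather than the full PBW basis, which is a mild simplification but not a different route.
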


\begin{proof}
	This is clear in the finite case, as $z^p \mf g \llbracket z \rrbracket W = (0)$.

	In the affine case choose $X \in \mf g$ and an element $\wh w = X_{\bm k'} z^{-\bm \beta'} X^{\bm n} X_{\bm k} z^{\bm \beta} w$ of~\eqref{eq:pbw_basis_affine_singular_module}.
	Then the vanishing $X z^{N} \wh w = 0$ holds for
	\begin{equation}
		N \geq p + \sum_{i \geq 0} \beta'_i \in \mb Z_{> 0} \, ,
	\end{equation}
	and the conclusion follows since~\eqref{eq:pbw_basis_affine_singular_module} is a set of generators.
\end{proof}

Then we will state the PBW theorem for the finite-dimensional Lie algebras $\mf g_p$ and $\mf n^-_p$, as follows:

\begin{lemma}[PBW-basis of depth-$p$ finite enveloping algebras]~\newline
	\label{lem:pbw_basis_enveloping_algebra_finite}
	A $\mb C$-basis of $U \bigl( \mf g_p \bigr)$ is given by
	\begin{equation}
		\label{eq:pbw_basis_finite_enveloping_algebra}
		\mc B \ceqq \Set{ X^{\bm n} \cdot X_{\bm k} z^{\bm \beta} }_{\bm n,\bm k,\bm \beta} \, ,
	\end{equation}
	where $\bm n$, $\bm k$ and $\bm \beta$ are as above, with the condition of Rmk.~\ref{rem:generating_vectors_singular_module}.
	Moreover restricting to $X_i, X_{k_j} \in \mf n^-$ for $i \in \Set{ 1,\dc, 2s+r }$ and $j \geq 0$ yields a $\mb C$-basis of $U (\mf n^-_p )$.
\end{lemma}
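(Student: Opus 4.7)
The plan is to deduce the statement directly from the Poincaré--Birkhoff--Witt theorem applied to the finite-dimensional Lie algebra $\mf{g}_{p}$ equipped with a carefully chosen totally ordered basis, exploiting the vector-space decomposition $\mf{g}_{p} \simeq \bigoplus_{i = 0}^{p - 1} \mf{g} \otimes z^{i}$ recalled in the notation section.

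First I would fix an ordered basis of $\mf{g}_{p}$ by concatenating the Cartan--Weyl basis $(X_{1},\dotsc,X_{2s+r})$ of the ``constant slice'' $\mf{g} \otimes z^{0}$ in its given order, followed by the elements $X_{k} \otimes z^{\beta}$ for $\beta \in \Set{1,\dotsc,p-1}$ and $k \in \Set{1,\dotsc,2s+r}$, ordered lexicographically by $(\beta,k)$. The PBW theorem for the finite-dimensional Lie algebra $\mf{g}_{p}$ then supplies a $\mb{C}$-basis of $U(\mf{g}_{p})$ consisting of all nondecreasing monomials in this ordered basis. Translating the nondecreasing condition through the concatenation splits any such monomial as a product of a constant-slice factor and a higher-slice factor: the former yields an expression $X^{\bm{n}} = X_{1}^{n_{1}} \dotsm X_{2s+r}^{n_{2s+r}}$ for some $\bm{n} \in \mb{Z}_{\geq 0}^{2s+r}$, while the latter yields $X_{\bm{k}} z^{\bm{\beta}}$ with $\bm{\beta}$ nondecreasing, bounded above by $p - 1$, and with $k_{j} \leq k_{j+1}$ whenever $\beta_{j} = \beta_{j+1}$. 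These are precisely the conditions of Cor.~\ref{cor:pbw_basis_affine_singular_module} together with Rem.~\ref{rem:generating_vectors_singular_module}, yielding the first assertion.

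For the second assertion I would apply the same argument to the Lie subalgebra $\mf{n}^{-}_{p} \simeq \bigoplus_{i = 0}^{p - 1} \mf{n}^{-} \otimes z^{i}$, using the ordered subbasis $(F_{\alpha_{1}},\dotsc,F_{\alpha_{s}})$ on the constant slice together with its higher-slice shifts $F_{\alpha_{k}} \otimes z^{\beta}$ for $1 \leq \beta \leq p - 1$, ordered lexicographically by $(\beta,k)$. The only real care needed is bookkeeping: verifying that the combinatorial conventions ``$\bm{\beta}$ nondecreasing with secondary lex order on $\bm{k}$'' match the chosen total order on the basis. I do not anticipate any substantive obstacle, as the statement is essentially a repackaging of PBW for a finite-dimensional Lie algebra of dimension $p \cdot \dim \mf{g}$ (respectively $p \cdot \dim \mf{n}^{-}$).
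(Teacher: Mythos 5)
Your proposal is correct and takes essentially the same route as the paper, which simply invokes the PBW theorem for the finite-dimensional Lie algebras $\mf{g}_{p}$ and $\mf{n}^{-}_{p}$ without further elaboration. Your careful choice of a total order on the basis (constant slice first, then higher slices lexicographically by $(\beta,k)$) is exactly the bookkeeping needed to match the monomials $X^{\bm{n}} \cdot X_{\bm{k}} z^{\bm{\beta}}$ to PBW monomials.
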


\begin{corollary}[PBW-basis of finite singular modules]~\newline
	\label{cor:pbw_basis_finite_singular_module}
	A $\mb C$-basis of the finite singular module $W \subseteq \wh W$ is given by
	\begin{equation}
		\label{eq:pbw_basis_finite_singular_module}
		\mc B_W \ceqq \Set{ X^{\bm n} \cdot X_{\bm k} z^{\bm \beta} w }_{\bm n,\bm k,\bm \beta} \, ,
	\end{equation}
	where all conditions of Lem.~\ref{lem:pbw_basis_enveloping_algebra_finite} apply.
\end{corollary}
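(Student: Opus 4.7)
The plan is to proceed along the same lines as the proof of Cor.~\ref{cor:pbw_basis_affine_singular_module}, but invoking the finite version of the PBW theorem (Lem.~\ref{lem:pbw_basis_enveloping_algebra_finite}) together with the identification $W \simeq U(\mf{n}^{-}_{p})$ as vector spaces that was recorded at the end of \S~\ref{sec:setup}. First I would observe that $W = U(\mf{g}_{p}) w$ by the very definition of the finite singular module, so the PBW basis of $U(\mf{g}_{p})$ supplied by Lem.~\ref{lem:pbw_basis_enveloping_algebra_finite} (with the stated orderings on $\bm{n}, \bm{k}, \bm{\beta}$ and the bound $\beta_{j} \leq p-1$ from Rem.~\ref{rem:generating_vectors_singular_module}) at once furnishes a $\mb{C}$-linear spanning set of $W$ upon applying each monomial to the cyclic vector $w$.

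To upgrade this spanning set to a basis I would invoke the second assertion of Lem.~\ref{lem:pbw_basis_enveloping_algebra_finite}, which restricts the Cartan--Weyl indices to $X_{i}, X_{k_{j}} \in \mf{n}^{-}$ and produces a PBW basis of $U(\mf{n}^{-}_{p})$. Linear independence of the images of these monomials in $W$ would then follow from the vector-space isomorphism $W \simeq U(\mf{n}^{-}_{p})$, which rests on two inputs: the relations~\eqref{eq:annihilator_cyclic_vector} ensuring $\mf{b}^{+}_{p} w = \mb{C} w$, and the triangular PBW decomposition $U(\mf{g}_{p}) \simeq U(\mf{n}^{-}_{p}) \otimes U(\mf{b}^{+}_{p})$ of vector spaces. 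Together these show that $u \mapsto u \cdot w$ is a bijection from $U(\mf{n}^{-}_{p})$ onto $W$, and pushing forward the basis of $U(\mf{n}^{-}_{p})$ through this bijection yields the claimed basis $\mc{B}_{W}$.

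I do not expect any essential obstacle, since the corollary is a formal consequence of Lem.~\ref{lem:pbw_basis_enveloping_algebra_finite} and of the module-theoretic identification already in place. The only mild subtlety is notational: the clause ``all conditions of Lem.~\ref{lem:pbw_basis_enveloping_algebra_finite} apply'' must be read as also imposing the restriction $X_{i}, X_{k_{j}} \in \mf{n}^{-}$, since otherwise one only obtains a spanning set, in parallel with the weaker formulation of Cor.~\ref{cor:pbw_basis_affine_singular_module}.
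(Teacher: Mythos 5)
Your proof is correct and takes essentially the same route as the paper's: the paper argues directly that the family generates since $W = U(\mf{n}^-_p)w$ and is free since $U(\mf{n}^-_p)$ meets $\Ann_{U(\mf{g}_p)}(w)$ trivially, which is precisely the bijection $u \mapsto uw$ from $U(\mf{n}^-_p)$ onto $W$ that you use. Your first paragraph (spanning via $U(\mf{g}_p)w$) is a mild detour, and your closing remark that the corollary's "conditions of Lem.~\ref{lem:pbw_basis_enveloping_algebra_finite}" must include the restriction $X_i, X_{k_j} \in \mf{n}^-$ correctly pins down the intended reading.
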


\begin{proof}
	The family generates since $W = U (\mf n^-_p ) w$, and using Lem.~\ref{lem:pbw_basis_enveloping_algebra_finite} (the generating part).
	But $U(\mf n_p^-)$ has trivial intersection with the annihilator of $w$, hence the family is free by Lem.~\ref{lem:pbw_basis_enveloping_algebra_finite} (the linear independence part).
\end{proof}

In particular $W$ is a free rank-1 left $U(\mf n^-_p)$-module.

\subsection{Gradings for z-degree}
\label{sec:gradings}

We first define two positive $\mb Z$-gradings on $\wh W$.

\begin{definition}
	\label{def:z_grading_affine_singular_module}
	Choose $k \in \mb Z$. Then:
	\begin{itemize}
		\item the subspace $\wh{\mc F}^-_k = \wh{\mc F}^-_k\bigl( \wh W \bigr) \subseteq \wh W$ is the $\mb C$-span of the vectors of~\eqref{eq:pbw_basis_affine_singular_module} with $\sum_i \beta'_i = k$;

		\item the subspace $\wh{\mc F}^+_k = \wh{\mc F}^+_k \bigl(\wh W\bigr) \subseteq \wh W$ is the $\mb C$-span of the vectors of~\eqref{eq:pbw_basis_affine_singular_module} with $\sum_i \beta_i = k$.
	\end{itemize}
\end{definition}

By definition $\wh{\mc F}^-_0 = W$, $\wh{\mc F}^+_0 = \wh W^-$, and
\begin{equation}
	\label{eq:negative_degree_grading_shift}
	\mf g \otimes z^{-i} \bigl( \wh{\mc F}^-_k \bigr) = \wh{\mc F}^-_{k + i} \, , \qquad \text{for } i \geq 0 \, .
\end{equation}
In particular $\bigl(\wh W,\wh{\mc F}^-_{\bullet} \bigr)$ is a $\mb Z$-graded $\mf g \bigl[ z^{-1} \bigr]$-module, where $\mf g \bigl[ z^{-1} \bigr]$ is a $\mb Z$-graded Lie algebra with grading defined by $\deg (\mf g \otimes z^{-i}) = i$.

The other grading instead does not yield a graded module;
but we can obtain one inducing a (positive) grading on $W \subseteq \wh W$.

\begin{definition}
	\label{def:z_grading_finite_singular_module}
	For $k \in \mb Z$ set $\mc F^+_k \ceqq \wh{\mc F}^+_k \cap W$.
\end{definition}

It follows that $\mc F^+_0 = U(\mf g) w \subseteq W$, and
\begin{equation}
	\label{eq:positive_degree_grading_shift}
	\mf n^- \otimes z^i \bigl( \mc F^+_k \bigr) \subseteq \mc F^+_{k+i} \, , \qquad \text{for } k,i \geq 0 \, ,
\end{equation}
so the space $\bigl(W,\mc F^+_{\bullet} \bigr)$ is a $\mb Z$-graded $\mf n^- \llbracket z \rrbracket$-module, where $\mf n^- \bigl \llbracket z \rrbracket$ is a $\mb Z$-graded Lie algebra with grading defined by $\deg (\mf n^- \otimes z^i) = i$.

\subsection{Filtrations}
\label{sec:filtrations}

We consider the filtration $\wh{\mc F}^-_{\leq \bullet}$ on $\wh W$ associated with the grading of Def.~\ref{def:z_grading_affine_singular_module} for the negative $z$-degree.
It follows from~\eqref{eq:negative_degree_grading_shift} that
\begin{equation}
	\label{eq:negative_degree_filtration_shift}
	\wh{\mc F}^-_{\leq k + 1} = \sum_{m + l = k} \mf g \otimes z^{-m-1} \bigl( \wh{\mc F}^-_l \bigr) \, , \qquad \mf g \otimes z^i \bigl( \wh{\mc F}^-_{\leq k} \bigr) \subseteq \wh{\mc F}^-_{\leq k} \, ,
\end{equation}
for $k,i \geq 0$.

Finally we consider on $U (\mf g) w = U (\mf n^-) w \subseteq W$ the natural filtration $\mc{E}_{\leq \bullet}$ induced from that of $U (\mf n^-)$, so that $\mc{E}_{\leq 0} = \mb C w$.
Note
\begin{equation}
	\label{eq:constant_filtration_shift}
	\mf n^- \bigl( \mc{E}_{\leq k} \bigr) + \mc{E}_{\leq k} = \mc{E}_{\leq k+1} \, ,
\end{equation}
and further $\mf n^-$ acts nontrivially on the associated graded of $\bigl( U(\mf g)w, \mc{E}_{\leq \bullet} \bigr)$:
\begin{equation}
	\label{eq:constant_grading_shift}
	\mf n^- \bigl( \gr(\mc{E})_k \bigr) \subseteq \gr(\mc{E})_{k+1} \, ,
\end{equation}
where as customary $\gr(\mc{E})_k \ceqq \mc{E}_{\leq k} \big\slash \mc{E}_{\leq k-1}$ for $k \in \mb Z_{\geq 0}$---and $\mc{E}_{\leq -1} \ceqq (0)$.

\subsection{Weight gradings}
\label{sec:weight_gradings}

For $\mu \in \mf h^{\dual}$ define
\begin{equation}
	\wh{\mc F}_{\mu} \bigl( \wh W \bigr) = \wh{\mc F}_{\mu} \ceqq \Set{ \wh w \in \wh W | H \wh w = \mu (H) \wh w \text{ for } H \in \mf h } \subseteq \wh W \, ,
\end{equation}
and analogously $\mc F_{\mu}(W) = \mc F_{\mu} \ceqq W \cap \wh{\mc F}_{\mu} \subseteq W$.

\begin{proposition}
	\label{prop:diagonalisable_singular_modules}
	The singular modules are $\mf h$-semisimple, i.e.
	\begin{equation}
		\wh W = \bigoplus_{\mu \in \mf h^{\dual}} \wh{\mc F}_{\mu} \, , \qquad W = \bigoplus_{\mu \in \mf h^{\dual}} \mc F_{\mu} \, .
	\end{equation}
\end{proposition}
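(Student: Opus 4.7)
The strategy is to show that the PBW basis vectors constructed in Cor.~\ref{cor:pbw_basis_affine_singular_module} and Cor.~\ref{cor:pbw_basis_finite_singular_module} are simultaneous $\mf{h}$-eigenvectors, and then invoke the linear independence of weight vectors for distinct characters of the abelian Lie algebra $\mf{h}$.

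First I would observe that the cyclic vector $w \in \wh{W}$ is an $\mf{h}$-weight vector of weight $\lambda \in \mf{h}^{\vee}$, which is immediate from~\eqref{eq:annihilator_cyclic_vector} applied to $i = 0$, since $H w = \langle a_{0},H \rangle w = \lambda(H) w$ for $H \in \mf{h}$. Next, for any root $\alpha \in \mc{R}$ and any $i \in \mb{Z}$, the element $X z^{i} \in \wh{\mf{g}}$ with $X \in \mf{g}_{\alpha}$ satisfies $\bigl[ H, X z^{i} \bigr] = \alpha(H) \cdot X z^{i}$, because the affine cocycle~\eqref{eq:cocycle_affine_lie_algebra} vanishes on $\mf{h} \otimes 1$ paired with $\mf{g} \otimes z^{i}$ (as $\Res_{z = 0}(z^{i} \dif 1) = 0$). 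Similarly $\bigl[ H, H' z^{i} \bigr] = 0$ for $H' \in \mf{h}$. Hence each factor appearing in a PBW monomial~\eqref{eq:pbw_basis_affine_singular_module} is an $\ad_{\mf{h}}$-eigenvector with weight the associated root (or zero on Cartan factors).

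Second, a straightforward induction on the length of a PBW monomial $\wh{w} = X_{\bm{k}'} z^{-\bm{\beta}'} \cdot X^{\bm{n}} \cdot X_{\bm{k}} z^{\bm{\beta}} w$ shows that $\wh{w} \in \wh{\mc{F}}_{\mu}$, where $\mu \in \mf{h}^{\vee}$ is obtained from $\lambda$ by adding the roots (with multiplicity) of all factors appearing to the left of $w$; the inductive step uses the identity $H \cdot (Y \wh{v}) = Y H \wh{v} + [H,Y] \wh{v}$ for $Y \in \wh{\mf{g}}$ and $\wh{v}$ a weight vector. Consequently every element of the PBW basis~\eqref{eq:pbw_basis_affine_singular_module} belongs to some $\wh{\mc{F}}_{\mu}$, and analogously every element of~\eqref{eq:pbw_basis_finite_singular_module} belongs to some $\mc{F}_{\mu}$.

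Finally I would invoke the standard fact that eigenvectors of a commuting family of diagonalisable operators associated to distinct systems of eigenvalues are linearly independent, so the sum $\sum_{\mu \in \mf{h}^{\vee}} \wh{\mc{F}}_{\mu} \subseteq \wh{W}$ is direct; since it contains the PBW basis~\eqref{eq:pbw_basis_affine_singular_module}, it equals $\wh{W}$. The decomposition of $W$ follows in the same way from~\eqref{eq:pbw_basis_finite_singular_module}, or alternatively by intersecting with $W \subseteq \wh{W}$ and noting $\mc{F}_{\mu} = W \cap \wh{\mc{F}}_{\mu}$ by definition. There is no real obstacle beyond bookkeeping; the only care needed is checking that the central extension does not disturb the weight computation, which is why the vanishing of the cocycle on Cartan factors is essential.
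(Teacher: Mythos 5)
Your argument is correct and essentially identical to the paper's: both show recursively that the PBW basis vectors of~\eqref{eq:pbw_basis_affine_singular_module} and~\eqref{eq:pbw_basis_finite_singular_module} are $\mf{h}$-weight vectors, using the adjoint identities $\bigl[H, X_{\alpha}z^{i}\bigr] = \alpha(H) X_{\alpha}z^{i}$ and $\bigl[H, H'z^{i}\bigr] = 0$ (your added remark that the central extension causes no trouble because the cocycle vanishes on $\mf{h}\otimes 1$ against any $\mf{g}\otimes z^{i}$ is a useful clarification, though it is implicit in the paper's equation~\eqref{eq:weight_grading_shift_I}). The final step — linear independence of weight vectors with distinct weights — is also the same.
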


\begin{proof}
	This follows from the fact that all elements of~\eqref{eq:pbw_basis_affine_singular_module} and~\eqref{eq:pbw_basis_finite_singular_module} are $\mf h$-weight vectors, which in turn is proven recursively using the identities
	\begin{equation}
		\label{eq:weight_grading_shift_I}
		H \cdot X_{\alpha} z^i \wh w
		= \Braket{ \mu + \alpha | H } X_{\alpha} z^i \wh w \, , \qquad H \cdot H' z^i \wh w
		= \Braket{ \mu | H } \cdot H' z^i \wh w \, ,
	\end{equation}
	for $\alpha \in \mc R$, $H,H' \in \mf h$, $i \in \mb Z$ and $\wh w \in \wh{\mc F}_{\mu}$.
\end{proof}

\begin{remark*}
	In the finite case one may define the $\mf h_p$-weight spaces, i.e. the subspaces of vectors $\wh w \in W$ such that $H z^i \wh w = \Braket{ \mu_i | H z_i } \wh w$ for $\bm{\mu} = (\mu_0,\dc,\mu_{p-1}) \in \mf h_p^{\dual}$.
	However the very first recursion fails for $p \geq 2$: if $H \in \mf h$ is such  that $\Braket{ \alpha | H } \neq 0$ then
	\begin{equation}
		Hz \cdot X_{-\alpha} w = \Braket{ a_1 | Hz } X_{-\alpha} w - \Braket{ \alpha | H } X_{-\alpha} z \cdot w \not\in \mb C ( X_{-\alpha} w) \, ,
	\end{equation}
	where $w$ is the cyclic vector, so the finite singular modules are not $\mf h_p$-semisimple.
\end{remark*}

The proof of Prop.~\ref{prop:diagonalisable_singular_modules} implies that the weights are contained inside $\lambda + Q \subseteq \mf h^{\dual}$, where $Q \ceqq \mb Z \mc R$ is the root lattice.

\begin{remark*}
	Consider the $z$-linear extension of the adjoint action $\mf h \to \mf{gl}(\mf g)$ on $\mc L\mf g$.
	Decomposing $\mc L\mf g = \bigoplus_{\alpha \in \mc R} \mc L\mf g_{\alpha} \oplus \mc L \mf h$ we see that $\mc L\mf g$ is naturally a $\mf h^{\dual}$-graded Lie algebra (with nontrivial weights still given by $\mc R \cup \Set{0}$), and the proof of Prop.~\ref{prop:diagonalisable_singular_modules} shows that the singular modules are $\mf h^{\dual}$-graded.
\end{remark*}

In the finite case one can go further, recovering the standard notion of positivity.
Namely $\bigl( \mf h^{\dual},\preceq \bigr)$ is a poset, by letting $\mu' \preceq \mu$ if $\mu - \mu' \in Q_+$, where $Q_+ \ceqq \mb Z_{\geq 0} \mc R^+ \subseteq Q$ is the \emph{positive part} of the root lattice.

\begin{lemma}
	\label{lem:highest_weight}
	One has $\mc F_{\lambda} = \mb C w$ and $W = \bigoplus_{\mu \preceq \lambda} \mc F_{\mu}$.
\end{lemma}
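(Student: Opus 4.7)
\smallskip

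\noindent\textbf{Proof plan.} The plan is to read off both statements directly from the PBW-basis $\mc{B}_{W}$ of Cor.~\ref{cor:pbw_basis_finite_singular_module} by tracking $\mf{h}$-weights, using the identities~\eqref{eq:weight_grading_shift_I} from the proof of Prop.~\ref{prop:diagonalisable_singular_modules}.

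First I would recall that $W = U(\mf{n}^{-}_{p}) w$, so when applied to our setting the basis $\mc{B}_{W}$ may be presented as $\Set{X^{\bm{n}} X_{\bm{k}} z^{\bm{\beta}} w}$ with each $X_{i}$ and each $X_{k_{j}}$ drawn from $\Set{F_{\alpha_{1}},\dotsc,F_{\alpha_{s}}}$, i.e.\ the negative-root part of the Cartan--Weyl basis~\eqref{eq:cartan_weyl_basis}. Since $w \in \mc{F}_{\lambda}$ by the definition of $\chi$, an induction on the total length of such a monomial, based on~\eqref{eq:weight_grading_shift_I} applied to $X_{-\alpha} \in \mf{g}_{-\alpha}$, shows that each basis vector $X^{\bm{n}} X_{\bm{k}} z^{\bm{\beta}} w$ lies in $\mc{F}_{\mu}$ for
\begin{equation}
    \mu = \lambda - \Biggl( \sum_{i} n_{i} \alpha_{i} + \sum_{j} \alpha_{k_{j}} \Biggr) \in \lambda - Q^{+} \, ,
\end{equation}
so in particular $\mu \preceq \lambda$.

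Then I would assemble the two conclusions. Prop.~\ref{prop:diagonalisable_singular_modules} gives $W = \bigoplus_{\mu \in \mf{h}^{\vee}} \mc{F}_{\mu}$, and the computation above shows that only indices $\mu \preceq \lambda$ contribute, proving the decomposition $W = \bigoplus_{\mu \preceq \lambda} \mc{F}_{\mu}$. For the first equality, note that the only element of $\mc{B}_{W}$ whose weight equals $\lambda$ (equivalently, for which $\sum_{i} n_{i}\alpha_{i} + \sum_{j}\alpha_{k_{j}} = 0$ in $Q^{+}$) is the empty monomial $w$ itself, because the positive roots are linearly independent over $\mb{Q}_{\geq 0}$. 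Since $\mc{B}_{W}$ is a $\mb{C}$-basis of $W$ and weight vectors decompose uniquely along it, any vector in $\mc{F}_{\lambda}$ is forced to be a scalar multiple of $w$, whence $\mc{F}_{\lambda} = \mb{C} w$.

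There is no real obstacle here: the statement is a routine consequence of the PBW description together with the weight-shift rule. The only point requiring a little care is making sure the inductive step correctly tracks that each negative-root factor contributes a strictly positive element of $Q^{+}$ to the deficit $\lambda - \mu$, so that different PBW monomials land in distinct weight spaces whenever their total root content differs; this is immediate from $\mc{R}^{+} \subseteq Q^{+}$ and the definition of $\preceq$.
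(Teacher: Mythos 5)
Your proof is correct and follows essentially the same route as the paper: the paper's proof compresses the argument to the observation that $W$ is a highest-weight $\mf{g}_{p}$-module (cyclic over $U(\mf{n}^{-}_{p})$ from an $\mf{h}_{p}$-weight vector annihilated by $\mf{n}^{+}_{p}$), and you simply unpack that same observation explicitly via the PBW basis $\mc{B}_{W}$ and the weight-shift identities~\eqref{eq:weight_grading_shift_I}. The only cosmetic quibble is the phrase ``positive roots are linearly independent over $\mb{Q}_{\geq 0}$''—what you actually use (correctly) is that $0$ has no nontrivial expression as a $\mb{Z}_{\geq 0}$-combination of positive roots, which follows from linear independence of the \emph{simple} roots.
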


\begin{proof}
	It follows from the fact that $W$ is generated over $U( \mf n^-_p )$ by a $\mf h_p$-weight vector annihilated by $\mf n^+_p$: it is a highest-weight $\mf g_p$-module.
\end{proof}

In particular~\eqref{eq:pbw_basis_finite_singular_module} consists of weight vectors, and the line $\mb C w \subseteq W$ has the highest weight.

In view of Lem.~\ref{lem:highest_weight} the weight spaces are naturally parametrised by elements $\nu \in Q_+$, via $\mc F_{\nu} \ceqq \mc F_{\lambda - \nu}$.
Now for an element $\nu \in \mf h^{\dual}$ denote
\begin{equation}
	\label{eq:multiplicity_positive_root_lattice}
	\Mult_{\mc R^+}(\nu) \ceqq \Set{ \bm m = (m_{\alpha})_{\alpha} \in \mb Z_{\geq 0}^{\mc R^+} | \sum_{\alpha \in \mc R^+} m_{\alpha} \cdot \alpha = \nu } \subseteq \mb Z_{\geq 0}^{\mc R^+} \, ,
\end{equation}
so that the cardinality of $\Mult_{\mc R^+}(\nu)$ is the \emph{finite} number of ways of expressing $\nu$ as a $\mb Z_{\geq 0}$-linear combination of positive roots. In particular $\Mult_{\mc R^+}(0) = \Set{ \bm{0} }$, and $\Mult_{\mc R^+}(\nu) = \varnothing$ for $\nu \not\in Q_+$.

Finally for $\bm m \in \mb Z_{\geq 0}^{\mc R^+}$ denote
\begin{equation}
	\WComp_p (\bm m) \ceqq \Set{ \bm \varphi = (\varphi_{\alpha})_{\alpha} | \varphi_{\alpha} \colon \Set{0,\dc, p-1} \to \mb Z_{\geq 0} \, , \sum_{i = 0}^{p-1} \varphi_{\alpha}(i) = m_{\alpha} } \, ,
\end{equation}
which is the \emph{finite} set of weak $p$-compositions of the integers $m_{\alpha} \geq 0$.\footnote{A \emph{composition} of $m_{\alpha}$ is a sequence of positive integers summing to $m_{\alpha}$; it is a $p$-composition if the sequence has finite length $p \geq 1$; and it is \emph{weak} if zero is allowed.}
In particular $\WComp_1 (\bm m)$ is a singleton containing the element $\bm \varphi$ with $\varphi_{\alpha}(0) = m_{\alpha}$ for all $\alpha \in \mc R^+$.

\begin{proposition}
	\label{prop:finite_dimensional_weight_spaces}
	For $\nu \in \mf h^{\dual}$ one has
	\begin{equation}
		\label{eq:dimension_weight_space}
		\dim \bigl( \mc F_{\nu} \bigr) = \sum_{\bm m \in \Mult_{\mc R^+}(\nu)} \binom{\bm m+p-1}{\bm m} < \infty \, ,
	\end{equation}
	where $\binom{\bm m+p-1}{\bm m} \ceqq \prod_{\alpha \in \mc R^+} \binom{m_{\alpha}+p-1}{m_{\alpha}}$.
\end{proposition}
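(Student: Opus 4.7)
The plan is to count PBW-basis elements of $W$ in each $\mf{h}$-weight space. By Cor.~\ref{cor:pbw_basis_finite_singular_module}, the finite singular module $W$ is freely generated over $U(\mf{n}_p^-)$ by the cyclic vector $w$, so that $W \simeq U(\mf{n}_p^-)$ as vector spaces via $u \mapsto uw$, and a PBW-basis of $W$ consists of ordered monomials in the generators $F_\alpha z^i$ of $\mf{n}_p^-$, where $\alpha \in \mc{R}^+$ and $i \in \{0,\dotsc,p-1\}$, applied to $w$. Such a monomial is uniquely specified by its exponent tuple $\bm{c} = (c_{\alpha,i})_{\alpha,i} \in \mb{Z}_{\geq 0}^{\mc{R}^+ \times \{0,\dotsc,p-1\}}$ (after fixing a total order on the index set $\mc{R}^+ \times \{0,\dotsc,p-1\}$).

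Next I would check that each such PBW monomial $u_{\bm{c}} w$ is an $\mf{h}$-weight vector. Indeed, since $[H, F_\alpha z^i] = -\alpha(H)\, F_\alpha z^i$ for $H \in \mf{h}$, the constant part $\mf{h} \subseteq \mf{g} \subseteq \mf{g}_p$ acts on $u_{\bm{c}}$ via the adjoint representation, and applying the identity inductively to $u_{\bm{c}} w \in \mc{F}_\lambda$ (Lem.~\ref{lem:highest_weight}) shows
\begin{equation}
    H \cdot u_{\bm{c}} w = \Bigl\langle \lambda - \sum_{\alpha,i} c_{\alpha,i}\, \alpha,\, H \Bigr\rangle u_{\bm{c}} w \, ,
\end{equation}
so $u_{\bm{c}} w$ has weight $\lambda - \sum_{\alpha,i} c_{\alpha,i}\,\alpha$. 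By Prop.~\ref{prop:diagonalisable_singular_modules} the weight spaces $\mc{F}_\mu$ are spanned by the PBW monomials falling in them, so $\dim \mc{F}_{\lambda - \nu}$ equals the number of exponent tuples $\bm{c}$ satisfying $\sum_{\alpha,i} c_{\alpha,i}\,\alpha = \nu$.

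Finally, grouping such tuples $\bm{c}$ by the row sums $m_\alpha \coloneqq \sum_{i=0}^{p-1} c_{\alpha,i}$ gives a partition of the set of valid $\bm{c}$: the constraint $\sum_\alpha m_\alpha\,\alpha = \nu$ means $\bm{m} \in \Mult_{\mc{R}^+}(\nu)$, and for each fixed $\bm{m}$ the choices of $(c_{\alpha,0},\dotsc,c_{\alpha,p-1})$ summing to $m_\alpha$ are in bijection with weak $p$-compositions of $m_\alpha$, of which there are $\binom{m_\alpha + p - 1}{m_\alpha}$ by stars-and-bars. Multiplying across $\alpha \in \mc{R}^+$ and summing over $\bm{m}$ yields~\eqref{eq:dimension_weight_space}. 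Finiteness then follows since $\Mult_{\mc{R}^+}(\nu)$ is a finite set (it is empty unless $\nu \in Q_+$, in which case its cardinality is the Kostant partition-number of $\nu$) and each binomial coefficient is finite.

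The only mildly delicate point is organising the bookkeeping so that the "finite vs.\ affine" and "$F_\alpha z^i$ vs.\ other generators" distinctions are clean: one must insist on restricting to $\mf{n}_p^-$-monomials, as allowed by Cor.~\ref{cor:pbw_basis_finite_singular_module}, rather than using all of~\eqref{eq:pbw_basis_finite_singular_module}, since elements involving $\mf{h}$ or $\mf{n}^+$-generators would redundantly land back in $\mb{C}w$ or vanish by~\eqref{eq:annihilator_cyclic_vector}; everything else is a direct combinatorial count.
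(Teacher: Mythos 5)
Your proof is correct and follows essentially the same route as the paper's: both identify the weight space with the span of the PBW monomials $\prod (X_{-\alpha}z^{i})^{c_{\alpha,i}}w$ from Cor.~\ref{cor:pbw_basis_finite_singular_module} landing in that weight, then count exponent tuples by grouping according to the row sums $m_{\alpha}=\sum_{i}c_{\alpha,i}$ and applying stars-and-bars for each $m_{\alpha}$. The paper packages the exponent data as elements of $\coprod_{\bm{m}}\WComp_{p}(\bm{m})$ rather than as tuples $\bm{c}$, but the bijection and the count are identical.
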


\begin{proof}
	Choose $\mu \in \mf h^{\dual}$ and set $\nu = \lambda - \mu$.
	Then for $\bm m \in \Mult_{\mc R^+}(\nu)$ and $\bm \varphi \in \WComp_p (\bm m)$ consider the vector
	\begin{equation}
		\label{eq:basis_vector}
		w^{\bm \varphi} \ceqq \prod_{i = 0}^{p-1} \Biggl( \, \prod_{\alpha \in \mc R^+} \bigl( X_{-\alpha} z^i \bigr)^{\varphi_{\alpha}(i)} w \Biggr) \in \mc B_W \, .
	\end{equation}
	The family $\Set{ w^{\bm \varphi} }_{\bm \varphi} \subseteq W$ is free since it consists of distinct elements extracted from~\eqref{eq:pbw_basis_finite_singular_module} (beware of the ordering in the product), and by construction $w^{\bm \varphi} \in \mc F_{\nu}$.

	Conversely the vectors~\eqref{eq:basis_vector} exhaust~\eqref{eq:pbw_basis_finite_singular_module}, from which one can extract a basis of $\mc F_{\nu}$, so the conclusion follows from standard combinatorial identities.
\end{proof}

Thus Prop.~\ref{prop:finite_dimensional_weight_spaces} strengthen Lem.~\ref{lem:highest_weight}: the given sum is empty for $\nu \not\in Q_+$, and $\WComp_p (\bm{0})$ is a singleton containing the element $\bm \varphi$ with $\varphi_{\alpha}(i) = 0$ for $i \in \Set{0,\dc,p-1}$.

As expected~\eqref{eq:dimension_weight_space} generalises the standard fact that $\dim \bigl( \mc F_{\nu} \bigr) = \abs{\Mult_{\mc R^+}(\nu)}$ for Verma modules, i.e. it generalises the character of Verma modules.
The difference in the general case is that one must also specify a $z$-degree for each occurrence of a positive root.

\begin{remark*}
	This notion of positivity is lost with the (finite) modules of \S~\ref{sec:dynamical_term}: in particular they have \emph{infinite}-dimensional weight spaces.
\end{remark*}

For example consider the case where $\nu = \theta \in \Pi$ is a simple root.
One has $\Mult_{\mc R^+}(\theta) = \Set{ \bm m^{\theta} }$, with $\bm m^{\theta}_{\alpha} \ceqq \delta_{\theta,\alpha}$.
Also $\WComp_p (\bm m^{\theta}) = \Set{ \bm \varphi^{\theta,i}}_i$, where
\begin{equation*}
	\varphi^{\theta,i}_{\alpha} (j) = \delta_{\alpha,\theta}\delta_{ij} \, , \qquad  i,j \in \Set{ 0,\dc,p-1} \, .
\end{equation*}
Hence $X_{\bm \varphi^{\theta,i}} = X_{-\theta} z^i$, so we recover
\begin{equation*}
	\dim \bigl( \mc F_{\theta} \bigr) = p \, , \qquad \mc F_{\theta} = \spann_{\mb C} \Set{ X_{-\theta} w, \dc, X_{-\theta} z^{p-1} w } \, .
\end{equation*}

\begin{remark*}
	It follows from the above that
	\begin{equation}
		U \bigl( \mf n^+ \llbracket z \rrbracket \bigr) \mc F_{\nu} = \bigoplus_{0 \preceq \nu' \preceq \nu} \mc F_{\nu'} \, , \qquad \text{for } \nu \in Q_+ \, .
	\end{equation}
	Hence the module $W$ is \emph{locally} $\mf n^+\llbracket z \rrbracket$-finite, i.e. the vector spaces $U \bigl( \mf n^+ \llbracket z \rrbracket \bigr) \wh w \subseteq W$ are finite-dimensional for all $\wh w \in W$.

	One thus expects that $W$ lies in a `Bernstein--Gelfand--Gelfand category $\mc O \llbracket z \rrbracket$'~\cite{humphreys_2008_representations_of_semisimple_lie_algebras_in_the_BGG_category_O}---of $\mf h$-semisimple finitely generated left $U \bigl( \mf g \llbracket z \rrbracket)$-modules which are locally $\mf n^+\llbracket z \rrbracket$-finite.
	And indeed:~\cite{chaffe_2023_category_o_for_takiff_lie_algebras,chaffe_topley_2023_category_o_for_truncated_current_lie_algebras}.
\end{remark*}

\subsubsection{Archetypal case}
\label{sec:archetypal_case}

It is easy to give a closed expression for the dimension when $\mf g = \mf{sl}(2,\mb C)$, with the standard basis $(F,H,E)$ and the standard $A_1$-root system $\mc R = \Set{ \pm \alpha }$---i.e. $\alpha$ is positive and $\Braket{ \alpha | H } = 2$.
Here $Q_+ = \mb Z_{\geq 0} \alpha$, so simply $\Mult_{\mc R^+} (\nu) = \Set{ m }$ for elements $\nu = m\alpha$ with $m \in \mb Z_{\geq 0}$.

Thus~\eqref{eq:dimension_weight_space} reduces to
\begin{equation}
	\label{eq:dimension_weight_space_sl_2}
	\dim \bigl( \mc F_{m\alpha} \bigr) = \abs{ \WComp_p(m)} = \binom{m+p-1}{m}.
\end{equation}
In the tame case one recovers the line generated by $F^m v$, whereas in the general case a basis is given by
\begin{equation}
	\label{eq:basis_weight_space_sl_2}
	w^{\varphi} = \prod_{i = 0}^{p-1} \bigl( F z^i \bigr)^{\varphi(i)} \cdot v \, , \qquad \text{for } \varphi \in \WComp_p (m) \, .
\end{equation}

\section{Dual modules}
\label{sec:dual_modules}

In view of Prop.~\ref{prop:diagonalisable_singular_modules} we consider the restricted duals of the $\mf h^{\dual}$-graded singular modules, i.e. the $\mf h^{\dual}$-graded vector spaces
\begin{equation}
	\label{eq:restricted_duals}
	\wh W^* \ceqq \bigoplus_{\mu \in \mf h^{\dual}} \wh{\mc F}_{\mu}^{\dual} \subseteq \wh W^{\dual} \, , \qquad W^* \ceqq \bigoplus_{\mu \in \mf h^{\dual}} \mc F_{\mu}^{\dual} \subseteq W^{\dual} \, .
\end{equation}
They are naturally equipped with a \emph{right} $U \bigl( \wh{\mf g} \bigr)$- and $U \bigl( \mf g_p \bigr)$-module structure (respectively), namely
\begin{equation}
	\Braket{\wh \psi X z^i | \wh w} = \Braket{ \wh \psi | Xz^i \wh w } \, , \quad \wh \psi K = \kappa \wh \psi \, , \qquad\text{for } i \in \mb Z, \, X \in \mf g, \, \wh \psi \in \wh W^*, \, \wh w \in \wh W \, ,
\end{equation}
and analogously in the finite case.

To get a left action, let us compose with a Lie-algebra morphism $\wh{\mf g} \to \wh{\mf g}^{\op}$ (resp. $\mf g_p \to \mf g_p^{\op})$, or rather with the induced ring morphism $U \bigl( \wh{\mf g} \bigr) \to U \bigl( \wh{\mf g}^{\op} \bigr) = U \bigl( \wh{\mf g} \bigr)^{\op}$ (resp. $U (\mf g_p) \to U(\mf g_p)^{\op}$).
In particular a Lie-algebra morphism $\theta \colon \mf g \to \mf g^{\op}$ has a unique $\mb Z$-graded extension $\wh{\theta} \colon \mc L \mf g \to \mc L \bigl( \mf g^{\op} \bigr) = \bigl(\mc L\mf g\bigr)^{\op}$: in the finite case one can then consider the restriction $\wh{\theta} \colon \mf g \llbracket z \rrbracket \to \mf g \llbracket z \rrbracket^{\op}$, which is compatible with the projections $\mf g \llbracket z \rrbracket \twoheadrightarrow \mf g_p$ and $\mf g \llbracket z \rrbracket^{\op} \twoheadrightarrow \mf g_p^{\op}$; in the affine case one may further ask that $\theta$ is $(\cdot \mid \cdot)$-orthogonal, and extend the definition by $\wh{\theta}(K) \ceqq -K$.

In what follows we only consider morphisms of this type.

\begin{definition}[Dual singular modules]~\newline
	The affine (resp. finite) $\theta$-dual singular module $\wh W_{\theta}^*$ (resp. $W_{\theta}^*$) is the left $U \bigl( \wh{\mf g} \bigr)$-module (resp. $U(\mf g_p)$-module) defined by the morphism $\theta \colon \mf g \to \mf g^{\op}$.
\end{definition}

The $U \bigl( \mf g \llbracket z \rrbracket)$-linear inclusion map $W \hookrightarrow \wh W$ then dually corresponds to a $U \bigl( \mf g \llbracket z \rrbracket)$-linear restriction map $\wh W_{\theta}^* \twoheadrightarrow W_{\theta}^*$.

\begin{remark}[Dual/contragredient modules]
	Basic examples of morphisms $\theta \colon \mf g \to \mf g^{\op}$ preserving $( \cdot \mid \cdot)$ are the tautological $\theta_0 = -\on{Id}_{\mf g}$, and the \emph{transposition} $\theta_1$, defined by
	\begin{equation}
		\theta_1 (E_{\alpha}) = E_{-\alpha} \, , \quad \eval[1]{\theta_1}_{\mf h} = \on{Id}_{\mf h} \, , \qquad \text{for } \alpha \in \mc R \, .
	\end{equation}
	We refer to $\theta_0$-duals simply as \emph{dual} modules, and to $\theta_1$-duals as \emph{contragredient} modules.
\end{remark}

Consider then the element $\psi \in W^*$ dual to the cyclic vector in the basis~\eqref{eq:pbw_basis_finite_singular_module}, i.e. $\Braket{ \psi | w } = 1$ and $\psi$ vanishes on all other vectors of~\eqref{eq:pbw_basis_finite_singular_module}---whence  $\mc F_{\lambda}^{\dual} = \mb C \psi$.

Assume hereafter that $\theta(\mf h) = \mf h^{\op}$ (up to conjugating $\theta$ by an inner automorphism of $\mf g$), and canonically identify $\mf h \simeq \mf h^{\op}$ and their duals.
Then we have a well-defined pullback map $\theta^* \in \GL(\mf h^{\dual})$, which we extend $z$-linearly to $\bigl( \mf h \otimes z^i \bigr)^{\dual} \simeq \mf h^{\dual} \otimes z^i$.
Moreover by orthogonality the subspace $\mf n^+ \oplus \mf n^- \subseteq \mf g$ is $\theta$-stable.

\begin{lemma}
	\label{lem:annihilator_cyclic_vector_dual_contragredient}
	The vector $\psi \in W^*$ satifies the relations
	\begin{equation}
		\label{eq:annihilator_cyclic_vector_theta_dual}
		\begin{split}
			z^p \mf g \llbracket z \rrbracket \psi              & = (0) = \theta^{-1} (\mf n^-) \llbracket z \rrbracket \psi \, , \\
			Hz^i \psi = \Braket{ \theta^* a_i | H z^i } \psi \, & , \qquad \text{for } H \in \mf h, \, i \in \Set{0,\dc,p-1} \, .
		\end{split}
	\end{equation}
\end{lemma}

\begin{proof}
	Use ~\eqref{eq:annihilator_cyclic_vector},~\eqref{eq:positive_degree_grading_shift}, the identity $z^p \mf g \llbracket z \rrbracket W = (0)$, and the fact that $\wh{\theta} \colon \mf g \llbracket z \rrbracket \to \mf g \llbracket z \rrbracket^{\op}$ preserves the $z$-grading of Def.~\ref{def:z_grading_finite_singular_module}.
\end{proof}

In particular $\mf n^- \llbracket z \rrbracket \psi = (0)$ in the dual case, and $\mf n^+ \llbracket z \rrbracket \psi = (0)$ in the contragredient case.

\subsection{Dual weight grading}

Write now $\theta_{*} \ceqq \bigl( \theta^* \bigr)^{-1} = \bigl( \theta^{-1} \bigr)^*$, and introduce the notation $\wh{\mc F}^*_{\mu} \subseteq \wh W^*_{\theta}$ and $\mc F^*_{\mu} \subseteq W^*_{\theta}$ for the $\mf h$-weight spaces.

\begin{lemma}
	\label{lem:weight_spaces_dual}
	One has $\wh{\mc F}_{\mu}^{\dual} = \wh{\mc F}^*_{\theta^*\mu}$ and $E_{\alpha} z^i \wh{\mc F}_{\mu}^{\dual} \subseteq \wh{\mc F}_{\mu + \theta_{*}\alpha}^{\dual}$, for $\mu \in \mf h^{\dual}$, $\alpha \in \mc R$, $i \in \mb Z$, and analogously in the finite case---restricting to $i \in \mb Z_{\geq 0}$.
\end{lemma}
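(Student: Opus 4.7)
The plan is to verify both claims by a direct computation of how the $\theta$-twisted left action interacts with the $\mf{h}$-weight grading, exploiting the hypothesis $\theta(\mf{h}) = \mf{h}$ to keep Cartan elements inside the Cartan.

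First I would prove the equality $\wh{\mc{F}}_{\mu}^{\vee} = \wh{\mc{F}}^{*}_{\theta^{*}\mu}$. Pick $\psi \in \wh{\mc{F}}_{\mu}^{\vee}$ and $H \in \mf{h}$; by definition of the twisted left action one has $(H \cdot \psi)(\wh{w}) = \langle \psi, \wh{\theta}(H) \wh{w} \rangle = \langle \psi, \theta(H) \wh{w} \rangle$ for any $\wh{w} \in \wh{W}$. Decomposing $\wh{w}$ along the weight grading of Prop.~\ref{prop:diagonalisable_singular_modules} and observing that $\theta(H) \in \mf{h}$ preserves each weight space, only the $\mu$-component contributes to the pairing, giving $(H \cdot \psi)(\wh{w}) = \langle \mu, \theta(H) \rangle \langle \psi, \wh{w} \rangle = \langle \theta^{*}\mu, H \rangle \langle \psi, \wh{w} \rangle$. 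Hence $\wh{\mc{F}}_{\mu}^{\vee} \subseteq \wh{\mc{F}}^{*}_{\theta^{*}\mu}$. The reverse inclusion comes for free: by the definition of the restricted dual one has $\wh{W}^{*} = \bigoplus_{\mu} \wh{\mc{F}}_{\mu}^{\vee}$, and the inclusions just obtained are indexed injectively by $\theta^{*} \in \GL(\mf{h}^{\vee})$, so the two direct-sum decompositions of $\wh{W}^{*}$ must match summand by summand.

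For the shift claim I would use the commutator relation $[H, E_{\alpha} z^{i}] = \langle \alpha, H \rangle \, E_{\alpha} z^{i}$, which holds in $\wh{\mf{g}}$ and therefore as an identity in $U \bigl( \wh{\mf{g}} \bigr)$. For $\psi \in \wh{\mc{F}}_{\mu}^{\vee} = \wh{\mc{F}}^{*}_{\theta^{*}\mu}$ this yields
\begin{equation}
    H \cdot \bigl( E_{\alpha} z^{i} \cdot \psi \bigr) = E_{\alpha} z^{i} \cdot \bigl( H \cdot \psi \bigr) + \langle \alpha, H \rangle \, E_{\alpha} z^{i} \cdot \psi = \bigl\langle \theta^{*} \mu + \alpha, H \bigr\rangle \, E_{\alpha} z^{i} \cdot \psi \, ,
\end{equation}
showing $E_{\alpha} z^{i} \cdot \psi \in \wh{\mc{F}}^{*}_{\theta^{*} \mu + \alpha}$. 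Applying $\theta_{*} = (\theta^{*})^{-1}$ and invoking the first part of the lemma rewrites this as $\wh{\mc{F}}^{\vee}_{\mu + \theta_{*} \alpha}$, which is the claim.

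The finite case is verbatim the same argument, noting that $\wh{\theta}$ preserves $\mf{g} \llbracket z \rrbracket$, descends to $\mf{g}_{p} \to \mf{g}_{p}^{\op}$, and that the computation of weights never required negative powers of $z$. There is no genuine obstacle; the only bookkeeping point is keeping $\theta^{*}$ and $\theta_{*}$ on the correct side, which is handled by the intermediate passage through the starred weight spaces $\wh{\mc{F}}^{*}_{\bullet}$.
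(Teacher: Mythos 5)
Your argument is correct and follows essentially the same route as the paper's proof. For the equality $\wh{\mc{F}}_{\mu}^{\vee} = \wh{\mc{F}}^{*}_{\theta^{*}\mu}$ the paper phrases the computation through the idempotent $\wh{\I}_{\mu}$, while you decompose $\wh{w}$ directly along the weight grading and use that $\theta(H)\in\mf{h}$ stabilises each $\wh{\mc{F}}_{\nu}$ — the same calculation. For the shift $E_{\alpha}z^{i}\wh{\mc{F}}_{\mu}^{\vee}\subseteq\wh{\mc{F}}_{\mu+\theta_{*}\alpha}^{\vee}$, the paper computes the weight $-\theta_{*}\alpha$ of $\theta(E_{\alpha})z^{i}$ acting on $\wh{W}$ and dualises, whereas you work intrinsically in the left module $\wh{W}^{*}_{\theta}$ via the bracket identity $[H,E_{\alpha}z^{i}]=\langle\alpha,H\rangle E_{\alpha}z^{i}$ (the cocycle vanishes since $(H\mid E_{\alpha})=0$), then translate starred weights back to $\vee$-weights using part one. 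These are two presentations of the same computation; your version has the small advantage of never making $\theta(E_{\alpha})$ explicit.
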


\begin{proof}
	Let $\wh{\on{I}}_{\nu} \colon \wh W \to \wh W$ be the idempotent for the direct summand $\wh{\mc F}_{\mu} \subseteq \wh W$, viz. the endomorphism such that $\eval[1]{\wh{\on{I}}_{\mu}}_{\wh W(\mu')} = \delta_{\mu,\mu'} \on{Id}_{\wh W(\mu')}$.
	Then by definition $\wh \psi \in \wh{\mc F}_{\mu}^{\dual}$ means $\wh \psi = \wh \psi \circ \wh{\on{I}}_{\mu}$, and by construction
	\begin{equation}
		\theta(H) \wh{\on{I}}_{\mu} = \wh{\on{I}}_{\mu} \theta(H) = \Braket{ \theta^*\mu | H } \wh{\on{I}}_{\mu} \in \End \bigl( \wh W \bigr) \, , \qquad \text{for } \mu \in \mf h^{\dual} , \, H \in \mf h \, .
	\end{equation}
	Hence for $\wh w \in \wh W$ one has
	\begin{equation}
		\Braket{ H\wh \psi | \wh w } = \Braket{ \wh \psi | \wh{\on{I}}_{\mu} \bigl( \theta(H) \wh w \bigr) } = \Braket{ \theta^*\mu | H } \Braket{ \wh \psi | \wh w } \, ,
	\end{equation}
	whence the inclusion $\wh{\mc F}_{\mu}^{\dual} \subseteq \wh{\mc F}^*_{\theta^*\mu}$, and the equality follows from~\eqref{eq:restricted_duals}.

	The latter inclusion follows from $\theta(E_{\alpha})z^i \wh{\mc F}_{\mu} \subseteq \wh{\mc F}_{\mu - \theta_{*}\alpha}$ for $\alpha \in \mc R$, which is a straightforward computation using~\eqref{eq:affine_lie_bracket}.

	The same pair of arguments applies verbatim to the finite case.
\end{proof}

Hence~\eqref{eq:restricted_duals} is the $\mf h$-weight decomposition of $\theta$-dual singular modules, and the weights are contained inside $\theta^*(\lambda + Q) \subseteq \mf h^{\dual}$ (resp. $\theta^* (\lambda + Q^+)$) in the affine (resp. finite) case.
By Lem.~\ref{lem:highest_weight} we conclude that $\psi \in W^*_{\theta_0}$ is a \emph{lowest}-weight vector of lowest weight $\theta_0^*\lambda = -\lambda$, whereas $\psi \in W^*_{\theta_1}$ is a highest-weight vector of highest weight $\theta_1^*\lambda = \lambda$.

In particular in the contragredient case matching the cyclic vector with its dual yields a canonical morphism $\Phi \colon W \to W^*_{\theta_1}$, hence a generalisation of the Shapovalov form~\cite{shapovalov_1972_a_certain_bilinear_form_on_the_universal_enveloping_algebra_of_a_complex_semisimple_lie_algebra}
\begin{equation}
	S \colon W \otimes W \longrightarrow \mb C \, , \qquad \wh w \otimes \wh w' \longmapsto \Braket{ \Phi\bigl( \wh w \bigr) | \wh w' } \, .
\end{equation}
This may be degenerate, particularly since the image of the canonical morphism is the submodule
\begin{equation*}
	W'_{\theta} \ceqq U \bigl( \mf g \llbracket z \rrbracket \bigr) \psi \subseteq W^*_{\theta} \, ,
\end{equation*}
which in general is a proper submodule.\fn{More general Shapovalov forms are defined and studied in~\cite{calaque_felder_rembado_wentworth_2024_wild_orbits_and_generalised_singularity_modules_stratifications_and_quantisation}, which also provides a conjectural necessary/sufficient condition of nondegeneracy. (A proof of the conjecture might follow from the usage of the parabolic induction functors of~\cite{chaffe_topley_2023_category_o_for_truncated_current_lie_algebras}.)}
Nonetheless we can recursively find the obstruction for $\psi$ to generate the $\theta$-dual module.
To give a necessary condition consider the vector
\begin{equation*}
	\wh w = E_{-\alpha}z^{p-1} w \in \mc F_{\lambda - \alpha} \, , \qquad \alpha \in \mc R^+ \, .
\end{equation*}
By Lem.~\ref{lem:weight_spaces_dual} a linear form $\wh \psi \in W'_{\theta}$ that vanishes on $\mc B_W \setminus \Set{\wh w}$ must lie in the span of $\Set{ \theta^{-1}(E_{\alpha}) \psi, \dc, \theta^{-1}(E_{\alpha})z^{p-1} \psi} \subseteq \mc F_{\lambda - \alpha}^{\dual}$, so consider a generic element
\begin{equation}
	\wh \psi = \wh \psi(b_0,\dc,b_{p-1}) = \sum_{j = 0}^{p-1} b_j \theta^{-1} (E_{\alpha}) z^j \psi \, , \qquad b_j \in \mb C \, .
\end{equation}
Using $z^p \mf g \llbracket z \rrbracket W = (0) = \mf n^+ \llbracket z \rrbracket w$ and $\Braket{ \psi | w } = 1$ yields
\begin{equation}
	\Braket{ \wh \psi | \wh w } = b_{p-1} \Braket{ a_{p-1} | H_{\alpha} z^{p-1} },
\end{equation}
so we need the highest irregular part to be \emph{regular} (cf. \S~\ref{sec:irregular_conformal_blocks}).

Conversely:

\begin{proposition}
	One has $W'_{\theta} = W^*_{\theta}$ by choosing parameters $(\lambda,q)$ in a dense subspace of the affine space $\mf h^{\dual}_p$---with respect to the strong/classical topology.
\end{proposition}

\begin{proof}
	Clearly $\mc F_{\lambda}^{\dual} \subseteq W'_{\theta}$, and then we reason recursively on the $\mf h^{\dual}$-weight space decomposition of $W$.

	Choose $\wh w \in \mc B_W \cap \mc F_{\mu}$, and consider the vectors $\wh w_{\alpha}(k) \ceqq E_{-\alpha} z^k \wh w \in \mc F_{\mu - \alpha}$, for $\alpha \in \mc R^+$ and $k \in \Set{ 0,\dc,p-1}$.
	As $\wh w$, $\alpha$ and $k$ vary, the vectors $\wh w_{\alpha}(k)$ exhaust $\mc B_W \cap \mc F_{\mu - \alpha}$, so we must find coefficients $b_{ij} \in \mb C$ such that $\Braket{ \wh \psi_{\alpha}(i) | \wh w_{\alpha}(k) } = \delta_{ik}$, where
	\begin{equation}
		\wh \psi_{\alpha}(i) = \sum_{j = 0}^{p-1} b_{ij} \theta^{-1} (E_{\alpha}) z^j \wh \psi \in \mc F_{\mu - \alpha}^{\dual} \, , \qquad \text{for } i \in \Set{ 0,\dc,p-1} \, ,
	\end{equation}
	and where $\wh \psi \in \mc F_{\mu}^{\dual}$ is the dual of $\wh w$---lying in $W'_{\theta}$ by the recursive hypothesis.

	Now one has
	\begin{equation}
		\Braket{ \wh \psi_{\alpha}(i) | \wh w_{\alpha}(k) }	= \sum_{j = 0}^{p-1} b_{ij} \Braket{ \wh \psi | E_{\alpha}z^j E_{-\alpha} z^k \wh w } \, ,
	\end{equation}
	hence the given condition means $BM = \on{Id}_{\mb C^p}$, where $B$ and $M$ are the $p$-by-$p$ matrices with coefficients $B_{ij} = b_{ij}$ and $M_{jk} = \Braket{ \wh \psi | E_{\alpha} z^j E_{-\alpha} z^k \wh w }$, respectively (the latter selects the component of $E_{\alpha} z^j E_{-\alpha} z^k \wh w \in \mc F_{\mu}$ along the line $\mb C \wh w$, in the basis~\eqref{eq:pbw_basis_finite_singular_module}).
	A solution exists if and only if $\det (M) \neq 0$.

	Now the determinant of $M = M(\wh w,\alpha)$ is a degree-$p$ polynomial whose coefficients depend polynomially on $(\lambda,q)$, hence it amounts to a polynomial function $\mf h^{\dual}_p \to \mb C$.
	Thus $W'_{\theta} = W^*_{\theta}$ by taking $(\lambda,q)$ in a countable intersection of open dense subsets.
\end{proof}

Finally we can choose a complementary subspace to $W$ inside $\wh W$, and extend $\psi \colon W \to \mb C$ by zero to the whole of $\wh W$---e.g. extract a PBW-basis from~\eqref{eq:pbw_basis_affine_singular_module}.
Then one can consider the module $\wh W'_{\theta} \subseteq \wh W^*_{\theta}$ generated by this extension over $\mc L\mf g$, and define gradings/filtrations on $\wh W'_{\theta} \twoheadrightarrow W'_{\theta}$ analogously to \S\S~\ref{sec:gradings} and~\ref{sec:filtrations}, using the generating set~\eqref{eq:pbw_basis_enveloping_algebra_affine}, the basis~\eqref{eq:pbw_basis_finite_enveloping_algebra}, and the standard filtration of $U \bigl( \theta^{-1} (\mf n^+) \bigr)$.
These satisfy the analogous identities of~\eqref{eq:negative_degree_grading_shift}--\eqref{eq:constant_filtration_shift}.

\section{Segal--Sugawara operators}
\label{sec:sugawara}

For $n \in \mb Z$ define
\begin{equation}
	\label{eq:sugawara_operator}
	L_n \ceqq \frac{1}{2 \bigl( \kappa + h^{\dual} \bigr)} \sum_{j \in \mb Z} \Biggl( \sum_k \colon X_k z^{-j} \cdot X^k z^{n+j} \colon \Biggr) \, ,
\end{equation}
where $(X_k)_k$ and $(X^k)_k$ are $( \cdot \mid \cdot)$-dual bases of $\mf g$, $\kappa \neq -h^{\dual}$ is a \emph{noncritical} level, and in the normal-ordered product one puts elements of $\mf g \llbracket z \rrbracket \subseteq \mc L\mf g$ to the right.

The Sugawara operators~\eqref{eq:sugawara_operator} (due to Segal in this particular form) are well-defined elements of the level-$\kappa$ completion of $U \bigl( \wh{\mf g} \bigr)$ with respect to the chain of left ideals $U \bigl( \wh{\mf g} \bigr) z^{\bullet} \mf g \llbracket z \rrbracket \sse U\bigl(\wh{\mf g} \bigr)$.
If follows from Lem.~\ref{lem:smooth_modules} that there are well-defined actions of~\eqref{eq:sugawara_operator} on the modules $W \subseteq \wh W$.

\subsection{Cyclic vector as Sugawara eigenvector}

The cyclic vector $w \in \wh W$ is a common eigenvector for the Sugawara operators when $n \gg 0$.
To get explicit formul\ae{} for the eigenvalues we recall further euclidean properties of the Cartan--Weyl basis~\eqref{eq:cartan_weyl_basis}.

\begin{remark}[On bases and dualities]~\newline
	\label{rem:root_basis}
	Recall that
	$(H_{\alpha} \mid H_{\alpha}) E^{\pm \alpha} = 2 E_{\mp \alpha}$.
	Using the pairing $( \cdot \mid \cdot) \colon \mf h^{\dual} \otimes \mf h^{\dual} \to \mb C$ induced by the minimal-form duality $\mf h \simeq \mf h^{\dual}$ this can be written $2 E^{\pm \alpha} = (\alpha \mid \alpha) E_{\mp \alpha}$.

	Then we replace the simple-root basis of $\mf h$ with a $( \cdot \mid \cdot)$-orthonormal basis, denoted $(H_k)_k$---i.e. we `divide' by the Cartan matrix---, and for $i \in \mb Z$ we transfer the basis and the pairings to $\mf g \otimes z^i$ and $\bigl( \mf g \otimes z^i \bigr)^{\dual} \simeq \mf g^{\dual} \otimes z^i$ using the canonical vector space isomorphism $\mf g \simeq \mf g \otimes z^i$.
	Then one has the tautological basis-independent identity
	\begin{equation}
		( \mu \mid \mu') = \sum_{k = 1}^{r} \Braket{ \mu | H_k z^i} \Braket{ \mu' | H_k z^j } \, \qquad \text{for } \mu \in \mf h^{\dual} \otimes z^i, \mu' \in \mf h^{\dual} \otimes z^j \, .
	\end{equation}
\end{remark}

Denote as customary $\rho \ceqq \frac{1}{2} \sum_{\alpha \in \mc R^+} \alpha \in \mf h^{\dual}$ the half-sum of positive roots.

\begin{proposition}
	\label{prop:sugawara_eigenvalues}
	The cyclic vector $w$ is a common eigenvector for the operators~\eqref{eq:sugawara_operator} with $n \geq p-1$.
	If $n > 2(p-1)$ then $L_n w = 0$, else $L_n w = l_n w$ with
	\begin{equation}
		\label{eq:sugawara_eigenvalues_I}
		l_n \ceqq \frac{1}{2 \bigl( \kappa + h^{\dual} \bigr)} \sum_{j = 1-p+n}^{p-1} (a_j \mid a_{n-j}) \, , \qquad \text{for } n \in \Set{ p, \dc, 2(p-1) } \, ,
	\end{equation}
	and
	\begin{equation}
		\label{eq:sugawara_eigenvalues_II}
		l_{p-1} \ceqq \frac{1}{2 \bigl( \kappa + h^{\dual} \bigr)} \Biggl( \sum_{j = 0}^{p-1} (a_j \mid a_{p - 1 - j}) + 2p (\rho \mid a_{p-1}) \Biggr) \, .
	\end{equation}
\end{proposition}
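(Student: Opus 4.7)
The plan is to compute $L_n w$ directly from~\eqref{eq:sugawara_operator}, reducing the doubly-infinite sum to a finite one via the relations~\eqref{eq:annihilator_cyclic_vector} satisfied by $w$. The first step is to restrict the range of summation. For $j > 0$ the summand $X_k z^{-j} X^k z^{n+j}$ is already in normal order and $X^k z^{n+j} w = 0$ because $n+j \geq n+1 \geq p$. For $j < -n$ the normal-ordered summand becomes $X^k z^{n+j} X_k z^{-j}$, and then $X_k z^{-j} w$ vanishes since $-j > n \geq p-1$ forces $-j \geq p$. The surviving indices therefore satisfy $-j, n+j \in \{0,\dotsc,p-1\}$, equivalently $j \in [\max(1-p,-n),\min(0,p-1-n)]$; this interval is empty exactly when $n > 2(p-1)$, proving the first assertion.

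For each admissible $j$ I would decompose the inner sum $\sum_{k} X_k z^{-j} \cdot X^k z^{n+j}$ along the triangular decomposition in~\eqref{eq:cartan_weyl_basis}, using an orthonormal basis $(H_l)$ of $\mf{h}$ and the duality $E^{\pm\alpha} = \tfrac{(\alpha \mid \alpha)}{2} E_{\mp\alpha}$ of Remark~\ref{rem:root_basis}. The Cartan block $\sum_l H_l z^{-j} H_l z^{n+j} w$ evaluates to $(a_{-j} \mid a_{n+j}) w$ by~\eqref{eq:annihilator_cyclic_vector} and the intrinsic identity $\sum_l \langle \mu, H_l \rangle \langle \mu', H_l \rangle = (\mu \mid \mu')$ of Remark~\ref{rem:root_basis}. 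The $F_\alpha E_\alpha$ block vanishes because $E_\alpha z^{n+j} w = 0$ for $n+j \geq 0$. For the $E_\alpha F_\alpha$ block a single commutation together with $E_\alpha z^{-j} w = 0$ leaves $H_\alpha z^n w + c_{j,\alpha} \kappa w$, with cocycle coefficient $c_{j,\alpha} = \tfrac{(\alpha \mid \alpha)}{2}(-j) \Res_{z = 0}(z^{n-1} \dif z) = \tfrac{(\alpha \mid \alpha)}{2}(-j) \delta_{n,0}$ read off from~\eqref{eq:cocycle_affine_lie_algebra}; this coefficient vanishes in every case covered by the proposition, either because $n \geq 1$, or because in the remaining case $p = 1$ the unique admissible index $j = 0$ kills $(-j)$.

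It then remains to collect terms. If $n \geq p$, then $H_\alpha z^n w = 0$ already, so only the Cartan block survives, and reindexing by $i = -j$ recovers~\eqref{eq:sugawara_eigenvalues_I}. If $n = p-1$, then $H_\alpha z^{p-1} w = \langle a_{p-1}, H_\alpha \rangle w$ contributes independently of $j$; summing over the $p$ admissible values of $j$ and over $\alpha \in \mc{R}^+$, and applying $\tfrac{(\alpha \mid \alpha)}{2} \langle \mu, H_\alpha \rangle = (\mu \mid \alpha)$ together with $\sum_{\alpha \in \mc{R}^+} \alpha = 2\rho$, produces exactly the extra $2p(\rho \mid a_{p-1})$ summand of~\eqref{eq:sugawara_eigenvalues_II}. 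The main obstacle is purely bookkeeping: applying the normal-ordering prescription consistently at the boundary indices $j \in \{0,-n\}$, and tracking that the central cocycle~\eqref{eq:cocycle_affine_lie_algebra} does not sneak in outside of $n = 0$.
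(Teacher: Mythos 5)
Your argument follows the paper's proof exactly: it restricts the summation range of~\eqref{eq:sugawara_operator} via the annihilation relations~\eqref{eq:annihilator_cyclic_vector} and the normal-ordering prescription, observes that normal ordering is a no-op on the surviving range $1-p \leq j \leq p-1-n$, then decomposes the Casimir tensor along the Cartan--Weyl basis and evaluates the Cartan block, the $F_\alpha E_\alpha$ block, and the $E_\alpha F_\alpha$ block on $w$ precisely as in \S~\ref{sec:proof_prop_sugawara_eigenvalues}. Your one additional remark---that the cocycle~\eqref{eq:cocycle_affine_lie_algebra} could in principle contribute through the $E_\alpha F_\alpha$ commutation but vanishes (because $n \geq 1$, or because $p = 1$ forces $j = 0$)---spells out a detail the paper passes over silently, and is a welcome bit of extra care.
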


\begin{proof}
	Postponed to \S~\ref{sec:proof_prop_sugawara_eigenvalues}.
\end{proof}

Hence the cyclic vector is a Gaiotto--Teschner/Bonelli--Maruyoshi--Tanzini irregular state of order $p-1$~\cite{gaiotto_teschner_2012_irregular_singularities_in_liouville_theory_and_argyres_douglas_type_gauge_theories,bonelli_maruyoshi_tanzini_2012_wild_quiver_gauge_theories}, but arising from modules for affine Lie algebras.

\begin{remark}
	\label{rem:conformal_weight}
	This generalises the standard fact that $L_n v = 0$ for $n > 0$, and that $v$ is an $L_0$-eigenvector, with nonzero eigenvalue for generic values of $\lambda \in \mf h^{\dual}$.
	Namely if $p = 1$ then~\eqref{eq:sugawara_eigenvalues_II} reduces to
	\begin{equation}
		L_0 v = \Delta_{\lambda} v \, , \qquad \Delta_{\lambda} = \frac{(\lambda \mid \lambda + 2\rho)}{2(\kappa + h^{\dual})} \, ,
	\end{equation}
	reverting to the notation $\lambda = a_0$, which recovers the \emph{conformal weight} corresponding to the action of the quadratic Casimir~\eqref{eq:quadratic_casimir}.
\end{remark}

\subsection{Action on finite modules}

Later we will use the action of the operator $L_{-1}$ on the finite module $W \subseteq \wh W$.

Using $z^p \mf g \llbracket z \rrbracket W = (0)$ we see nonvanishing terms arise for $1 - p \leq j \leq p$ in~\eqref{eq:sugawara_operator}, and resolving the ordered product yields
\begin{equation}
	\label{eq:action_sugawara_minus_one_finite_module_general}
	L_{-1} \wh w = \frac{1}{\kappa + h^{\dual}} \sum_{j = 1}^p \Biggl( \sum_k X_kz^{-j} X^k z^{j-1} \Biggr) \wh w \, , \qquad \wh w \in W \, .
\end{equation}

As expected $L_{-1} \wh w \not\in W$, but it can be put back into the finite module via the loop-algebra action (see \S~\ref{sec:coinvariants}).

\begin{remark*}
	We see that~\eqref{eq:action_sugawara_minus_one_finite_module_general} generalises the usual formula from the tame case:
	\begin{equation}
		\label{eq:action_sugawara_minus_one_finite_module_tame}
		\begin{split}
			L_{-1} \wh v = \frac{1}{\kappa + h^{\dual}} \sum_k X_k z^{-1} X^k \wh v \, , \qquad \wh v \in V \, .
		\end{split}
	\end{equation}
\end{remark*}

\section{Irregular blocks: first version}
\label{sec:irregular_conformal_blocks}

Consider the Riemann sphere $\Sigma \ceqq \mb CP^1$, choose an integer $n \geq 1$, and mark distinct points $p_1, \dc, p_n \in \Sigma$.
Denote by $J = \Set{ 1, \dc, n }$ the ordered set of labels for the points, and by $\bm p = ( p_1, \dc, p_n)$ the ordered set of points.

Let $\ms O_{\Sigma}$ be the structure sheaf of regular functions on $\Sigma$, seen as a (nonsingular) complex projective curve.
Then consider the stalks $\ms O_j = \ms O_{\Sigma,p_j}$ at the marked points, their (unique) maximal ideals $\mf M_j = \mf M_{p_j} \subseteq \ms O_j$ of germs of functions vanishing at $p_j$, the completions $\wh{\ms O}_j \ceqq \varprojlim_n \ms O_j \big\slash \mf M^n_j$, and their field of fractions $\wh{\ms O}_j \hookrightarrow \wh{\ms K}_j$.

\begin{remark*}
	If $z_j$ is a local coordinate on $\Sigma$ vanishing at $p_j$ then
	\begin{equation*}
		\ms O_j \simeq \mb C [z_j] \, , \qquad \mf M_j = z_j \mb C [z_j] \, , \qquad \wh{\ms O}_j \simeq \mb C \llbracket z_j \rrbracket \, , \qquad \wh{\ms K}_j \simeq \mb C (\!( z_j ) \!) \, .
	\end{equation*}
	More generally this follows by choosing a uniformiser, i.e. a generator of the maximal ideal(s).
\end{remark*}

Then consider the loop algebras $(\mc L\mf g)_j \ceqq \mf g \otimes \wh{\ms K}_j$ and the associated affine Lie algebras $\wh{\mf g}_j \twoheadrightarrow (\mc L\mf g)_j$.
There are canonical isomorphisms $\wh{\mf g}_i \simeq \wh{\mf g}_j$ for $i,j \in J$, and the subscripts distinguish the local picture at the marked points.

Now for $j \in J$ choose also an integer $r_j \geq 1$, and define singular modules as in \S~\ref{sec:setup}.
Hence consider the Lie subalgebras $\mf S^{(r_j)} \subseteq \wh{\mf g}_j$, a common level $\kappa \in \mb C$ for the central elements, and singular characters $\chi_j = \chi(\lambda_j,q_j,\kappa)$, where $\lambda_j \in \mf h^{\dual}$ and
\begin{equation*}
	q_j = \bigl( (a_j)_1, \dc, (a_j)_{r_j - 1} \bigr) \, , \qquad  (a_j)_i \in \bigl( \mf h \otimes z^i \bigr)^{\dual} \, .
\end{equation*}
This yields singular modules $W^{(r_j)}_{\chi_j} \eqqcolon W_j \subseteq \wh W_j \ceqq \wh W^{(r_j)}_{\chi_j}$, and we consider the vector spaces
\begin{equation}
	\label{eq:tensor_product_singular_modules}
	\wh{\mc H} = \wh{\mc H}_{\bm p,\bm \chi} \ceqq \bigotimes_{j \in J} \wh W_j \, , \qquad \mc H = \mc H_{\bm p,\bm \chi} \ceqq \bigotimes_{j \in J} W_j \, ,
\end{equation}
where $\bm \chi = ( \chi_j )_{j \in J}$.
Clearly $\mc H \subseteq \wh{\mc H}$, and the dependence on the choice of marked points is void (it becomes relevant after considering the action of $\mf g$-valued meromorphic functions in \S~\ref{sec:action_meromorphic_functions}).

The spaces~\eqref{eq:tensor_product_singular_modules} are endowed with natural structures of left modules for the associative algebras  $U \bigl( \wh{\mf g} \bigr)^{\otimes n} \simeq \bigotimes_{j \in J} U \bigl( \wh{\mf g}_j \bigr)$ and $\bigotimes_{j \in J} U \bigl( \mf g_{r_j} \bigr)$, respectively.

Moreover for indices $i \neq j \in J$ denote by $\iota^{(ij)} \colon U(\mc L\mf g)^{\otimes 2} \to U(\mc L\mf g)^{\otimes n}$ the natural inclusion on the $i$-th and $j$-th slot, defined on pure tensors by
\begin{equation}
	\label{eq:embedding_quadratic_tensors}
	X \otimes Y \longmapsto 1^{\otimes i-1} \otimes X \otimes 1^{\otimes j-i-1} \otimes Y \otimes 1^{\otimes n-j} \, ,
\end{equation}
for $i < j$, and analogously for $i > j$.
Finally define $\iota^{(ii)} \colon U(\mc L\mf g)^{\otimes 2} \to U(\mc L\mf g)^{\otimes n}$ by $X \otimes Y \mapsto 1^{\otimes i-1} \otimes X Y \otimes 1^{\otimes n - i}$.
This yields an action of quadratic loop-algebra tensors on~\eqref{eq:tensor_product_singular_modules}.

\subsection{Tame isomonodromy times}

We now vary part of the parameters defining the spaces~\eqref{eq:tensor_product_singular_modules}, namely the marked points.
An \emph{admissible} deformation is one where they do not coalesce, so marked points vary inside the configuration space
\begin{equation*}
	C_n \ceqq \Conf_n(\Sigma) \subseteq \Sigma^n \, ,
\end{equation*}
of ordered $n$-tuples of (labeled) points on $\Sigma$.

The space $C_n$ is the space of \emph{tame} isomonodromy times.
It is a complex manifold of dimension $n$.

\begin{remark*}
	The terminology points again to meromorphic connections on the sphere, cf. introduction.

	Let us repeat that the positions of the poles, and the irregular types, are the natural deformation parameters for the \emph{isomonodromic} deformations of irregular-singular meromorphic $G$-connections over the sphere.
	This means keeping Stokes data locally constant,
	which in turn generalise the isomorphism class of the monodromy representation $\nu \colon \pi_1 \bigl( \Sigma^{\circ},b \bigr) \to G$, where $\Sigma^{\circ} \ceqq \Sigma \setminus \set{ p_j }_{j \in J}$ is the punctured sphere with the poles removed and $b \in \Sigma^{\circ}$ a base point (cf.~\cite{boalch_2002_g_bundles_isomonodromy_and_quantum_weyl_groups} and~\cite{boalch_2007_quasi_hamiltonian_geometry_of_meromorphic_connections} in this generic case).

	Concretely,
	isomonodromic deformations amount to a system of nonlinear differential equations where the positions of the poles and the irregular types are precisely the independent variables, hence they become the `times': the former are the tame/regular times, and the rest are wild/irregular.

	Geometrically, these differential equations constitute a \emph{nonlinear} flat/integrable symplectic connection in the local system of moduli spaces $\mc M^*_{\dR}$ of meromorphic connections, as the marked points and the irregular types vary (i.e. as the \emph{wild} Riemann surface structure on the sphere varies~\cite{boalch_2014_geometry_and_braiding_of_stokes_data_fission_and_wild_character_varieties}).
\end{remark*}

\begin{remark}
	\label{rem:local_chart_configuration_space}
	Let $\Sigma \supseteq U \xrightarrow{z}{} \mb C$ be a local affine chart on $\Sigma$---so $\Sigma \simeq \mb C \cup \Set{\infty}$.
	Then $\bm t \colon C_n(U) \to \mb C^n$ are coordinates on the open subset $C_n(U) \ceqq \Conf_n(U) \subseteq C_n$, where $\bm t = (t_j)_{j \in J}$ and $t_j (\bm p) \ceqq z(p_j)$---so that overall $C_n(U) \simeq \Conf_n (\mb C)$.
	This yields an atlas on the configuration space.
\end{remark}

Now for a $J$-tuple $\bm \chi$ of singular characters we consider the vector bundles $\wh{\bm{\mc H}} = \wh{\mc H}_{\bullet,\bm \chi} \to C_n$ and $\bm{\mc H} = \mc H_{\bullet,\bm \chi} \to C_n$, whose fibres over $\bm p \in C_n$ are the spaces~\eqref{eq:tensor_product_singular_modules}, respectively.
We have an inclusion $\bm{\mc H} \subseteq \wh{\bm{\mc H}}$, as well as global trivialisations:
\begin{equation}
	\wh{\bm{\mc H}} \simeq \bigotimes_{J \in J} U \bigl( \wh{n}^- \bigr) \otimes_{U (\mf n^-)} U \bigl( \mf n^-_{r_j} \bigr) \times C_n \longrightarrow C_n \, ,
\end{equation}
by~\eqref{eq:vector_space_isomorphism_singular_module}, and the simpler
\begin{equation}
	\bm{\mc H} \simeq \bigotimes_{j \in J} U \bigl( \mf n^-_{r_j} \bigr) \times C_n \longrightarrow C_n \, ,
\end{equation}
by $W_j \simeq U \bigl(\mf n^-_{r_j} \bigr)$.
The point here is that both vector-space isomorphisms do \emph{not} depend on the choice of the marked points (nor on the character, cf.~\ref{sec:irregular_isomonodromy_times}).

\subsection{Action of meromorphic functions: punctual version}
\label{sec:action_meromorphic_functions}

Given marked points $p_j \in \Sigma$ consider the effective divisor $D \ceqq \sum_{j \in J} [ p_j ]$ on $\Sigma$, and denote as customary by $\ms O_{\ast D} ( \Sigma ) = \ms O_{\Sigma,\ast D} (\Sigma)$ the vector space of meromorphic functions along $\Sigma$ with poles at most on (the support of) $D$.
Then let $\mf g_{\ast D} (\Sigma) \ceqq \mf g \otimes \ms O_{\ast D} (\Sigma)$ be the Lie algebra of $\mf g$-valued such meromorphic functions, with bracket coming from $\mf g$:
\begin{equation*}
	[f,g] (p) \ceqq \bigl[ f(p),g(p) \bigr] \in \mf g \, , \qquad \text{for } f,g \in \mf g_{\ast D} ( \Sigma ) \, , p \in \Sigma \, .
\end{equation*}

Taking Laurent expansions at $p_j$ yields a linear map $\tau_j \colon \ms O_{\ast D}(\Sigma) \to \wh{\ms K}_j$, and tensoring with $\mf g$ a linear map $\mf g_{\ast D}(\Sigma) \to \mc L\mf g_j \subseteq \wh{\mf g}_j$.

\begin{remark*}
	If $z_j$ is a local coordinate on $\Sigma$ vanishing at $p_j$, and $f \in \ms O_{\ast D}(\Sigma)$, then there are coefficients $f_i \in \mb C$ such that
	\begin{equation}
		\tau_j (f) = f(z_j) = \sum_{i \geq -\ord_{p_j}(f)} f_i z_j^i \in \mb C (\!( z_j ) \!) \, ,
	\end{equation}
	where $\ord_p(f) \geq 0$ is the order of $p \in \Sigma$ as a pole of $f$.
\end{remark*}

Thus there is an arrow
\begin{equation}
	\label{eq:action_meromorphic_functions}
	\tau \colon \mf g_{\ast D}(\Sigma) \longrightarrow \End \bigl( \wh{\mc H} \bigr) \, , \qquad \tau(X \otimes f) \ceqq \sum_{j \in J} \bigl( X \otimes \tau_j(f) \bigr)^{(j)} \, .
\end{equation}
Using~\eqref{eq:affine_lie_bracket}, and the fact that the sum of the residues of a meromorphic 1-form on $\Sigma$ vanishes, shows that~\eqref{eq:action_meromorphic_functions} is a morphism of Lie algebras (cf.~\cite{kohno_2002_conformal_field_theory_and_topology}).
Then the action $\tau \colon \mf g_{\ast D}(\Sigma) \to \mf{gl} \bigl( \wh{\mc H} \bigr)$ endows $\wh{\mc H}$ with a left $\mf g_{\ast D}(\Sigma)$-module structure.

\begin{definition}[Irregular covacua, first version]~\newline
	\label{def:irregular_conformal_blocks_space}
	The space of \emph{irregular covacua} at the pair $(\bm p,\bm \chi)$ is the space of coinvariants of the $\mf g_{\ast D}(\Sigma)$-module $\wh{\mc H}$:
	\begin{equation}
		\label{eq:irregular_conformal_blocks_space}
		\ms H \ceqq \wh{\mc H}_{\mf g_{\ast D}} = \wh{\mc H}_{\bm p,\bm \chi} \big\slash \mf g_{\ast D}(\Sigma) \wh{ \mc H}_{\bm p,\bm \chi} \, .
	\end{equation}
\end{definition}

Then the space of \emph{vacua} is the dual of~\eqref{eq:irregular_conformal_blocks_space}:
\begin{align}
	\label{eq:vacua}
	\ms H^{\dagger} & \ceqq \Hom_{\mf g_{\ast D}(\Sigma)} (\wh{\mc H}_{\bm p,\bm \chi},\mb C)                                                                                                                                                                \\
	                & = \Set{ \Bra{\psi} \in \wh{\mc H}^{\dual}_{\bm p,\bm \chi} | \Braket{\psi | \tau(X \otimes f) \wh{\bm w} } = 0 \text{ for } X \in \mf g \, , \, f \in \ms O_{\ast D}(\Sigma) \, , \, \wh{\bm w} \in \wh{\mc H}_{\bm p,\bm \chi} } \, .
\end{align}
Note that we change the usual notation, since in this paper we focus on the space of coinvariants of $\wh{\mc H}$---rather than the space of invariants of its dual (cf.~\cite{biswas_mukhopadhyay_wentworth_2021_geometrization_of_the_tuy_wzw_kz_connection}).

By~\eqref{eq:action_meromorphic_functions}, the fundamental identity inside the space of covacua is
\begin{equation}
	\label{eq:identity_coinvariants}
	\Bigl[ \bigl( X \otimes \tau_i(f) \bigr)^{(i)} \wh{\bm w} \Bigr] = - \sum_{j \in J \setminus \Set{i}} \Bigl[ \bigl( X \otimes \tau_j(f) \bigr)^{(j)} \wh{\bm w} \Bigr] \, ,
\end{equation}
for $i \in J$, where square brackets denote equivalence classes modulo $\mf g_{\ast D}(\Sigma) \wh{ \mc H}_{\bm p,\bm \chi}$.

\subsection{Action of meromorphic functions: global version}

Now we want to globalise the action~\eqref{eq:action_meromorphic_functions} over the space of configurations of $n$-tuples of points on the sphere, i.e. we want a map of sheaves of Lie algebras on $C_n$.

To define the domain sheaf consider the projection
\begin{equation}
	\pi_{\Sigma} \colon \Sigma^{n+1} \longrightarrow \Sigma^n, \qquad (p,p_1,\dc,p_n) \longmapsto (p_1,\dc,p_n) \, .
\end{equation}
Then set
\begin{equation}
	Y \ceqq \pi_{\Sigma}^{-1} (C_n) = \Set {(p,p_1,\dc,p_n) | p_i \neq p_j \text{ for } i \neq j } \subseteq \Sigma^{n+1} \, .
\end{equation}
Now for $j \in J$ define the hyperplane $P_j \ceqq \Set { p = p_j } \subseteq \Sigma^{n+1}$, consider the effective divisor $\mc D \ceqq \sum_{j \in J} \bigl[ Y \cap P_j \bigr]$ on $Y$, and let $\ms O_{\ast \mc D} = \ms O_{Y,\ast \mc D}$ be the sheaf of meromorphic functions on $Y$ with poles at most along (the support of) $\mc D$.
Then we consider the pushforward sheaf $(\pi_{\Sigma})_{\ast} \ms O_{\ast \mc D}$ on $C_n$, and tensoring yields the sheaf $\mf g_{\ast \mc D} \ceqq \mf g \otimes (\pi_{\Sigma})_{\ast} \ms O_{\ast \mc D}$---of Lie algebras.

\begin{remark*}
	If $U' \subseteq C_n$ is open then $\mf g_{\ast \mc D} (U')$ is the Lie algebra of $\mf g$-valued meromorphic functions on $\Sigma \times U'$, such that the restriction to $\Sigma \times \Set{ \bm p } \simeq \Sigma$ has poles at most at the set $\set { p_j }_{j \in J}$ for all $\bm p \in U'$, as wanted.
\end{remark*}

Now for an open subspace $U' \subseteq C_n$ consider the Laurent expansion $\tau_j(U') (f)$ of functions $f \in \mc O_{\ast \mc D} \bigl( \pi_{\Sigma}^{-1}(U') \bigr)$ along the divisor $Y \cap P_j$.
Tensoring with $\mf g$ yields a map of sheaves
\begin{equation}
	\tau_j \colon \mf g_{\ast \mc D} \longrightarrow \ms O_{C_n} \otimes \mc L\mf g_j \subseteq \ms O_{C_n} \otimes \wh{\mf g}_j \, ,
\end{equation}
where $\ms O_{C_n}$ is the structure sheaf on the configuration space.

\begin{remark*}
	Explicitly, if $z_j$ is a local coordinate on $\Sigma$ vanishing at $p_j$, and $U' = \Conf_n(U)$ where $U \subseteq \Sigma$ is an open affine subset, and $f \in (\pi_{\Sigma})_{\ast} \ms O_{\ast \mc D} (U')$, then there are suitable functions $f_i \colon U' \to \mb C$ such that
	\begin{equation}
		\tau_j(U') (f) = f(z_j,t_1,\dc,t_n) = \sum_i f_i(t_1,\dc,t_n) z_j^i \in \ms O_{C_n}(U') \otimes \mb C (\!( z_j ) \!) \, ,
	\end{equation}
	using the local coordinates $(t_j)_{j \in J}$ on $U' \subseteq C_n$ of Rmk.~\ref{rem:local_chart_configuration_space}.
	By definition the functions $f_i$ may have poles on the hyperplanes $\Set { t_i = t_j } \subseteq \mb C^n$.
\end{remark*}

Finally summing the action over each slot of the tensor product we have a sheaf-theoretic analogue of~\eqref{eq:action_meromorphic_functions}, acting on sections of $\wh{\bm{\mc H}}$.

\subsection{Irregular isomonodromy times}
\label{sec:irregular_isomonodromy_times}

One may add the other possible deformations, e.g. with the following setup.

Recall the \emph{regular} parts of the Cartan subalgebra and its dual are the complements of (co)root hyperplanes:
\begin{equation}
	\mf h_{\reg} \ceqq \mf h \setminus \bigcup_{\alpha \in \mc R} \Ker (\alpha) \, , \qquad \mf h_{\reg}^{\dual} \ceqq \mf h^{\dual} \setminus \bigcup_{\alpha \in \mc R} \Ker \bigl( \ev_{H_{\alpha}} \bigr) \, ,
\end{equation}
and analogously for $\mf h \otimes z^i$ and its dual.

Then consider irregular parts $q_j \in \mf b_{r_j}^{\dual}$ such that the most irregular coefficient $(a_j)_{r_j-1}$ is regular, and define an \emph{admissible} deformation of as one in which the most irregular coefficient does not cross coroot hyperplanes.

\begin{remark*}
	This is the analogous condition as for the marked points: the open charts $C_n(\mb C) \subseteq \mb C^n$ are regular parts for Cartan subalgebras in type $A$.
\end{remark*}

Doing so we get to the space of isomonodromy times
\begin{equation}
	\label{eq:isomonodromy_times_space}
	\bm{B} = C_n \times \prod_{j \in J} \bigl( \mf h_{r_j}^{\dual} \bigr)_{\reg} \, ,
\end{equation}
where
\begin{equation}
	\bigl( \mf h^{\dual}_{r_j} \bigr)_{\reg} = \prod_{i = 1}^{r_j-2} \bigl( \mf h \otimes z^i \bigr)^{\dual} \times \bigl( \mf h \otimes z^{r_j-1} \bigr)^{\dual}_{\reg} \, ,
\end{equation}
and in turn $(\mf h \otimes z^i)_{\reg} \ceqq \set{ Hz^i | H \in \mf h_{\reg} } \subseteq \mf h \otimes z^i$ for $i \in \mb Z$.

The space~\eqref{eq:isomonodromy_times_space} is a complex manifold of dimension $d = n + r \sum_{j \in J} (r_j - 1)$, where $r = \rk(\mf g)$.
As expected it coincides with the space of tame isomonodromy times if $r_j = 1$ for $j \in J$.\fn{Again, this has now been studied in much more detail and generality in~\cite{doucot_rembado_tamiozzo_2022_local_wild_mapping_class_groups_and_cabled_braids,doucot_rembado_2023_topology_of_irregular_isomonodromy_times_on_a_fixed_pointed_curve,boalch_doucot_rembado_2022_twisted_local_wild_mapping_class_groups_configuration_spaces_fission_trees_and_complex_braids,doucot_rembado_tamiozzo_2024_moduli_spaces_of_untwisted_wild_riemann_surfaces,doucout_rembado_yamakawa_twisted_g_local_wild_mapping_class_groups}.}

\begin{remark*}
	If there is just one irregular module $W_j$ with $r_j = 2$ then
	\begin{equation}
		\bm{B} = C_n \times \bigl( \mf h \otimes z \bigr)^{\dual}_{\reg} \, ,
	\end{equation}
	and one recovers the base space for the FMTV connection~\cite{felder_markov_tarasov_varchenko_2000_differential_equations_compatible_with_kz_equations}---up to the canonical vector-space isomorphism $\mf h \otimes z \simeq \mf h$.
	If further the variations of marked points are neglected then~\eqref{eq:isomonodromy_times_space} becomes the base space for the `Casimir' connection of De Concini/Millson--Toledano Laredo (DMT)~\cite{millson_toledanolaredo_2005_casimir_operators_and_monodromy_representations_of_generalised_braid_groups,toledanolaredo_2002_a_kohno_drinfeld_theorem_for_quantum_weyl_groups}.
\end{remark*}

Then in~\eqref{eq:tensor_product_singular_modules} one can let both $\bm p \in C_n$ and $\bm \chi \in \prod_{j \in J} \bigl( \mf h^{\dual}_{r_j} \bigr)_{\reg}$ vary, getting a vector bundle over the base space~\eqref{eq:isomonodromy_times_space}.
This also comes with a canonical trivialisation, reasoning in the same way as for $\bm{\mc H} \subseteq \wh{\bm{\mc H}}$ (namely~\eqref{eq:vector_space_isomorphism_singular_module} is also independent of $\chi$).

Finally one may extend the sheaf $\mf g_{\ast \mc D}$ trivially along the Cartan directions, taking the pullback sheaf along the canonical projection $\bm{B} \twoheadrightarrow C_n$.

\section{Irregular blocks in terms of finite modules}
\label{sec:coinvariants}

Throughout this section fix a pair $(\bm p,\bm \chi)$ to define the spaces $\mc H \subseteq \wh{\mc H}$ as in~\eqref{eq:tensor_product_singular_modules}.
Then compose the inclusion $\mc H \hookrightarrow \wh{\mc H}$ with the canonical projection $\pi_{\ms H} \colon \wh{\mc H} \twoheadrightarrow \ms H$ to obtain a map $\iota \colon \mc H \to \ms H$.

To study the image of $\iota$ consider the tensor-product filtration
\begin{equation}
	\label{eq:tensor_product_filtration_negative_z_degree}
	\wh{\bm{\mc F}}^-_{\leq \bullet} \ceqq \bigotimes_{j \in J} \bigl( \wh{\mc F}^-_j \bigr)_{\leq \bullet} \, ,
\end{equation}
where $\bigl( \wh{\mc F}^-_j \bigr)_{\leq \bullet}$ is the filtration defined in \S~\ref{sec:filtrations} on $\wh W_j$.
By definition $\wh{\bm{\mc F}}^-_{\leq 0} = \mc H$, and we push~\eqref{eq:tensor_product_filtration_negative_z_degree} forward to a filtration $\wh{\ms{F}}^-_{\leq \bullet}$ on $\ms H$, along the surjection $\pi_{\ms H}$.
(Note that $\wh{\ms{F}}^-_{\leq \bullet}$ is exhaustive, since $\wh{\pmb{\mc F}}^-_{\leq \bullet}$ is.)

\begin{proposition}
	\label{prop:surjectivity_coinvariants}
	The map $\iota$ is surjective,
\end{proposition}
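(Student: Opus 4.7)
The plan is to induct on the exhaustive filtration $\wh{\ms{F}}^-_{\leq \bullet}$ on $\ms{H}$ pushed forward from the tensor-product filtration $\wh{\bm{\mc{F}}}^-_{\leq \bullet}$ on $\wh{\mc{H}}$. The base case is immediate since $\wh{\bm{\mc{F}}}^-_{\leq 0} = \mc{H}$ by construction. For the inductive step I would show that any element of $\wh{\bm{\mc{F}}}^-_{\leq k+1}$ is equivalent modulo $\mf{g}_{\ast D}(\Sigma)\wh{\mc{H}}$ to an element of $\wh{\bm{\mc{F}}}^-_{\leq k}$, which reduces to the inductive hypothesis.

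To perform the reduction, I would first use linearity and the PBW basis~\eqref{eq:pbw_basis_affine_singular_module} to restrict to a pure tensor of total filtration degree exactly $k+1$. Selecting any slot $j$ where the degree is positive and peeling off the leading factor of a basis monomial (whose leading negative exponent $\beta'_1$ is $\geq 1$ since $\bm{\beta}'$ is nonincreasing), I reduce to the case $\wh{\bm{w}} = (Y z_j^{-m-1})^{(j)} \wh{\bm{w}}'$ with $Y \in \mf{g}$, $m \geq 0$, and $\wh{\bm{w}}' \in \wh{\bm{\mc{F}}}^-_{\leq k-m}$. Next I would invoke a rational function $f \in \ms{O}_{\ast D}(\Sigma)$ whose Laurent expansion $\tau_j(f)$ at $p_j$ is $z_j^{-m-1} + g_j(z_j)$ with $g_j \in \wh{\ms{O}}_j$, and whose expansions $\tau_i(f)$ at the other marked points $p_i$ lie in $\wh{\ms{O}}_i$. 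Such a function is produced by a direct partial-fractions construction on $\mb{C}P^1$: in the generic case one takes $f(z) = (z - p_j)^{-m-1}$ in an affine chart containing all marked points, with an easy case split if $\infty$ is marked (e.g.\ $f(z) = (z-c)^{m+1}$ for some $c$ off the marked points when $p_j = \infty$).

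Applying the fundamental identity~\eqref{eq:identity_coinvariants} with this $f$ to the tensor $\wh{\bm{w}}'$ and rearranging yields
\begin{equation}
    \bigl[ \wh{\bm{w}} \bigr] = - \bigl[ (Y \otimes g_j)^{(j)} \wh{\bm{w}}' \bigr] - \sum_{i \neq j} \bigl[ (Y \otimes \tau_i(f))^{(i)} \wh{\bm{w}}' \bigr] \, .
\end{equation}
Each remaining action on the right-hand side is by an element of $\mf{g} \llbracket z \rrbracket$ acting on a single slot; by~\eqref{eq:negative_degree_filtration_shift} such actions preserve the filtration $\wh{\mc{F}}^-_{\leq \bullet}$ at each factor, so every term on the right-hand side lies in $\wh{\bm{\mc{F}}}^-_{\leq k - m} \subseteq \wh{\bm{\mc{F}}}^-_{\leq k}$. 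This closes the induction and establishes surjectivity of $\iota$.

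The only real technical point is the existence of the rational function $f$ with prescribed principal part at one marked point and holomorphicity at all the others: this is the classical Mittag--Leffler problem on $\mb{C}P^1$, solved by explicit partial fractions once the at-infinity case is handled separately. The remainder of the proof is a direct filtration-tracking calculation using the identities gathered in \S\ref{sec:bases_gradings_filtrations} and the defining identity~\eqref{eq:identity_coinvariants} of the coinvariant quotient.
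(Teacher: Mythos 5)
Your proof is correct and takes essentially the same approach as the paper's: induction on the $z$-degree filtration $\wh{\ms{F}}^{-}_{\leq \bullet}$, using the fundamental coinvariant identity~\eqref{eq:identity_coinvariants} with a rational function having a pole only at a single marked point, and tracking filtration degrees via~\eqref{eq:negative_degree_filtration_shift}. The only difference is cosmetic: you make explicit the peeling off of one leading loop-algebra factor from a PBW monomial and the Mittag--Leffler-type construction of $f$, both of which the paper handles more tersely.
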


\begin{proof}
	We will show that $\wh{\ms{F}}^-_{\leq k}$ lies in the image of $\iota$ by induction on $k \geq 0$.
	The base is given by $\wh{\ms{F}}^-_{\leq 0} = \pi_{\ms H} \bigl( \mc H \bigr)$.

	Now we use~\eqref{eq:identity_coinvariants} for a function $f_i \in \ms O_{\ast D}(\Sigma)$ with a pole at $p_i$, and only there.
	Such a function is e.g. defined by $f_i(z) = (z - t_i)^{-m}$, with the notations of Rmk.~\ref{rem:local_chart_configuration_space}, working in a local chart containing $\bm p$.

	Hence $\tau_j \bigl(f_i\bigr) \in \wh{\ms O}_j$ for $j \neq i$, and if $\wh{\bm w} \in \wh{\bm{\mc F}}^-_{\leq k}$ the rightmost identity of~\eqref{eq:negative_degree_filtration_shift} shows that the right-hand side of~\eqref{eq:identity_coinvariants} lies in $\wh{\ms{F}}^-_{\leq k}$.
	Then by induction the image of $\iota$ contains $\wh{\ms{F}}^-_{\leq k}$, as well as the vectors on the left-hand side of~\eqref{eq:identity_coinvariants}: the conclusion follows from the leftmost identity of~\eqref{eq:negative_degree_filtration_shift}.
\end{proof}

\begin{proposition}
	\label{prop:kernel_coinvariants}
	One has $\Ker (\iota ) = \mf g \mc H \subseteq \mc H$.
\end{proposition}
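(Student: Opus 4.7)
The inclusion $\mf{g}\mc{H} \subseteq \Ker(\iota)$ is immediate: the embedding of constants $\mf{g} \hookrightarrow \mf{g}_{\ast D}(\Sigma)$ acts on $\wh{\mc{H}}$ as the diagonal $X^{\Delta} = \sum_{j \in J} X^{(j)}$, which preserves $\mc{H}$, so $\mf{g}\mc{H} \subseteq \mc{H} \cap \mf{g}_{\ast D}(\Sigma) \wh{\mc{H}} = \Ker(\iota)$.

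For the converse I will identify $\ms{H}$ directly with $\mc{H} \big\slash \mf{g}\mc{H}$ via a Frobenius-reciprocity argument. Let $\wh{\mf{L}} \coloneqq \bigl( \bigoplus_{j \in J} \mc{L}\mf{g}_{j} \bigr) \oplus \mb{C} K$ denote the diagonal central extension, with the $K_{j}$'s of the different slots identified (each acting by the common level $\kappa$), and define the subalgebras
\begin{equation*}
    \wh{\mf{L}}_{+} \coloneqq \Bigl( \bigoplus_{j \in J} \mf{g} \llbracket z_{j} \rrbracket \Bigr) \oplus \mb{C} K \, , \qquad \wh{\mf{L}}_{-} \coloneqq \bigoplus_{j \in J} z_{j}^{-1} \mf{g} \bigl[ z_{j}^{-1} \bigr] \, .
\end{equation*}
Then $\wh{\mf{L}} = \wh{\mf{L}}_{+} \oplus \wh{\mf{L}}_{-}$ is a vector-space decomposition into Lie subalgebras (the cocycle~\eqref{eq:cocycle_affine_lie_algebra} vanishes on each summand by bidegree considerations). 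Combining the induction isomorphisms $\wh{W}_{j} \simeq \Ind_{U ( \mf{g} \llbracket z_{j} \rrbracket \oplus \mb{C} K_{j} )}^{U ( \wh{\mf{g}}_{j} )} W_{j}$ from \S~\ref{sec:setup} with the compatibility of induction and tensor products yields an isomorphism of $\wh{\mf{L}}$-modules $\wh{\mc{H}} \simeq \Ind_{U ( \wh{\mf{L}}_{+} )}^{U ( \wh{\mf{L}} )} \mc{H}$. Moreover the subalgebra $\tau \bigl( \mf{g}_{\ast D}(\Sigma) \bigr) \subseteq \wh{\mf{L}}$ satisfies $\mf{g}_{\ast D}(\Sigma) + \wh{\mf{L}}_{+} = \wh{\mf{L}}$ (by the Mittag--Leffler/partial-fractions surjection $\mf{g}_{\ast D}(\Sigma) \twoheadrightarrow \wh{\mf{L}} \big\slash \wh{\mf{L}}_{+} \simeq \wh{\mf{L}}_{-}$ on $\mb{C}P^{1}$) and $\mf{g}_{\ast D}(\Sigma) \cap \wh{\mf{L}}_{+} = \mf{g}$ (since the only meromorphic functions on $\mb{C}P^{1}$ regular at every marked point are constants).

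These properties allow the following Frobenius-reciprocity-type fact to apply: if $L = L_{+} \oplus L_{-}$ is a vector-space decomposition into Lie subalgebras, $N$ is an $L_{+}$-module, $M = \Ind_{L_{+}}^{L} N$, and $\mf{l} \subseteq L$ is a Lie subalgebra with $\mf{l} + L_{+} = L$, then the inclusion $n \mapsto 1 \otimes n$ induces an isomorphism $N_{\mf{l} \cap L_{+}} \xrightarrow{\sim} M_{\mf{l}}$ on coinvariants. Applying this here gives $\mc{H} \big\slash \mf{g}\mc{H} \xrightarrow{\sim} \ms{H}$; inspection shows the resulting arrow coincides with the one induced by $\iota$, so $\iota$ factors as $\mc{H} \twoheadrightarrow \mc{H} \big\slash \mf{g}\mc{H} \xrightarrow{\sim} \ms{H}$ and $\Ker(\iota) = \mf{g}\mc{H}$. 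The general fact itself reduces, via the PBW isomorphism $M \simeq U(L_{-}) \otimes N$, to a standard induction on the $U(L_{-})$-degree: for surjectivity, any $Y \in L_{-}$ lifts to some $X \in \mf{l}$ with $Y - X \in L_{+}$, whence $Y \otimes n \equiv - 1 \otimes (Y - X) n$ modulo $\mf{l} \cdot M$; for injectivity, one compares leading PBW-components of both sides of any relation $1 \otimes n = \sum_{i} X^{(i)} m^{(i)}$. \textit{The main obstacle} is the bookkeeping in this injectivity step, where $L_{+}$-elements must be carefully commuted across the $U(L_{-})$-factor, but no new input beyond the two structural properties of $\mf{g}_{\ast D}(\Sigma)$ inside $\wh{\mf{L}}$ is required.
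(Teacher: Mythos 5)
Your proof is correct in outline, but it takes a genuinely different route from the paper's. The paper proves the statement by a direct grading argument on the PBW basis: it uses the positive/negative $z$-degree gradings $\wh{\mc{F}}^{\pm}_{\bullet}$ of \S~\ref{sec:gradings}--\ref{sec:filtrations} to observe that a nonconstant $f \in \ms{O}_{\ast D}(\Sigma)$ strictly raises negative $z$-degree in the slot where it has a pole, hence takes $\wh{\mc{H}}$ off of $\mc{H} = \wh{\bm{\mc{F}}}^{-}_{\leq 0}$, while constant $f$ preserves each $\wh{\mc{F}}^{-}_{k}$ by~\eqref{eq:negative_degree_grading_shift}. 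Your approach instead realises $\wh{\mc{H}}$ as $\Ind_{U(\wh{\mf{L}}_{+})}^{U(\wh{\mf{L}})} \mc{H}$ for a triangular decomposition $\wh{\mf{L}} = \wh{\mf{L}}_{+} \oplus \wh{\mf{L}}_{-}$ of the ``adelic'' Lie algebra, and invokes a general coinvariants lemma together with the two structural facts $\tau\bigl(\mf{g}_{\ast D}(\Sigma)\bigr) + \wh{\mf{L}}_{+} = \wh{\mf{L}}$ (Mittag--Leffler on $\mb{C}P^{1}$) and $\tau\bigl(\mf{g}_{\ast D}(\Sigma)\bigr) \cap \wh{\mf{L}}_{+} = \mf{g}$ (Liouville). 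This is the standard ``propagation/strong residue'' mechanism from the CFT literature and is conceptually cleaner: it subsumes Prop.~\ref{prop:surjectivity_coinvariants} as well, since the lemma gives both surjectivity and the kernel at once, whereas the paper proves the two separately. The trade-off is that you have offloaded all the real work onto the general lemma, whose \emph{injectivity} direction (that $N \cap \mf{l}M = (\mf{l}\cap L_{+})N$, or dually that every $(\mf{l}\cap L_{+})$-invariant functional on $N$ extends to an $\mf{l}$-invariant one on $M$) is genuinely nontrivial; ``comparing leading PBW components'' does not close in one step, because the $L_{+}$-component of an element of $\mf{l}$ need not lie in $\mf{l}\cap L_{+}$, and commuting $L_{+}$-elements across $U(L_{-})$ scatters degrees. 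You acknowledge this as the main obstacle, which is fair, but a complete write-up would need to spell out that recursion (e.g.\ by dualising and constructing the extension degree by degree, checking well-definedness modulo the nonuniqueness of the splitting $Y = X + Z$, $X \in \mf{l}$, $Z \in L_{+}$). Once that lemma is in hand your argument is sound, and the ancillary identifications (algebraicity of $\wh{\mf{L}}_{-}$, vanishing of the cocycle~\eqref{eq:cocycle_affine_lie_algebra} on both $\wh{\mf{L}}_{\pm}$, transitivity of induction compatibly with tensor products, and the fact that the induced arrow agrees with $\iota$) all check out.
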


\begin{proof}
	To prove the nontrivial inclusion choose an element $\wh{\bm w} = \tau(X \otimes f) \wh{\bm{u}}$ with $\wh{\bm{u}} = \bigotimes_{j \in J} \wh{u}_j \in \wh{\mc H}$, $f \in \mf g_{\ast D}(\Sigma)$ and $X \in \mf g$.

	If the function $f$ is noncostant then it has a pole, say at $p_j \in \Sigma$.
	It follows that $\tau_j(f) \not\in \wh{\ms O}_j$, whence $X \otimes \tau_j (f)^{(j)} \wh{u}_j \not\in \bigl( \wh{\mc F}^+_j \bigr)_{\leq 0}$ by~\eqref{eq:negative_degree_grading_shift}, and $\wh{\bm w} \not\in \mc H = \wh{\bm{\mc F}}^+_{\leq 0}$.

	Thus to have element of the kernel we must restrict to $f \in \mb C$.
	Then using~\eqref{eq:negative_degree_grading_shift} again we see that $X \otimes f = X \otimes \tau_j(f) \in \mf g$ preserves the grading $\bigl( \wh{\mc F}^-_j \bigr)_{\bullet}$ on $\wh W_j$, so $(X \otimes f) \wh{u}_j \in W_j = \bigl( \wh{\mc F}^-_j \bigr)_0$ implies $\wh{u}_j \in W_j$.
\end{proof}

Hence there is an identification $\ms H \simeq \mc H _{\mf g} = \mc H \big\slash \mf g \mc H$, generalising the analogous standard fact for the tame case.

To go further one may appeal to the tensor product of the weight gradings of \S~\ref{sec:weight_gradings}, which is a $\bigl( \mf h^{\dual} \bigr)^{\! J}$-grading on $\mc H$.
Namely we consider the subspaces
\begin{equation}
	\label{eq:tensor_product_weight_grading}
	\mc F_{\bm{\mu}} = \mc F_{\bm{\mu}}(\mc H) \ceqq \bigotimes_{j \in J} \mc F_{\mu_j} (W_j) \subseteq \mc H \, , \qquad \text{for } \bm{\mu} = (\mu_j)_{j \in J} \in \bigl( \mf h^{\dual} \bigr)^{\! J} \, .
\end{equation}
By~\eqref{eq:action_meromorphic_functions} the subspace $\bm{\mc F}_{\bm{\mu}}$ lies inside the weight space of weight $\abs{\bm{\mu}} \ceqq \sum_j \mu_j \in \mf h^{\dual}$ for the tensor product $\mf h$-action.

If $\abs{ \bm{\mu}} \neq 0$ then $\bm{\mc F}_{\bm{\mu}} \subseteq \mf h \bm{\mc F}_{\bm{\mu}}$ is annihilated by $\pi_{\ms H}$, so there is still a surjective map
\begin{equation}
	\label{eq:surjection_conformal_block_weight_decomposition}
	\mc H \supseteq \bigoplus_{\abs{ \bm{\mu}} = 0} \bm{\mc F}_{\bm{\mu}} \xrightarrow{\pi_{\ms H}}{} \ms H \, ,
\end{equation}
and by construction the $\mf h$-action is trivialised on this subspace.

\begin{remark*}
	The condition $\abs{\bm{\mu}} = 0$ is also reminiscent of meromorphic connections: it is equivalent to the vanishing of the sum of the residues over $\Sigma$---in the duality~\eqref{eq:residue_pairing}.
\end{remark*}

\subsection{Auxiliary tame module}
\label{sec:auxiliary_tame_module}

Suppose one of the modules is tame, e.g. the last one: $r_n = 1$ and $W_n = V_n$.
Then we split the tensor product as
\begin{equation}
	\label{eq:tensor_product_without_infinity}
	\mc H = \mc H' \otimes V_n \, , \qquad \mc H' \ceqq \bigotimes_{j \in J'} W_j \, ,
\end{equation}
where $J' \ceqq J \setminus \Set{n}$, and we embed
\begin{equation}
	\mc H' \longrightarrow \mc H \, , \qquad \bigotimes_{j \in J'} \wh w_j \longmapsto \bigotimes_{j \in J'} \wh w_j \otimes v_n \, ,
\end{equation}
where $v_n \in V_n$ is the cyclic/highest-weight vector.

\begin{proposition}
	\label{prop:auxiliary_tame_module}
	One has $\iota({\mc H'}) = \ms H$.
\end{proposition}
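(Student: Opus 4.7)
The plan is to combine the identification $\ms{H} \simeq \mc{H}/\mf{g}\mc{H}$ from Prop.~\ref{prop:kernel_coinvariants} with the natural filtration $\mc{E}_{\leq \bullet}$ on $V_{n} = U(\mf{n}^{-}) v_{n}$ introduced in \S~\ref{sec:filtrations}, and prove by induction on $k \geq 0$ that the image in $\ms{H}$ of $\mc{H}' \otimes \mc{E}_{\leq k}$ is contained in $\iota(\mc{H}')$. Since $V_{n} = \bigcup_{k \geq 0} \mc{E}_{\leq k}$, combining this with Prop.~\ref{prop:surjectivity_coinvariants} will give $\iota(\mc{H}') = \iota(\mc{H}) = \ms{H}$.

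The base case $k = 0$ is immediate: $\mc{E}_{\leq 0} = \mb{C} v_{n}$, so $\mc{H}' \otimes \mc{E}_{\leq 0}$ is by definition the subspace $\mc{H}' \otimes \mb{C} v_{n} \subseteq \mc{H}$ which is the image of the embedding $\bm{w} \mapsto \bm{w} \otimes v_{n}$.

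For the inductive step, I would use the shift identity~\eqref{eq:constant_filtration_shift}, which reduces the problem to handling vectors of the form $\bm{w} \otimes Y v$ with $\bm{w} \in \mc{H}'$, $Y \in \mf{n}^{-}$, and $v \in \mc{E}_{\leq k}$. The key trick is to write this as $Y^{(n)}(\bm{w} \otimes v)$ and to exploit that $Y \in \mf{g}$ sits inside $\mf{g}_{\ast D}(\Sigma)$ as a constant global function, so by~\eqref{eq:identity_coinvariants} the sum $\sum_{j \in J} Y^{(j)}(\bm{w} \otimes v)$ vanishes in $\ms{H}$. This yields
\begin{equation}
    \bigl[ \bm{w} \otimes Y v \bigr] = -\sum_{j \in J'} \bigl[ (Y^{(j)} \bm{w}) \otimes v \bigr] \in \ms{H},
\end{equation}
and since $Y$ acts on each finite module $W_{j}$ through the constant part $\mf{g} \subseteq \mf{g}_{r_{j}}$, the vector $Y^{(j)} \bm{w}$ still belongs to $\mc{H}'$. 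Applying the inductive hypothesis to $v \in \mc{E}_{\leq k}$ then closes the loop.

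I do not anticipate any real obstacle: the argument is essentially the classical one for tame conformal blocks, and the only point worth checking is that moving one copy of $Y$ out of the $n$-th slot onto the other factors keeps everything inside $\mc{H}'$, which follows from the structural fact that the constant subalgebra $\mf{g}$ acts on each singular module $W_{j}$ regardless of its depth $r_{j}$. The possibly irregular nature of the remaining tensor factors $W_{j}$ for $j \in J'$ plays no role here, since the reduction takes place entirely within the tame slot.
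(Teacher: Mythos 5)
Your proof is correct and follows essentially the same route as the paper's: induction on the filtration $\mc{E}_{\leq \bullet}$ of the tame factor $V_n$, moving the constant action $Y^{(n)}$ to the other slots via the coinvariance relation~\eqref{eq:identity_coinvariants}, and closing the induction with the shift identity~\eqref{eq:constant_filtration_shift}. If anything your write-up is slightly more careful than the paper's at the inductive step, making explicit that the inductive hypothesis is applied to the shifted vectors $Y^{(j)}\bm{w} \otimes v \in \mc{H}' \otimes \mc{E}_{\leq k}$ rather than to $\bm{w} \otimes v$ alone.
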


\begin{proof}
	Denote by $\mc{E}^{(n)}_{\leq \bullet}$ the filtration on $U(\mf g) v_n \subseteq V_n$ defined in \S~\ref{sec:filtrations}, which is exhaustive in this (tame) case.
	We will prove by induction on $k \geq 0$ that $\iota \bigl( \mc H'\bigr)$ contains the classes of all vectors inside $\mc H' \otimes \mc{E}^{(n)}_{\leq k}$, noting that the base follows from the identity $\mc{E}^{(n)}_{\leq 0} = \mb C v_n$.

	For the inductive step we use the constant version of~\eqref{eq:identity_coinvariants}.
	For $X \in \mf g$ this shows that the class of $X^{(n)} \wh{\bm w}$ lies in $\iota \bigl( \mc H' \bigr)$ as soon as that of $\wh{\bm w} \in \mc H' \otimes \mc{E}^{(n)}_{\leq k}$ does, which is precisely the inductive hypothesis: the conclusion follows from~\eqref{eq:constant_filtration_shift}.
\end{proof}

Now $\mb C v_n = \mc F_{\lambda_n} (V_n)$, so~\eqref{eq:surjection_conformal_block_weight_decomposition} yields a surjection:
\begin{equation}
	\label{eq:surjection_weight_spaces}
	\mc H' \supseteq \bigoplus_{\abs{\bm{\mu}} = -\lambda_n} \bm{\mc F}'_{\bm{\mu}} \xrightarrow{\pi_{\ms H}} \ms H \, , \qquad \text{where } \bm{\mu} = (\mu_j)_{j \in J'} \in \bigl( \mf h^{\dual} \bigr)^{\! J'} \, ,
\end{equation}
writing $\abs{\bm{\mu}} = \sum_{j \in J'} \mu_j\in \mf h^{\dual}$ analogously to the above, and where $\bm{\mc F}'_{\bm{\mu}} \subseteq \mc H'$ is the tensor product of the weight-gradings over $J' \subseteq J$ (analogously to~\eqref{eq:tensor_product_weight_grading}).
Note the direct sum is just the weight space of weight $-\lambda_n \in \mf h^{\dual}$ for the (tensor) action of $\mf h$ on $\mc H'$; let us temporarily denote this space by $\mc H'(-\lambda_n)$.

\begin{lemma}
	\label{lem:final_identification_coinvariants}
	The kernel of~\eqref{eq:surjection_weight_spaces} equals $\mf n^+ \mc H' \cap \mc H'(-\lambda_n) \subseteq \mc H'(-\lambda_n)$.
\end{lemma}

\begin{proof}
	We must show that no coinvariants can arise from the residual $\mf n^-$-action.

	To this end recall that $\mf n^-$ has nontrivial action on the associated graded of the filtration $\mc{E}_{\leq \bullet}$ of \S~\ref{sec:filtrations}: more precisely~\eqref{eq:constant_grading_shift} yields
	\begin{equation*}
		\mf n^- \Bigl( \mc H' \otimes \gr \bigl(\mc{E}^{(n)}\bigr)_k \Bigr) \subseteq \Bigl( \mc H' \otimes \gr \bigl(\mc{E}^{(n)}\bigr)_k \Bigr) \oplus \Bigl( \mc H' \otimes \gr \bigl(\mc{E}^{(n)}\bigr)_{k+1} \Bigr) \subseteq \mc H \, ,
	\end{equation*}
	for $k \in \mb Z_{\geq 0}$; but there can be no vanishing of components in the latter direct summand since $V_n$ is freely generated over $U(\mf n^-)$, and this applies in particular to $v_n \in \mc{E}^{(n)}_{\leq 0} \simeq \gr \bigl(\mc{E}^{(n)}_0 \bigr)$.
\end{proof}

The punchline is the final identification
\begin{equation}
	\label{eq:final_identification_coinvariants}
	\ms H \simeq \mc H'(-\lambda_n) \big\slash \bigl(\mf n^+ \mc H' \cap \mc H'(-\lambda_n) \bigr) \, .
\end{equation}

\subsubsection{On dimensions}
\label{sec:dimension_coinvariants}

To go further we use the results of \S~\ref{sec:weight_gradings}; in particular we employ the notation $\mc F_{\nu} (W_j) \ceqq \mc F_{\lambda_j - \nu} \subseteq W_j$ for $\nu \in Q^+$---i.e. we parametrise the weights $\mu_j = \lambda_j - \nu \preceq \lambda_j$ by $\nu \in Q^+$.

By definition the weight space of weight $-\lambda_n \in \mf h^{\dual}$ for the $\mf h$-action on $\mc H'$, denoted by $\mc H'(-\lambda_n)$ above, is the direct sum of the spaces $\bm{\mc F}'_{\bm{\nu}} \subseteq \mc H'$ such that $0 = \lambda_n + \sum_{j \in J'} (\lambda_j - \nu_j)$.
Thus, only elements such that $\abs{\bm{\nu}} = \abs{\bm \lambda} \in \mf h^{\dual}$ will contribute to coinvariants.
This actually depends on the sum of the tame parts of the singular characters, and so we now rather employ the following notation:
\begin{equation}
	\label{eq:zero_weight_space_tensor_product}
	\mc H'_{\abs{\bm \lambda}} \ceqq \bigoplus_{\abs{\bm{\nu}} = \, \abs{\bm \lambda}} \bm{\mc F}'_{\bm{\nu}} \subseteq \mc H' \, .
\end{equation}

\begin{proposition}
	\label{prop:dimension_weight_space_tensor_product}
	The $\mf h$-weight space $\mc H'_{\abs{\bm \lambda}} \subseteq \mc H'$ has dimension
	\begin{equation}
		\label{eq:dimension_weight_space_tensor_product}
		\dim \bigl(\mc H'_{\abs{\bm \lambda}} \bigr) = \sum_{ \abs{\bm{\nu}} = \, \abs{\bm \lambda}} \Biggl( \, \prod_{j \in J'} \sum_{\bm m \in \Mult_{\mc R^+}(\nu_j)} \binom{\bm m+r_j-1}{\bm m} \Biggr) < \infty \, .
	\end{equation}
\end{proposition}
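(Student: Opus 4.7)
The plan is to combine the weight decomposition of each individual finite singular module (established in Lem.~\ref{lem:highest_weight}) with the tensor product structure, and then invoke Prop.~\ref{prop:finite_dimensional_weight_spaces} on each factor.

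First I would record the decomposition of each factor. By Lem.~\ref{lem:highest_weight} and the reparametrisation $\mc{F}_{\nu}(W_{j}) = \mc{F}_{\lambda_{j} - \nu}$ for $\nu \in Q^{+}$, one has $W_{j} = \bigoplus_{\nu_{j} \in Q^{+}} \mc{F}_{\nu_{j}}(W_{j})$; the associated tensor product $\bigl( Q^{+} \bigr)^{J'}$-grading of $\mc{H}'$ reads $\mc{H}' = \bigoplus_{\bm{\nu}} \bm{\mc{F}}'_{\bm{\nu}}$, with $\bm{\mc{F}}'_{\bm{\nu}} = \bigotimes_{j \in J'} \mc{F}_{\nu_{j}}(W_{j})$. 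Each summand $\bm{\mc{F}}'_{\bm{\nu}}$ is contained in the $\mf{h}$-weight space of weight $\sum_{j \in J'} (\lambda_{j} - \nu_{j})$, which coincides with $-\lambda_{n}$ precisely when $\abs{\bm{\nu}} = \abs{\bm{\lambda}}$. This identifies $\mc{H}'_{\abs{\bm{\lambda}}}$ with the direct sum over those $\bm{\nu}$, as in~\eqref{eq:zero_weight_space_tensor_product}.

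Next I would take dimensions. Since each $\mc{F}_{\nu_{j}}(W_{j})$ is finite-dimensional by Prop.~\ref{prop:finite_dimensional_weight_spaces}, we have $\dim \bm{\mc{F}}'_{\bm{\nu}} = \prod_{j \in J'} \dim \mc{F}_{\nu_{j}}(W_{j})$, and substituting the formula~\eqref{eq:dimension_weight_space} produces the displayed expression~\eqref{eq:dimension_weight_space_tensor_product}.

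The only genuine point to settle is finiteness of the outer sum. If $\abs{\bm{\lambda}} \not\in Q^{+}$ the sum is empty, so assume $\abs{\bm{\lambda}} = \sum_{\theta \in \Pi} N_{\theta} \theta$ with $N_{\theta} \in \mb{Z}_{\geq 0}$. Expanding $\nu_{j} = \sum_{\theta} n_{j,\theta} \theta$ in the basis of simple roots, the constraint $\abs{\bm{\nu}} = \abs{\bm{\lambda}}$ becomes $\sum_{j \in J'} n_{j,\theta} = N_{\theta}$ for every $\theta \in \Pi$, which admits only finitely many nonnegative integer solutions; thus only finitely many tuples $\bm{\nu}$ contribute. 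This is essentially the only step where care is needed, but it is immediate once one passes to the simple-root basis. The rest is bookkeeping, since both the weight decomposition and the dimension formula for single factors are already available from the earlier sections.
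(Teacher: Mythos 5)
Your proof is correct and follows essentially the same route as the paper's: take products of the single-factor dimensions from Prop.~\ref{prop:finite_dimensional_weight_spaces}, then note finitely many tuples $\bm{\nu}$ satisfy the constraint $\abs{\bm{\nu}} = \abs{\bm{\lambda}}$. Your expansion in the simple-root basis to establish finiteness is a slightly more explicit rendering of the paper's remark that this is ``analogously to $\abs{\Mult_{\mc{R}^{+}}(\nu)} < \infty$'', but it is the same idea.
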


\begin{proof}
	This follows from~\eqref{eq:dimension_weight_space}, taking the products of the dimensions of the weight spaces $\mc F_{\nu_j} \subseteq W_j$.

	The dimension is finite since for $\nu \in Q_+$ there are finitely many $J'$-tuples $\bm{\nu} \in \bigl( Q^+ \bigr)^{J'}$ such that $\abs{\bm{\nu}} = \nu$---analogously to $\abs{\Mult_{\mc R^+} (\nu)} < \infty$.
\end{proof}

We deduce the following.

\begin{corollary}
	\label{cor:finite_dimensional_irregulal_conformal_blocks}
	If one module is tame then the space~\eqref{eq:irregular_conformal_blocks_space} is finite-dimensional for all choices of marked points and singular characters.
\end{corollary}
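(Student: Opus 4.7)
The plan is to assemble into one chain of surjections the three results already established in this section. Under the hypothesis that some tame module, say $W_n = V_n$, appears in the tensor product, the starting point is Prop.~\ref{prop:auxiliary_tame_module}: the subspace $\mc{H}' \subseteq \mc{H}$ obtained by freezing the $n$-th slot to the cyclic vector $v_n \in V_n$ already surjects onto $\ms{H}$ under $\iota$.

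Next I would cut $\mc{H}'$ further using the $\mf{h}$-weight decomposition of \S\ref{sec:weight_gradings}. Because $\mf{h} \subseteq \mf{g}$ acts trivially on coinvariants, the argument surrounding~\eqref{eq:surjection_conformal_block_weight_decomposition} shows that only the weight space $\mc{H}'_{\abs{\bm{\lambda}}}$ defined in~\eqref{eq:zero_weight_space_tensor_product} can survive the projection $\pi_{\ms{H}}$, yielding a surjection $\mc{H}'_{\abs{\bm{\lambda}}} \twoheadrightarrow \ms{H}$.

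The conclusion is then immediate from Prop.~\ref{prop:dimension_weight_space_tensor_product}, which exhibits $\mc{H}'_{\abs{\bm{\lambda}}}$ as a finite-dimensional direct sum of finite-dimensional weight spaces, with dimension bounded by~\eqref{eq:dimension_weight_space_tensor_product}. Crucially the bound depends only on the tame parts $\lambda_j$ and the depths $r_j$, not on the marked points $\bm{p}$ or on the wild parts $q_j$, so the estimate is uniform in the remaining parameters.

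The only delicate point lies inside the supporting Prop.~\ref{prop:dimension_weight_space_tensor_product} rather than in the corollary itself: the outer sum over multi-indices $\bm{\nu} \in (Q^{+})^{J'}$ with $\sum_{j \in J'} \nu_j = \abs{\bm{\lambda}}$ must be finite. This follows because, expressing $\abs{\bm{\lambda}}$ in the simple-root basis (and noting that the sum is empty unless $\abs{\bm{\lambda}} \in Q$), each $\nu_j \in Q^{+}$ must have non-negative simple-root coordinates bounded componentwise by those of $\abs{\bm{\lambda}}$, leaving only finitely many admissible tuples; and each inner sum over $\Mult_{\mc{R}^{+}}(\nu_j)$ is in turn finite by Prop.~\ref{prop:finite_dimensional_weight_spaces}.
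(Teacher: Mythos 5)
Your proposal is correct and follows the same route the paper intends but leaves implicit: compose the surjection of Prop.~\ref{prop:auxiliary_tame_module} with the restriction to the weight space $\mc{H}'_{\abs{\bm{\lambda}}}$ of~\eqref{eq:zero_weight_space_tensor_product}, then invoke the finite-dimensionality from Prop.~\ref{prop:dimension_weight_space_tensor_product}. Your added remark on why the outer sum over $\bm{\nu}$ is finite is exactly the argument the paper gives in the proof of Prop.~\ref{prop:dimension_weight_space_tensor_product}, and your observation that the bound is uniform in the marked points and wild parts is a correct and worthwhile reading of~\eqref{eq:dimension_weight_space_tensor_product}.
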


In particular the weight space is trivial if $\abs{\bm \lambda} \not\in Q^+$, and the simplest nontrivial case is when $\abs{\bm \lambda} = 0$.
Then $\abs{\bm{\nu}} = 0$ implies $\nu_j = 0$ for $j \in J'$, so $\mc H'_0$ is the line generated by the tensor product $\bigotimes_{i \in J'} w_i$ of the cyclic vectors $w_i \in W_i$.

The next nontrivial example is when $\abs{\bm \lambda} = \theta \in \Pi$ is a simple root.
Now $\abs{\bm{\nu}} = \theta$ implies $\bm{\nu} \in \Set{ \bm{\nu}^{\theta,i} }_i$, with $\nu^{\theta,i}_j = \delta_{ij} \theta$ for $i,j \in J'$.
Then one finds the singleton $\Mult_{\mc R^+} \bigl( \nu^{\theta,i}_j \bigr) = \Set{ \delta_{ij} \bm m^{\theta}}$, with $m^{\theta}_{\alpha} = \delta_{\alpha\theta}$.
On the whole~\eqref{eq:dimension_weight_space_tensor_product} reduces to
\begin{equation}
	\dim \bigl( \mc H'_{\theta} \bigr) = \sum_{i \in J'} \Biggl( \,  \prod_{j \in j'} \binom{\delta_{ij} \bm m^{\theta} + r_j - 1}{\delta_{ij}\bm m^{\theta}} \Biggr) = \sum_{i \in J'} \binom{\bm m^{\theta}+r_i-1}{\bm m^{\theta}} = \sum_{i \in J'} r_i \, ,
\end{equation}
independently of the choice of simple root.

A basis is given by the pure tensors
\begin{equation}
	\wh{\bm w}^j_i \ceqq \bigotimes_{k = 1}^{i-1} w_k \otimes F_{\theta} z^j w_i \otimes \bigotimes_{k = i+1}^{n-1} w_k \, ,
\end{equation}
for $i \in J'$ and $j \in \Set{ 0,\dc,r_i-1}$.

\begin{remark}
	\label{rem:nontriviality}
	One way to ensure coinvariants are nontrivial is the following: for a given configurations of points $\bm p = (p_j)_{j \in J}$ consider the Lie subalgebra of $\mf g$-valued meromorphic functions with poles at $p_j$, and further with a \emph{zero} elsewhere, say at $p' \in \Sigma \setminus \set{ p_j}_j$.
	Then the proof of Prop.~\ref{prop:surjectivity_coinvariants} can easily be adapted working in the chart where $p' = \infty$---as the function $f_i(z) = (z - t_i)^{-m}$ vanishes at infinity.

	Thus there is still a surjection of $\mc H$ on the space of coinvariants, and similarly to Prop.~\ref{prop:kernel_coinvariants} only constant functions lie in the kernel.
	Hence in this setup the kernel is trivial and $\mc H \neq (0)$ itself is the space of coinvariants.

	Another way to ensure nontriviality is to put a $\theta$-dual module in the tensor product (introduced in \S~\ref{sec:dual_modules}).
	Further when it is tame then one still has a finite-dimensional space, see \S~\ref{sec:irregular_conformal_blocks_space_with_duals}.
\end{remark}

\subsubsection{Archetypal case}

Consider the same setup of \S~\ref{sec:archetypal_case} for $\mf g = \mf{sl}(2,\mb C)$.
In this case $\abs{\bm \lambda} = m \alpha$ for an integer $m \geq 0$.

\begin{proposition}
	One has
	\begin{equation}
		\label{eq:dimension_weight_space_tensor_product_sl_2}
		\dim \bigl( \mc H'_{m \alpha} \bigr) = \binom{m + R - 1}{m} \, , \qquad \text{where } R \ceqq \sum_{j \in J'} r_j \, .
	\end{equation}
	A basis is provided by the pure tensors
	\begin{equation}
		\label{eq:basis_weight_space_sl_2_tensor_product}
		\wh{\bm w}^{\Phi} = \bigotimes_{j \in J'} \Biggl( \, \prod_{i = 0}^{r_j-1} \bigl( F z^i \bigr)^{\Phi(i,j)} w_j \Biggr) \, ,
	\end{equation}
	where $\Phi \in \WComp_{R} (m)$---identifying $\Set{1,\dc,R} \simeq \coprod_{j \in J'} \Set{ 0,\dc, r_j-1}$.
\end{proposition}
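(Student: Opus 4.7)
The plan is to specialise Prop.~\ref{prop:dimension_weight_space_tensor_product} to $\mf{g} = \mf{sl}(2,\mb{C})$ and then reduce the resulting sum by a standard combinatorial identity.

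First, since $\mc{R}^{+} = \Set{\alpha}$, an element $\nu_{j} \in Q^{+}$ is just $m_{j} \alpha$ for a unique $m_{j} \in \mb{Z}_{\geq 0}$, and $\Mult_{\mc{R}^{+}} (m_{j}\alpha) = \Set{(m_{j})}$ is a singleton. Hence the inner sum in~\eqref{eq:dimension_weight_space_tensor_product} collapses to $\binom{m_{j}+r_{j}-1}{m_{j}}$, and the condition $\abs{\bm{\nu}} = m\alpha$ becomes $\sum_{j \in J'} m_{j} = m$. So I would rewrite
\begin{equation}
    \dim \bigl( \mc{H}'_{m\alpha} \bigr) = \sum_{\substack{\bm{m} \in \mb{Z}_{\geq 0}^{J'} \\ \abs{\bm{m}} = m}} \prod_{j \in J'} \binom{m_{j}+r_{j}-1}{m_{j}} \, .
\end{equation}

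Next, I would identify this sum with $\binom{m+R-1}{m}$ either via generating functions---comparing the coefficient of $x^{m}$ in the identity $\prod_{j \in J'} (1-x)^{-r_{j}} = (1-x)^{-R}$---or more combinatorially by observing that $\binom{m_{j}+r_{j}-1}{m_{j}}$ counts weak $r_{j}$-compositions of $m_{j}$, so the product counts pairs (splitting of $m$ across $J'$, composition of each $m_{j}$ of length $r_{j}$), which are in bijection with weak $R$-compositions of $m$ under the identification $\Set{1,\dotsc,R} \simeq \coprod_{j \in J'} \Set{0,\dotsc,r_{j}-1}$.

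Finally, for the explicit basis I would use the archetypal description~\eqref{eq:basis_weight_space_sl_2}: a basis of $\mc{F}_{m_{j} \alpha}(W_{j})$ is indexed by $\varphi_{j} \in \WComp_{r_{j}}(m_{j})$, with basis vector $\prod_{i=0}^{r_{j}-1} (Fz^{i})^{\varphi_{j}(i)} w_{j}$. Taking tensor products over $j \in J'$ and packaging the data $(\bm{m},(\varphi_{j})_{j \in J'})$ as a single $\Phi \in \WComp_{R}(m)$ via the above identification yields precisely the vectors $\wh{\bm{w}}^{\Phi}$ of~\eqref{eq:basis_weight_space_sl_2_tensor_product}. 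These lie in distinct elements of the PBW basis in each slot, so their tensor products are linearly independent, and their count matches the dimension just computed. The only mildly subtle step is the bijective reindexing in the last paragraph, but once one fixes any order on $J'$ compatible with the canonical identification, there is nothing more to verify.
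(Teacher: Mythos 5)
Your proposal is correct and follows essentially the same route as the paper: in both cases the key step is the bijection between pairs $\bigl(\text{weak }J'\text{-composition }\phi\text{ of }m, \ (\varphi_{j})_{j}\text{ with }\varphi_{j} \in \WComp_{r_{j}}(\phi(j))\bigr)$ and a single weak $R$-composition $\Phi \in \WComp_{R}(m)$, which simultaneously gives the count and the explicit basis. The paper sets up this bijection directly on basis vectors rather than first passing through the sum $\sum_{|\bm{m}|=m} \prod_{j} \binom{m_{j}+r_{j}-1}{m_{j}}$ and then invoking the Vandermonde-type identity, but that is only a difference in presentation, not in substance.
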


\begin{proof}
	Fix an integer $m \geq 0$ and look for $\bm{\nu} \in (\mb Z_{\geq 0} \alpha)^{J'}$ satisfying $\abs{\bm{\nu}} = m \alpha$.
	Such elements are given by weak $J'$-compositions of $m$, i.e. functions $\phi \colon J' \to \mb Z_{\geq 0}$ satisfying $\sum_{j \in J'} \phi(j) = m$, with bijection
	\begin{equation}
		\phi \longmapsto \bm{\nu}^{\phi} \, , \qquad \nu^{\phi}_j \ceqq \phi(j) \alpha \, .
	\end{equation}
	Then by definition $\Mult_{\mc R^+} \bigl( \nu^{\phi}_j \bigr) = \set{ \phi(j) }$ for $j \in J'$, so we need only give elements $\varphi_j \in \WComp_{r_j} \bigl( \phi(j) \bigr)$ to allocate the $z$-degrees of the occurrences of $-\alpha$ at each slot of the tensor product.

	The data of $\phi$ and $\bm \varphi = (\varphi_j)_j$ is equivalent to that of the weak $R$-composition $\Phi \colon R \to \mb Z_{\geq 0}$ defined by $\Phi(i,j) = \varphi_j(i)$, and the result follows.
\end{proof}

\begin{remark*}
	In the tame case~\eqref{eq:dimension_weight_space_tensor_product_sl_2} simplifies to
	\begin{equation}
		\dim \bigl( \mc H'_{m\alpha} \bigr) = \abs{\WComp_{J'} (m)} = \binom{m + \abs{J'} - 1}{m} \, ,
	\end{equation}
	and a basis is given by the pure tensors
	\begin{equation}
		\wh{\bm v}^{\phi} = \bigotimes_{j \in J'} F^{\phi(j)} v_j \, \qquad \text{for } \phi \in \WComp_{J'} (m) \, .
	\end{equation}
	(This is somehow the opposite of~\eqref{eq:basis_weight_space_sl_2}: there we had an arbitrary singular module, here we have a tensor product of arbitrarily many tame modules.)
\end{remark*}

\section{Irregular blocks: second version}
\label{sec:irregular_conformal_blocks_space_with_duals}

We now vary the setup of \S~\ref{sec:irregular_conformal_blocks}, giving a special role to one of the marked points (e.g. the last one): choose then a $(\cdot \mid \cdot)$-orthogonal morphism $\theta \colon \mf g \to \mf g^{\op}$, and put at the last marked point a $\theta$-dual module $\wh W'_{\theta} \twoheadrightarrow W'_{\theta}$---as in \S~\ref{sec:dual_modules}.

In this case the tensor product splits as
\begin{equation}
	\label{eq:tensor_product_singular_modules_with_dual}
	\wh{\mc H} = \wh W'_n \otimes \bigotimes_{j \in J'} \wh W_j \, , \qquad \mc H = W'_n \otimes \bigotimes_{j \in J'} W_j \, ,
\end{equation}
where $J' = J \setminus \Set{n}$ as in \S~\ref{sec:auxiliary_tame_module}---and omitting the subscript $\theta$.
These are naturally subspaces of $\Hom( \wh W_n,\wh{\mc H}' )$ and $\Hom( W_n,\mc H')$, respectively, where $\mc H'$ is as in \S~\ref{sec:auxiliary_tame_module} and $\wh{\mc H}' \ceqq \bigotimes_{j \in J'} \wh W_j$.
Moreover they still assemble into trivial vector bundles $\wh{\bm{\mc H}} \twoheadrightarrow \bm{\mc H}$ over the space $C_n = \Conf_n(\Sigma)$---but also over the full space~\eqref{eq:isomonodromy_times_space} of isomonodromy times.

The Lie algebra of $\mf g$-valued meromorphic functions on $\Sigma$ acts on the leftmost tensor product of~\eqref{eq:tensor_product_singular_modules_with_dual}.
Namely, for any linear map $\wh{\bm{\psi}} \colon \wh W_n \to \wh{\mc H}'$, using~\eqref{eq:action_meromorphic_functions} and the dual actions of \S~\ref{sec:dual_modules} yields
\begin{equation}
	\Braket{ \tau(X \otimes f ) \wh{\bm{\psi}} | \wh w } = \sum_{j \in J'} \bigl( X \otimes \tau_j(f) \bigr)^{(j)} \Braket{ \wh{\bm{\psi}} | (\theta(X) \otimes \tau_n(f) ) \wh w } \in \wh{\mc H}' \, ,
\end{equation}
where $X \in \mf g$, $f \in \ms O_{\ast D}(\Sigma)$ and $\wh w \in \wh W_n$.
Taking coinvariants of the resulting left module yields a second version of a space of irregular covacua, still denoted by $\ms H$: then again the space of vacua $\ms H^{\dagger}$ is defined as in~\eqref{eq:vacua}.
Moreover the material of \S~\ref{sec:irregular_conformal_blocks} goes through, and there is an action of the sheaf of Lie algebras $\mf g_{\ast \mc D}$ on sections of $\wh{\bm{\mc H}}$ and $\bm{\mc H}$.

\subsection{On coinvariants}

Consider first the natural inclusion $\iota \colon \wh W'_n \otimes \mc H' \hookrightarrow \wh{\mc H}$, which can be composed with the canonical projection $\pi_{\ms H} \colon \wh{\mc H} \to \ms H$.

Reasoning as in Prop.~\ref{prop:surjectivity_coinvariants} (which may be thought of as the case $\wh W'_n = \mb C$) shows that this composition is surjective.
Then reasoning as in Prop.~\ref{prop:kernel_coinvariants} shows that the kernel is obtained from the action of meromorphic functions with no poles at $\Set{ p_1,\dc,p_{n-1} } \subseteq \Sigma$, but only (at most) at the point $p_n$.
Hence there is a $\mb C$-linear isomorphism
\begin{equation}
	\ms H \simeq \wh W'_n \otimes \mc H' \big\slash \mf g_{\ast p_n} (\Sigma) \bigl( \wh W'_n \otimes \mc H' \bigr) \, ,
\end{equation}
thinking of $p_n \in \Sigma$ as a divisor.

Now a function with a pole at most at $p_n$ is either constant, or its Laurent expansion at $p_n$ lies in $z_n^{-1} \mf g \bigl[ z_n^{-1} \bigr] \subseteq (\mc L\mf g)_n$, where as usual $z_n$ is a local coordinate on $\Sigma$ vanishing at $p_n$.
Hence a coinvariant function is uniquely determined by its restriction to $W_n \subseteq \wh W_n$, and since now poles are not allowed we get the following:

\begin{proposition}
	\label{prop:coinvariants_with_dual}
	There is a canonical isomorphism of vector spaces
	\begin{equation}
		\ms H \simeq W'_n \otimes \mc H' \bigl\slash \mf g ( W'_n \otimes \mc H' ) \, .
	\end{equation}
\end{proposition}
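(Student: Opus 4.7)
\emph{Strategy.} Continuing from the intermediate identification $\ms{H} \simeq (\wh{W}'_{n} \otimes \mc{H}')/\mf{g}_{\ast p_{n}}(\Sigma)(\wh{W}'_{n} \otimes \mc{H}')$ established in the preceding paragraph, the plan is to show that the natural composition $\alpha \colon W'_{n} \otimes \mc{H}' \hookrightarrow \wh{W}'_{n} \otimes \mc{H}' \twoheadrightarrow \ms{H}$ is surjective with kernel exactly $\mf{g}(W'_{n} \otimes \mc{H}')$. The inclusion on the left uses the natural embedding of $W'_{n}$ as the degree-$0$ graded piece of $\wh{W}'_{n}$ from the end of \S~\ref{sec:dual_modules}. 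The argument mimics Props.~\ref{prop:surjectivity_coinvariants} and~\ref{prop:kernel_coinvariants}, with the pair $\wh{W}_{j} \supseteq W_{j}$ replaced by $\wh{W}'_{n} \supseteq W'_{n}$ and the slot-$n$ action twisted by $\theta$.

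\emph{Surjectivity.} I would use the filtration $\widetilde{\mc{F}}^{-}_{\leq \bullet}$ on $\wh{W}'_{n}$ (with $\widetilde{\mc{F}}^{-}_{\leq 0} = W'_{n}$ and satisfying the analog of~\eqref{eq:negative_degree_filtration_shift}), extended to $\mc{G}_{\leq k} \coloneqq \widetilde{\mc{F}}^{-}_{\leq k} \otimes \mc{H}'$, and induct on $k$. In the inductive step, decompose a generator of $\mc{G}_{\leq k+1}$ as $(Y \otimes z_{n}^{-m-1})^{(n)} \psi \otimes \bm{v}$ with $\psi \otimes \bm{v} \in \mc{G}_{\leq k-m}$, pick any $f \in \ms{O}_{\ast p_{n}}(\Sigma)$ with $\tau_{n}(f) = z_{n}^{-m-1}$, and apply the analog of~\eqref{eq:identity_coinvariants} to equate its class in $\ms{H}$ with $-\sum_{j \in J'} [\psi \otimes (Y \otimes \tau_{j}(f))^{(j)} \bm{v}] \in \pi_{\ms{H}}(\mc{G}_{\leq k - m})$, since $\tau_{j}(f) \in \wh{\ms{O}}_{j}$ preserves $W_{j}$ and leaves slot $n$ untouched.

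\emph{Kernel and main obstacle.} The inclusion $\mf{g}(W'_{n} \otimes \mc{H}') \subseteq \ker \alpha$ is immediate, as the diagonal $\mf{g}$-action is $\tau(\mf{g} \otimes 1) \subseteq \mf{g}_{\ast p_{n}}(\Sigma)$. The reverse inclusion is the main obstacle, for which I would construct a well-defined inverse $\beta \colon \ms{H} \to (W'_{n} \otimes \mc{H}')/\mf{g}(W'_{n} \otimes \mc{H}')$ by iteration: for $\bm{u} \in \wh{W}'_{n} \otimes \mc{H}'$, recursively apply the reduction above to produce a representative $R(\bm{u}) \in W'_{n} \otimes \mc{H}'$, and set $\beta([\bm{u}]) \coloneqq [R(\bm{u})]$. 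Well-definedness hinges on two compatibility checks: (i) the function $f$ with prescribed $\tau_{n}(f) = z_{n}^{-m-1}$ is unique only up to an additive constant $c \in \mb{C}$, and two choices modify $R(\bm{u})$ by the $\tau$-image of $Y \otimes c$, which lies in $\mf{g}(W'_{n} \otimes \mc{H}')$; (ii) different PBW-style decompositions of $\bm{u}$ yield reductions agreeing modulo $\mf{g}(W'_{n} \otimes \mc{H}')$, by a commutator argument using $[\mf{g}, \mf{P}] \subseteq \mf{P}$, where $\mf{P} \subseteq \mf{g}_{\ast p_{n}}(\Sigma)$ denotes the subspace of purely polar functions. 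Granted these, $\beta$ descends to $\ms{H}$, $\beta \circ \alpha$ is the identity on the quotient $(W'_{n} \otimes \mc{H}')/\mf{g}(W'_{n} \otimes \mc{H}')$, and the two spaces are canonically identified.
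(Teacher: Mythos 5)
Your surjectivity argument matches the paper's, which inducts on the negative-$z$-degree filtration in the spirit of Prop.~\ref{prop:surjectivity_coinvariants}. But the kernel step goes a genuinely different route. After reaching the intermediate identification $\ms{H} \simeq \bigl(\wh{W}'_n \otimes \mc{H}'\bigr)\big\slash\mf{g}_{\ast p_n}(\Sigma)\bigl(\wh{W}'_n \otimes \mc{H}'\bigr)$, the paper concludes in one stroke: a rational function with poles at most at $p_n$ decomposes as a constant plus an element whose Laurent expansion at $p_n$ lies in $z_n^{-1}\mb{C}\bigl[z_n^{-1}\bigr]$, and---because $\wh{W}'_n$ consists of linear functionals on $\wh{W}_n$---quotienting by the pure-pole part amounts to passing to the restriction along $W_n \hookrightarrow \wh{W}_n$, leaving precisely the constant $\mf{g}$-action on $W'_n \otimes \mc{H}'$. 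You instead build an explicit retraction $\beta$ by iterated reduction; this buys a concrete algorithm but at the cost of considerably more bookkeeping, and as written it does not close.

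The gap is the sentence ``Granted these, $\beta$ descends to $\ms{H}$''. Your checks (i)--(ii) show only that the reduction $R$ is well defined and linear as a map $\wh{W}'_n \otimes \mc{H}' \to \bigl(W'_n \otimes \mc{H}'\bigr)\big\slash\mf{g}\bigl(W'_n \otimes \mc{H}'\bigr)$; descent to $\ms{H}$ additionally requires $R$ to annihilate $\mf{g}_{\ast p_n}(\Sigma)\bigl(\wh{W}'_n \otimes \mc{H}'\bigr)$, and neither check delivers that. Linearity together with the recursion does dispose of the pure-pole part: the reduction step applied to $(X \otimes \tau_n(g))^{(n)}\bm{w}$ returns $-\sum_{j \in J'}(X \otimes \tau_j(g))^{(j)}\bm{w}$, which cancels the remaining terms of $\tau(X \otimes g)\bm{w}$ under $R$. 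For the constant part, however, one still needs $R\bigl(\tau(X\otimes 1)\bm{w}\bigr) \in \mf{g}\bigl(W'_n \otimes \mc{H}'\bigr)$ for $\bm{w}$ of positive filtration degree, i.e.\ that $R$ intertwines the diagonal $\mf{g}$-action modulo $\mf{g}\bigl(W'_n \otimes \mc{H}'\bigr)$; you invoke $[\mf{g},\mf{P}] \subseteq \mf{P}$ only to settle (ii), not for this. So the crucial step is asserted rather than derived, and while it is likely true, the paper's restriction-to-$W_n$ observation sidesteps the issue entirely.
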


Thus also in this case we can reduce the discussion to $\mf g$-coinvariants for the tensor product of finite modules.

Now suppose the dual module is tame, and adapt the discussion of \S~\ref{sec:auxiliary_tame_module}.
There is a surjective map $\mc H' \to \ms H$, where again $\mc H' = \bigotimes_{j \in J'} W_j$---embedded in $\mc H$ via $\bm{\wh w} \mapsto \psi \otimes \bm{\wh w}$, where $\psi \in V'_n$ is the cyclic vector.
Reasoning as in Lem.~\ref{lem:final_identification_coinvariants} the $\theta^{-1}(\mf n^-)$-action cannot give coinvariant elements, so we are left with the action of $\mf h \oplus \theta^{-1}(\mf n^+)$.

In the dual case where $\theta = \theta_0 = -\on{Id}_{\mf g}$ we have $\theta^{-1}(\mf n^+) = \mf n^+$, so we are essentially back to \S~\ref{sec:dimension_coinvariants}.
The contragredient case where $\theta = \theta_1$ (the transposition) instead allows to go further.
In this case $\theta^{-1}(\mf n^+) = \mf n^-$, whence a new identification $\ms H \simeq \mc H'_{\mf b^-}$, and to trivialise the $\mf h$-action we consider once more the zero-weight subspace inside $\mc H'$.
This is again~\eqref{eq:zero_weight_space_tensor_product}, whose (finite) dimension is given in Prop.~\ref{prop:dimension_weight_space_tensor_product}.

Finally in this setup we can recover nontriviality, as follows.
Recall that we attach weights $\bm \lambda = (\lambda_j)_{j \in J} \in \bigl( \mf h^{\dual} \bigr)^{J}$ to the marked points, and that we consider the sum $\abs{\bm \lambda} = \sum_{j \in J} \lambda_j \in \mf h^{\dual}$.
The weight space is $\mc H'_{\abs{\bm \lambda}} \subseteq \mc H'$, hence
\begin{equation*}
	\ms H \simeq \mc H'_{\abs{\bm \lambda}} \big\slash \bigl( \mf n^- \mc H' \cap \mc H'_{\abs{\bm \lambda}} \bigr) \, .
\end{equation*}
(Compare with~\eqref{eq:final_identification_coinvariants}: as expected the roles of the nilpotent subalgebras $\mf n^{\pm}$ are exchanged---by $\theta$.)

\begin{proposition}
	\label{prop:nontrial_irregular_conformal_blocks}
	Suppose $n \geq 3$ and $\abs{\bm \lambda} \in Q^+$: then the space of coinvariants is nontrivial---for any choice of wild parts.
\end{proposition}

A fortiori then nontriviality holds if the $n$-th module is not tame.

\begin{proof}
	For $\wh w \in \mc F_{\abs{\lambda}}(W_1) \subseteq W_1$ consider the pure tensor
	\begin{equation*}
		\wh{\bm w} \ceqq \wh w \otimes \bigotimes_{2 \leq i \leq n-1} w_i \in \mc H'_{\abs{\bm \lambda}} \, .
	\end{equation*}
	(In brief, put the cyclic vector in all slots except the first, and put a vector of suitable weight in the first slot.)
	An argument analogous to the proof of Lem.~\ref{lem:final_identification_coinvariants} shows that $\bm{\wh w}_i \not\in \mf n^-\mc H'$, which means exactly that $\bigl[ \wh{\bm w}_i \bigr] \neq 0$ inside $\ms H$.

	More precisely denote by $\mc{E}^{(j)}_{\leq \bullet}$ the filtration on $U(\mf g) w_j = U(\mf n^-) w_j \subseteq W_j$ induced from $U(\mf n^-)$, as in \S~\ref{sec:filtrations}, with associated grading $\gr \bigl( \mc{E}^{(j)} \bigr)_{\bullet}$.
	Then consider the tensor-product grading
	\begin{equation*}
		\gr \bigl( \bm{\mc{E}} \bigr)_{\bullet} \ceqq \bigotimes_{j \in J''} \gr \bigl( \mc{E}^{(j)} \bigr)_{\bullet} \, , \qquad \text{where } J'' \ceqq J' \setminus \set{1} \, .
	\end{equation*}
	Using~\eqref{eq:constant_grading_shift} yields
	\begin{equation*}
		\mf n^- \Bigl( W_1 \otimes \gr \bigl( \bm{\mc{E}} \bigr)_{\bm k} \Bigr) \subseteq \Bigl( W_1 \otimes \gr \bigl( \bm{\mc{E}} \bigr)_{\bm k} \Bigr) \oplus \bigoplus_{i = 2}^{n-1} \Bigl( W_1 \otimes \gr \bigl( \bm{\mc{E}} \bigr)_{\bm k + \varepsilon_i} \Bigr) \, ,
	\end{equation*}
	for $\bm k \in \mb Z_{\geq 0}^{J''}$, where $\varepsilon_i \in \mathbb{Z}^{J''}$ is the $i$-th vector of the canonical $\mb Z$-basis.
	Again the vanishing of components in the latter direct summands cannot happen, since the $U(\mf n^-)$-action is free on singular modules.
\end{proof}

\begin{remark*}
	If $n = 2$ instead simply $\ms H \simeq \mc F_{\nu}(W) \big\slash \bigl( \mf n^- W \cap \mc F_{\nu} (W) \bigr)$ for $\nu \in Q^+$, and we must further distinguish the tame/wild case.

	In the tame case $V = \mf n^- V \oplus \mb C v$, so nontriviality implies $v \in \mc F_{\nu}$: this forces $\nu = 0$ and $\ms H \simeq \mc F_{\lambda}(V) = \mb C v$.

	In the wild case instead write $\nu = \sum_{\alpha \in \mc R^+} m_{\alpha} \alpha$ for $m_{\alpha} \in \mb Z_{\geq 0}$, and consider the vector
	\begin{equation*}
		\wh w_{\nu} \ceqq \prod_{\alpha \in \mc R^+} \bigl( X_{-\alpha} z \bigr)^{m_{\alpha}} w \in \mc F_{\nu}(W) \, ,
	\end{equation*}
	ordering again the positive roots along the Cartan--Weyl basis~\eqref{eq:cartan_weyl_basis} (note this makes sense at all depths $p \geq 2$).
	Clearly $\wh w_{\nu} \not\in \mf n^- W$, since all occrrences of root vectors have positive $z$-degree, hence $\ms H \neq (0)$ always in this case.
\end{remark*}

\begin{remark*}
	One may also consider the tensor products of the grading of Def.~\ref{def:z_grading_finite_singular_module}, in addition to the $\mf h$-weight grading---i.e. use the fact that every finite module is a graded $\mf n^- \llbracket z \rrbracket$-module.
	Namely there is a decomposition
	\begin{equation}
		\mc H' = \bigoplus_{\bm k \in \mb Z^{J'}} \bm{\mc F}^+_{\bm k} \, , \qquad \text{where} \qquad \bm{\mc F}^+_{\bm k} = \bigotimes_{j \in J'} \mc F^+_{k_j} (W_j) \, ,
	\end{equation}
	which is preserved by the tensor product $\mf b^-$-action, so $\ms H \simeq \bigoplus_{k \in \mb Z^{J'}} (\bm{\mc F}^+_{\bm k})_{\mf b^-}$.
	This is a new feature: in the tame case the grading in positive $z$-degree is trivial.
\end{remark*}

\section{Flat connections}
\label{sec:connection_marked_points}

Consider a particular case of the setup of \S~\ref{sec:irregular_conformal_blocks}: mark $n+1$ (ordered) points on $\Sigma$, vary the first $n \geq 1$ of them, and \emph{fix} singular characters at those points.

Thus we work on a closed subspace of $\Conf_{n+1} (\Sigma)$, which is naturally identified with the local chart $U' = \Conf_n(U) \subseteq C_n$ of Rmk.~\ref{rem:local_chart_configuration_space} where $p_{n+1} = \infty$---whence $\Set{ p_1,\dc,p_n } \subseteq U \simeq \mb C$.
The label set becomes $J = \Set{ 1,\dc,n,\infty }$, and we write $J' \ceqq J \setminus \Set{\infty}$.

Then we have two versions of spaces of covacua: either we put a singular module at infinity, or a $\theta$-dual.
In any case we consider the restrictions of the vector bundles $\bm{\mc H} \subseteq \wh{\bm{\mc H}}$ over $U' \simeq C_n(\mb C) \ceqq \Conf_n(\mb C)$, as well as for the sheaves $(\pi_{\Sigma})_{\ast} \mc O_{\ast \mc D}$ and $\mf g_{\ast \mc D}$ on $U'$---and keep the same notation for them.

Then we want to define a connection $\wh{\nabla}$ on $\wh{\bm{\mc H}} \to C_n(\mb C)$ which is compatible with the action of the sheaf of Lie algebras $\mf g_{\ast \mc D}$.
In the given trivialisation this will be of the form $\wh{\nabla} = \dif - \wh{\varpi}$, where $\wh{\varpi}$ is a 1-form on $C_n(\mb C)$ with values in endomorphisms of the fibres, and with a view towards (generalisations of) KZ~\cite{knizhnik_zamolodchikov_1984_current_algebra_and_wess_zumino_model_in_two_dimensions} we set
\begin{equation}
	\Braket{ \wh{\varpi} | \partial_{t_i} } \ceqq L_{-1}^{(i)} \, , \qquad \text{for } i \in J' \, ,
\end{equation}
where we use the coordinates $\bm t \colon C_n(\mb C) \to \mb C^n$ of Rmk.~\ref{rem:local_chart_configuration_space} and the Sugawara operator~\eqref{eq:sugawara_operator}.
This is a translation-invariant 1-form on the parallelisable manifold $C_n(\mb C)$, so in particular $\dif \wh{\varpi} = 0$.
Further the actions of $L_{-1}$ on different slots commute, so $\bigl[ \wh{\varpi} \wedge \wh{\varpi} \bigr] = 0$, and the connection $\wh{\nabla}$ is (strongly) flat.

\subsection{Compatibility with the action of meromorphic functions}
\label{sec:compatibility_connections}

We now consider a natural connection $D$ on the sheaf $\mf g_{\ast \mc D}$---a linear map $D \colon \mf g_{\ast \mc D} \to \Omega^1_{C_n(\mb C)} \otimes \mf g_{\ast \mc D}$ satisfying Leibniz's rule.
Namely we set
\begin{equation}
	D (X \otimes f) \ceqq X \otimes \dif f \, ,
\end{equation}
where $\dif \colon \Omega^0_{C_n(\mb C)} \to \Omega^1_{C_n(\mb C)}$ is the standard de Rham differential.

\begin{proposition}
	\label{prop:compatibility_variations_marked_points}

	One has
	\begin{equation}
		\label{eq:compatibility_connections}
		\wh{\nabla} \bigl( \tau(X \otimes f) \wh{\bm w} \bigr) = \tau \bigl( D(X \otimes f) \bigr) \wh{\bm w} + \tau(X \otimes f) \wh{\nabla} \wh{\bm w} \, ,
	\end{equation}
	where $X \in \mf g$, and $f$ and $\wh{\bm w}$ are local sections of $(\pi_{\Sigma})_{\ast} \ms O_{\ast \mc D}$ and $\wh{\bm{\mc H}}$, respectively.
\end{proposition}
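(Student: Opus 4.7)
The plan is a direct Leibniz computation. Identity~\eqref{eq:compatibility_connections} is an equation of $\wh{\bm{\mc{H}}}$-valued $1$-forms on $C_{n}(\mb{C})$, so I would contract both sides with each coordinate vector field $\partial_{t_i}$ for $i \in \{1,\dotsc,n\}$. Expanding $\wh{\nabla}(\partial_{t_i}) = \partial_{t_i} - L^{(i)}_{-1}$ and cancelling the common $\tau(X \otimes f)\,\wh{\nabla}(\partial_{t_i})\,\wh{\bm{w}}$ piece on each side, the statement reduces to the single operator identity
\begin{equation*}
  [\partial_{t_i} - L^{(i)}_{-1},\; \tau(X \otimes f)]\,\wh{\bm{w}} \;=\; \tau(X \otimes \partial_{t_i} f)\,\wh{\bm{w}}
\end{equation*}
on sections of $\wh{\bm{\mc{H}}}$. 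In plain words, the covariant derivative in the base direction $\partial_{t_i}$ should commute past the action $\tau(X \otimes f)$ at the cost of replacing $f$ by its partial derivative $\partial_{t_i} f$, which is again a section of $(\pi_{\Sigma})_{\ast} \ms{O}_{\ast \mc{D}}$ and hence a valid input of $\tau$.

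The commutator on the left splits into two contributions to be computed separately. For the $\partial_{t_i}$-piece: $\tau(X \otimes f) = \sum_j (X \otimes \tau_j(f))^{(j)}$ depends on $\bm{t}$ only through the Laurent coefficients of the expansions $\tau_j(f)(z_j,\bm{t}) = f(z_j + t_j,\bm{t})$, and a chain rule computation produces $\tau_j(\partial_{t_i} f)$ for every $j$---which reassembles into the ``global'' piece $\tau(X \otimes \partial_{t_i} f)$---together with an additional ``local'' term proportional to $\partial_{z_i} \tau_i(f)$ when $j = i$, arising from the infinitesimal motion of the expansion centre. For the $L^{(i)}_{-1}$-piece: only the $i$-th slot contributes, since $L^{(i)}_{-1}$ commutes with actions on other slots, and the Sugawara commutation relation $[L_{-1}, X z^m] = -m\, X z^{m-1}$---readily derived from~\eqref{eq:sugawara_operator} and~\eqref{eq:affine_lie_bracket}, and reflecting that $L_{-1}$ implements infinitesimal translation on modes---gives $[L^{(i)}_{-1},\, \tau(X \otimes f)]$ as an explicit multiple of $(X \otimes \partial_{z_i} \tau_i(f))^{(i)}$.

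The heart of the argument is then the cancellation of the two local $\partial_{z_i}$-type contributions in $[\partial_{t_i} - L^{(i)}_{-1},\, \tau(X \otimes f)]$: this is the infinitesimal compatibility between the geometric motion of the marked point $p_i = t_i$ generated by $\partial_{t_i}$ and the algebraic translation of the $i$-th loop algebra generated by $L^{(i)}_{-1}$. What survives is $\sum_j (X \otimes \tau_j(\partial_{t_i} f))^{(j)} = \tau(X \otimes \partial_{t_i} f) = \tau(D(X \otimes f))(\partial_{t_i})$, as required. The main (and genuinely minor) obstacle is the sign bookkeeping in this cancellation, which is sensitive to the choice of the local coordinate $z_i$ vanishing at $p_i$ and to the induced identification $\wh{\mf{g}}_i \simeq \wh{\mf{g}}$; once the Sugawara conventions of~\S\ref{sec:sugawara} are applied consistently, the cancellation is clean. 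No convergence issues arise, since by Lem.~\ref{lem:smooth_modules} the Sugawara series acts term-by-term on the singular modules.
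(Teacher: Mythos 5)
Your proof takes essentially the same route as the paper's: contract the identity with each $\partial_{t_i}$, reduce to a single operator commutator, and then balance the chain-rule contribution arising from the $\bm{t}$-dependent expansion centre $z_i = z - t_i$ against the Sugawara relation $[L_{-1},Xz^{m}] = -m\,Xz^{m-1}$ (Lem.~\ref{lem:commutation_sugawara_meromorphic_functions}), noting that only the $i$-th slot of $\tau(X\otimes f)$ can fail to commute with $L_{-1}^{(i)}$. The paper packages the two local contributions into the single identity $\partial_{t_i}\bigl(\tau_j(f)\bigr) = \tau_j(\partial_{t_i}f) + \delta_{ij}\bigl[L_{-1},\tau_j(f)\bigr]$ rather than treating them as two pieces to cancel, but this is a presentational difference, not a mathematical one; your remark on smoothness (Lem.~\ref{lem:smooth_modules}) and the sensitivity of the sign bookkeeping likewise match the paper's reasoning.
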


To prove this we use the following well-known fact.

\begin{lemma}[\cite{kac_1990_infinite_dimensional_lie_algebras}, Lem.~12.8]
	\label{lem:commutation_sugawara_meromorphic_functions}
	One has the identity $\bigl[ L_{-1}, Xz^m  \bigr] = -m X z^{m-1}$ inside the level-$\kappa$ completion of $U \bigl(\wh{\mf g} \bigr)$, for $X \in \mf g$ and $m \in \mb Z$.
\end{lemma}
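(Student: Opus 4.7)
The plan is a direct expansion of the commutator using the derivation property. Fix $X \in \mf{g}$ and $m \in \mb{Z}$; for such a Laurent degree only finitely many summands of~\eqref{eq:sugawara_operator} contribute to $[L_{-1}, X z^m]$, so no convergence issues arise and the computation is a finite algebraic manipulation. I would split the contribution to $2(\kappa+h^\vee)[L_{-1},Xz^m]$ into two pieces: a \emph{central} piece coming from the cocycle in~\eqref{eq:affine_lie_bracket}--\eqref{eq:cocycle_affine_lie_algebra} when $Xz^m$ is bracketed with $X_k z^{-j}$ or $X^k z^{j-1}$, and a \emph{Lie} piece coming from the ordinary bracket $[X_k,X] z^{m-j}$, respectively $[X^k,X] z^{m+j-1}$.

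For the central piece, the cocycle pairings $c(X_k z^{-j},Xz^m)=-j(X_k\mid X)\delta_{j,m}$ and $c(X^k z^{j-1},Xz^m)=(j-1)(X^k\mid X)\delta_{j,1-m}$ collapse the sum over $j$ and, after summing over $k$ using the dual-basis identities $\sum_k(X_k\mid X) X^k = X = \sum_k (X^k\mid X) X_k$, produce two copies of $-m\kappa\, Xz^{m-1}$, i.e.\ a total of $-2m\kappa\, Xz^{m-1}$ (using that $K$ acts as $\kappa$). For the Lie piece I would reindex $j\mapsto j+m$ in one of the two sums so that both take the form prepared for~\eqref{eq:commutation_quadratic_tensor}; the identity~\eqref{eq:commutation_quadratic_tensor} then cancels the ``bulk'' of the Lie piece. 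The only surviving residue comes from the finitely many indices $j$ where the reindexing forces one of the two factors across the normal-ordering barrier, and such reorderings contribute $\sum_k [X_k,X^k]z^{-1}$-type terms which, when bracketed once more and summed, yield exactly $-2m h^{\vee} Xz^{m-1}$ by the definition of the dual Coxeter number recalled in the first remark of \S~\ref{sec:setup}.

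Adding the two contributions gives $-2m(\kappa+h^{\vee})\, Xz^{m-1}$, and dividing by the Sugawara normalisation $2(\kappa+h^{\vee})$ yields the claim; the noncriticality $\kappa\neq -h^{\vee}$ enters precisely here. The main obstacle, and the reason the result is nontrivial, is the careful bookkeeping of normal-ordering corrections: a naive derivation computation cancels the Lie piece entirely and produces only $-2m\kappa\, Xz^{m-1}$, missing the $h^\vee$ contribution. Tracking which factor has to be moved across which, and showing that the residue assembles into the adjoint action of the quadratic Casimir on $X$, is the one step that really needs attention; everything else is bracket expansion and relabelling.
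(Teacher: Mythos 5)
The paper gives no proof of this lemma at all: it is quoted from the reference (Kac, Lem.~12.8) and used as a black box, so there is no internal argument to compare against. Your computation is essentially the standard proof of the cited result, and its structure and accounting are correct: the cocycle~\eqref{eq:cocycle_affine_lie_algebra} fires only at $j=m$ and $j=1-m$, each time with coefficient $-m$, and the dual-basis contractions give the central contribution $-2m\kappa\, Xz^{m-1}$ (at level $\kappa$, i.e.\ after $K\mapsto\kappa$, which is the setting in which $L_{-1}$ is defined); the shift $j\mapsto j+m$ aligns the two adjoint-type sums so that the invariance identity~\eqref{eq:commutation_quadratic_tensor} cancels the bulk; and the finitely many normal-ordering mismatches left over assemble into double brackets $\sum_{k}\bigl[[X_{k},X],X^{k}\bigr]z^{m-1}=-2h^{\vee}Xz^{m-1}$ each, giving $-2m(\kappa+h^{\vee})Xz^{m-1}$ in total, hence the claim after dividing by the normalisation.

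Two points should be sharpened. First, it is not literally true that only finitely many summands of~\eqref{eq:sugawara_operator} contribute: for every $j$ the commutator of $\colon X_{k}z^{-j}X^{k}z^{j-1}\colon$ with $Xz^{m}$ is a nonzero element of $\wh{U}\bigl(\wh{\mf{g}}\bigr)$; what legitimises the manipulation is that modulo each defining left ideal $U\bigl(\wh{\mf{g}}\bigr)z^{N}\mf{g}\llbracket z\rrbracket$ (equivalently, on any vector of a smooth module) only finitely many terms survive, and it is precisely your reindexing-plus-cancellation that turns the infinite sum into a finite residue. Second, the reordering corrections are not ``$\sum_{k}[X_{k},X^{k}]z^{-1}$-type'' terms but the double brackets above, and one should also note that the swapped pairs have total degree $m-1$, so the cocycle contributes no additional central terms (the only case where it could, $m=1$, has the relevant coefficient equal to zero); this is needed for the clean coefficient $-2mh^{\vee}$. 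With these clarifications your outline is a complete and correct proof, matching what the cited lemma asserts.
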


\begin{proof}[Proof of Proposition~\ref{prop:compatibility_variations_marked_points}]
	For $i \in J'$, and for local sections $\wh{\bm w}$ and $X \otimes f$ of $\wh{\mc H}$ and $\mf g_{\ast \mc D}$---respectively---, we must prove that
	\begin{equation}
		\partial_{t_i}\bigl( \tau(X \otimes f) \wh{\bm w} \bigr) - \bigl[ L_{-1}^{(i)},\tau(X \otimes f) \bigr] \wh{\bm w} = \tau\bigl(X \otimes \partial_{t_i} f \bigr) \wh{\bm w} + \tau(X \otimes f) \partial_{t_i} \wh{\bm w} \, .
	\end{equation}

	Now for $j \in J'$ we have the expansions
	\begin{equation}
		\tau_j(f) = \sum_k f_k (t_1,\dc, t_n) z_j^k \, ,
	\end{equation}
	where $f_k$ is a regular function on an open subset of $C_n(\mb C)$, and we take the local coordinate $z_j = z - t_j$ on $\Sigma$---vanishing at $p_j$.
	Since $\partial_{t_i}(z_j) + \delta_{ij} = 0$ one has
	\begin{equation}
		\partial_{t_i} \bigl( \tau_j(f) \bigr)
		= \tau_j(\partial_{t_i} f) + \delta_{ij} \bigl[ L_{-1},\tau_j(f) \bigr] \, ,
	\end{equation}
	using Lem.~\ref{lem:commutation_sugawara_meromorphic_functions}.
	Hence by~\eqref{eq:action_meromorphic_functions}:
	\begin{equation}
		\partial_{t_i} \bigl( \tau(X \otimes f) \wh{\bm w} \bigr)
		= \tau \bigl(X \otimes \partial_{t_i} f \bigr) \wh{\bm w} + \bigl[ L_{-1},X \otimes \tau_i(f) \bigr]^{(i)} \wh{\bm w} + \tau(X \otimes f) (\partial_{t_i} \wh{\bm w}) \, ,
	\end{equation}
	and we conclude with
	\begin{equation*}
		\bigl[ L_{-1}^{(i)}, \tau(X \otimes f) \bigr] =
		\bigl[ L_{-1}^{(i)}, \bigl( X \otimes \tau_i (f) \bigr)^{(i)} \bigr] = \bigl[ L_{-1},X \otimes \tau_i (f) \bigr]^{(i)} \, . \qedhere
	\end{equation*}
\end{proof}

Thus a reduced connection is well defined on $\ms H \to C_n(\mb C)$, since $\wh{\nabla}$ preserves the sheaf of sections with values in the subspaces $\mf g_{\ast D} \wh{\mc H}_{\bm p,\bm \chi} \subseteq \wh{\mc H}_{\bm p,\bm \chi}$, by~\eqref{eq:compatibility_connections}.
We conclude that the sheaf of covacua has a natural structure of flat vector bundle over the space of tame isomonodromy times, so in particular the dimension of~\eqref{eq:irregular_conformal_blocks_space} is constant along variations of the marked points---when finite.
After dualising the connection, the same statements follow for the spaces of vacua.

\subsection{Description on finite modules: first version}
\label{sec:description_finite_modules}

By the results of \S~\ref{sec:coinvariants} it is possible to describe the reduction of $\wh{\nabla}$ as the $\mf g$-reduction of a connection $\nabla$ living on the vector sub-bundle $\bm{\mc H} \subseteq \wh{\bm{\mc H}}$, and further as a connections acting on $\bm{\mc H}' \subseteq \bm{\mc H}$ when the module at infinity is tame.

The goal is to find an explicit expression for $\nabla$.
For this we will use the following elementary fact, where we further set $z_{\infty} \ceqq z^{-1}$---a local coordinate vanishing at infinity.

\begin{lemma}[Expansions at irregular singularities]~\newline
	\label{lem:pole_expansion}
	For $i \in J'$ and for an integer $m > 0$ one has
	\begin{equation}
		\tau_j \bigl( z_i^{-m} \bigr) =
		\begin{cases}
			\label{eq:pole_expansion_finite}
			\sum_{l \geq 0} \binom{m+l-1}{l} \frac{z_j^l}{(t_i - t_j)^l(t_j - t_i)^m} \, , & j \in J \setminus \Set{i} \, , \\
			\sum_{l \geq 0} \binom{m+l-1}{l} t_i^l z_{\infty}^{m+l} \, ,                   & j = \infty \, .
		\end{cases}
	\end{equation}
\end{lemma}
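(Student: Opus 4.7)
The plan is a direct computation in two cases, using nothing more than the generalised binomial series
\[
    (1 - x)^{-m} = \sum_{l \geq 0} \binom{m+l-1}{l} x^{l} \, ,
\]
viewed as a formal identity in $\mb{C} \llbracket x \rrbracket$ (which agrees with the Taylor expansion of $(1-x)^{-m}$ at $x = 0$).

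First I would treat the case $j \in J \setminus \Set{i}$ with $j \neq \infty$. Here the local coordinate at $p_{j}$ is $z_{j} = z - t_{j}$, so $z = t_{j} + z_{j}$ and $z - t_{i} = -(t_{i} - t_{j}) + z_{j}$. Factoring out the constant term (which is nonzero since $\bm{p} \in C_{n}(\mb{C})$ forces $t_{i} \neq t_{j}$) gives
\[
    z_{i}^{-m} = (z - t_{i})^{-m} = \bigl( - (t_{i} - t_{j}) \bigr)^{-m} \left( 1 - \frac{z_{j}}{t_{i} - t_{j}} \right)^{-m} .
\]
Applying the binomial series with $x = z_{j}/(t_{i} - t_{j})$, and using $\bigl( -(t_{i} - t_{j}) \bigr)^{-m} = (t_{j} - t_{i})^{-m}$ for the integer $m$, yields the first line of~\eqref{eq:pole_expansion_finite}.

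Second I would treat the case $j = \infty$. Here the local coordinate vanishing at infinity is $z_{\infty} = z^{-1}$, so $z = z_{\infty}^{-1}$ and
\[
    z - t_{i} = z_{\infty}^{-1}\bigl( 1 - t_{i} z_{\infty} \bigr),
\]
whence
\[
    z_{i}^{-m} = z_{\infty}^{m} \bigl( 1 - t_{i} z_{\infty} \bigr)^{-m}.
\]
Applying the binomial series with $x = t_{i} z_{\infty}$ and distributing the prefactor $z_{\infty}^{m}$ gives the second line of~\eqref{eq:pole_expansion_finite}.

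There is no real obstacle: the only points that need care are the signs in the prefactor $(t_{j} - t_{i})^{-m}$ and the fact that the series expansion is taken in the appropriate formal variable $z_{j}$ at each marked point—consistent with the definition of $\tau_{j}$ as the Laurent expansion at $p_{j}$. Both resulting series lie in $\wh{\ms{O}}_{j}$ (they are in fact power series in $z_{j}$), as expected since $z_{i}^{-m}$ is regular away from $p_{i}$.
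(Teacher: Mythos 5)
Your computation is correct on both branches, and the paper itself gives no proof of this lemma (it is stated as an elementary fact about Laurent/Taylor expansions), so yours supplies exactly what the paper leaves implicit. The only small ambiguity is in the lemma's case split: the first branch is written ``$j \in J \setminus \{i\}$'' even though $\infty \in J \setminus \{i\}$, but you read it as intended — namely $j$ finite and distinct from $i$ — and the argument via $(1-x)^{-m} = \sum_{l \ge 0}\binom{m+l-1}{l}x^l$ with $x = z_j/(t_i - t_j)$ for finite $j$, and $x = t_i z_\infty$ for $j = \infty$, is the standard and correct derivation.
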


\subsubsection{Tame case}
\label{sec:reduced_connection_tame}

Suppose $r_j = 1$ for $j \in J$. Then using~\eqref{eq:pole_expansion_finite} with $m = 1$ yields
\begin{equation}
	X \otimes \tau_j \bigl( z_i^{-1} \bigr) \wh v_j = \frac{ X }{t_j - t_i} \wh v_j \, , \qquad X \otimes \tau_{\infty} \bigl( z_i^{-1} \bigr) \wh v_{\infty} = 0 \, ,
\end{equation}
for $X \in \mf g$, $i \neq j \in J'$, $\wh v_j \in V_j$ and $\wh v_{\infty} \in V_{\infty}$---since $ z_j \mf g \llbracket z_j \rrbracket V_j = (0)$ for $j \in J$.
Hence by~\eqref{eq:identity_coinvariants} one has the following identity inside $\ms H$---with tacit use of $\pi_{\ms H}$:
\begin{equation}
	\bigl(X \otimes z_i^{-1} \bigr)^{(i)} \wh{\bm v} \otimes \wh v_{\infty} = \sum_{j \in J' \setminus \Set{i}} \frac{ X^{(j)} }{t_i - t_j} \wh{\bm v} \otimes \wh v_{\infty} \, ,
\end{equation}
where $\wh{\bm v} = \bigotimes_{j \in J'} \wh v_j \in \bm{\mc H}$.
In particular the action is trivial at infinity.

Looking at~\eqref{eq:action_sugawara_minus_one_finite_module_tame} and writing $L_{-1}^{(i)} (\wh{\bm v} \otimes \wh v_{\infty}) = \wh{\bm v}_i \otimes \wh v_{\infty}$ we find
\begin{equation}
	\label{eq:kz_connection}
	\wh{\bm v}_i = \frac{1}{\kappa + h^{\dual}} \sum_{j \in J' \setminus \Set{i}} \Biggl( \sum_k \frac{ (X^k)^{(i)}X_k^{(j)} }{ t_i - t_j } \Biggr) \wh{\bm v} = \frac{1}{\kappa + h^{\dual}} \sum_{j \in J' \setminus \Set{i}} \frac{\Omega^{(ij)}}{t_i - t_j} \wh{\bm v} \, ,
\end{equation}
where $\Omega^{(ij)} \ceqq \iota^{(ij)} (\Omega)$ denotes the embedding~\eqref{eq:embedding_quadratic_tensors} of the quadratic tensor~\eqref{eq:quadratic_tensor}---with $m = l = 0$.

One recovers the KZ connection~\cite{knizhnik_zamolodchikov_1984_current_algebra_and_wess_zumino_model_in_two_dimensions} on the sub-bundle $\mc H'_{\abs{\bm \lambda}} \hookrightarrow \mc H$, taking $V_{\infty}$ as auxiliary tame module.

\subsubsection{Tame modules in the finite part}
\label{sec:reduced_connection_wild_at_infinity}

Now allow $r_{\infty} \geq 1$ to be arbitrary.
What changes is
\begin{equation}
	X \otimes \tau_{\infty} (z_i^{-1}) \wh w_{\infty} = \sum_{l = 0}^{r_{\infty} - 2} t_i^l Xz_{\infty}^{l+1} \cdot \wh w_{\infty} \, ,
\end{equation}
for $X \in \mf g$, $\wh w_{\infty} \in W_{\infty}$ and $i \in J'$,  using the case $m = 1$ of~\eqref{eq:pole_expansion_finite}.
So the action is nontrivial at infinity if $r_{\infty} \geq 2$.

Then by~\eqref{eq:identity_coinvariants} one has the following identity inside $\ms H$---with tacit use of $\pi_{\ms H}$:
\begin{equation}
	\bigl(X \otimes z_i^{-1} \bigr)^{(i)} \wh{\bm v} \otimes \wh w_{\infty} = \left( \sum_{j \in J' \setminus \Set{i}} \frac{ X^{(j)} }{t_i - t_j} - \sum_{l = 0}^{r_{\infty} - 2} t^l_i \bigl( X z^{l+1} \bigr)^{(\infty)} \right) \wh{\bm v} \otimes \wh w_{\infty} \, .
\end{equation}
Thus looking at~\eqref{eq:action_sugawara_minus_one_finite_module_tame} one finds $L_{-1}^{(i)} (\wh{\bm v} \otimes \wh w_{\infty}) = \wh{\bm v}_i \otimes w_{\infty} + \mc D_i (\wh{\bm v} \otimes \wh w)$, where $\wh{\bm v}_i$ is as in~\eqref{eq:kz_connection}, and
\begin{equation}
	\mc D_i (\wh{\bm v} \otimes \wh w_{\infty}) %
	= \frac{1}{\kappa + h^{\dual}} \sum_{l = 0}^{r_{\infty} - 2} t_i^l \Omega^{(i\infty)}_{0,l+1} (\wh{\bm v} \otimes \wh w_{\infty}) \, ,
\end{equation}
using again the embedding $\iota^{(i\infty)} (\Omega_{0,l+1})$ of~\eqref{eq:quadratic_tensor} defined by~\eqref{eq:embedding_quadratic_tensors}.

\begin{remark*}
	E.g. if $r_{\infty} = 2$ then the new operator acts by
	\begin{equation}
		\label{eq:action_infinity_order_two}
		\mc D_i (\bm{\wh v} \otimes \widehat{w}_{\infty}) = \frac{\Omega_{01}^{(i\infty)} \wh{\bm v} \otimes \wh w_{\infty}}{\kappa + h^{\dual}}  \, .
	\end{equation}
	In this case the reduced connection is close to the dynamical KZ connection, i.e.~\cite[Eq.~3]{felder_markov_tarasov_varchenko_2000_differential_equations_compatible_with_kz_equations}.
	We will recover the very same `dynamical' Cartan term in \S~\ref{sec:dynamical_term}.
\end{remark*}

\subsubsection{Tame module at infinity}
\label{sec:reduced_connection_tame_at_infinity}

Suppose symmetrically that $r_{\infty} = 1$, but let $r_j$ be arbitrary for $j \in J'$.

\begin{proposition}
	\label{prop:irregular_kz_connection}
	One has $L_{-1}^{(i)} \wh{\bm w} \otimes \wh v_{\infty} = \wh{\bm w}_i \otimes \wh v_{\infty}$, with
	\begin{equation}
		\label{eq:irregular_kz_connection}
		\wh{\bm w}_i = %
		-\frac{1}{\kappa + h^{\dual}} \sum_{j \in J' \setminus \Set{i}} \Biggl( \sum_{m = 0}^{r_i-1}  \sum_{l = 0}^{r_j-1} \binom{m+l}{l} \frac{ \Omega^{(ij)}_{ml} \wh{\bm w}}{(t_i - t_j)^l(t_j - t_i)^{m+1}} \Biggr) \, .
	\end{equation}
\end{proposition}

\begin{proof}
	Postponed to \S~\ref{sec:proof_prop_irregular_kz_connection}.
\end{proof}

This is an irregular generalisation of the KZ connection, corresponding to an action of the universal connection of ~\cite{reshetikhin_1992_the_knizhnik_zamolodchikov_system_as_a_deformation_of_the_isomonodromy_problem}.\footnote{Compare also~\eqref{eq:irregular_kz_connection} with~\cite[Eqs.~B.6 and B.7]{gaiotto_lamypoirier_2013_irregular_singularities_in_the_h3plus_wzw_model}, where $\mf g = \mf{sl}(2,\mb C)$: this should be a formalisation of fn.~6 of op. cit.}

\begin{remark*}
	By op.~cit. the flat connection~\eqref{eq:irregular_kz_connection} admits an isomonodromy system as semiclassical limit: precisely the irregular isomonodromy system on $\mb CP^1$ for variations of the positions of the poles (the tame isomonodromy times, as considered in~\cite{klares_1979_sur_une_classe_de_connexions_relatives}).
	This generalises the same fact from the tame case: the quantisation of the Schlesinger system~\cite{schlesinger_1905_ueber_die_loesungen_gewisser_linearer_differentialgleichungen_als_funktionen_der_singularen_punkte} yields the KZ connection~\cite{reshetikhin_1992_the_knizhnik_zamolodchikov_system_as_a_deformation_of_the_isomonodromy_problem,harnad_1996_quantum_isomonodromic_deformations_and_the_knizhnik_zamolodchikov_equations}.
\end{remark*}

\subsubsection{General case}
\label{sec:reduced_connection_general}

Finally take $r_{\infty} \geq 1$ to be generic as well.

\begin{proposition}
	\label{prop:irregular_kz_connection_wild_infinity}
	One has $L_{-1}^{(i)} \wh{\bm w} \otimes \wh w_{\infty} = \wh{\bm w}_i \otimes \wh w_{\infty} + \mc D_i (\wh{\bm w} \otimes \wh w_{\infty})$, with $\wh{\bm w}_i$ as in~\eqref{eq:irregular_kz_connection} and
	\begin{equation}
		\label{eq:irregular_dynamical_term}
		\mc D_i (\wh{\bm w} \otimes \wh w_{\infty})
		= \frac{1}{\kappa + h^{\dual}} \sum_{m = 0}^{r_i-1} \sum_{l = 0}^{r_{\infty} - m - 1} \binom{m+l}{l} t_i^l \Omega^{(i\infty)}_{m,m+l+1} (\wh{\bm w} \otimes \wh w_{\infty}) \, .
	\end{equation}
\end{proposition}

\begin{proof}
	This is a generalisation of Prop.~\ref{prop:irregular_kz_connection} where moreover \begin{equation}
		X \otimes \tau_{\infty} (z_i^{-m}) \wh w_{\infty} = \sum_{l = 0}^{r_{\infty} - m - 1} \binom{m+l-1}{l} t_i^l Xz_{\infty}^{m+l} \cdot \wh w_{\infty} \, ,
	\end{equation}
	for $X \in \mf g$, $\wh w_{\infty} \in W_{\infty}$ and $i \in J'$,  using the general case of~\eqref{eq:pole_expansion_finite}.
	Now the action is nontrivial at infinity for $r_{\infty} \geq m+1$, and the result still follows from~\eqref{eq:action_sugawara_minus_one_finite_module_general}.
\end{proof}

\subsection{Description on finite modules: second version}

Finally one may consider the setup of \S~\ref{sec:irregular_conformal_blocks_space_with_duals}, i.e. put a $\theta$-dual module $W'_{\theta}$ at infinity.
In the analogue of \S\S~\ref{sec:reduced_connection_tame} and~\ref{sec:reduced_connection_tame_at_infinity}---when the module at infinity is tame---the description of the reduced connection does not change, using~\eqref{eq:annihilator_cyclic_vector_theta_dual}.
In the remaining cases one finds the action of the same quadratic tensors on the last slot, acting on the $\theta$-dual.

Hence in the next section we will introduce a \emph{universal} versions of the reduced connection, looking at~\eqref{eq:irregular_kz_connection} and~\eqref{eq:irregular_dynamical_term}, to treat the two versions on the same footing.

\section{Universal connections}
\label{sec:yang_baxter}

Fix again a depth $p \geq 1$, an integer $n \geq 1$, and the finite ordered sets
\begin{equation}
	\Set{ 1, \dc, n} = J' \subseteq J = \Set{ 1, \dc, n,\infty} \, .
\end{equation}
Consider then the nonautomous (quantum) Hamiltonian systems
\begin{equation}
	\wh H_i = \wh H^{(p)}_i \colon C_n(\mb C) \longrightarrow U \bigl( \mf g_p \bigr)^{\otimes \,  \abs J} \, ,
\end{equation}
with Hamiltonians $\wh H_i = \wh H'_i + \wh H''_i$ for $i \in J'$, where
\begin{equation}
	\label{eq:hamiltonians_finite}
	\wh H'_i(\bm t) \ceqq -\frac{1}{\kappa + h^{\dual}} \sum_{j \in J' \setminus \Set{i}} \Biggl(  \sum_{m,l = 0}^{p-1} \Omega^{(ij)}_{ml} \binom{m+l}{l} (-1)^m (t_i - t_j)^{-1-m-l} \Biggr) \, ,
\end{equation}
and
\begin{equation}
	\label{eq:hamiltonians_infinity}
	\wh H''_i(\bm t) \ceqq \frac{1}{\kappa + h^{\dual}} \sum_{m,l = 0}^{p-1} \Omega^{(i\infty)}_{m,m+l+1} \binom{m+l}{l} t_i^l \, ,
\end{equation}
as suggested by~\eqref{eq:irregular_kz_connection} and~\eqref{eq:irregular_dynamical_term}.

These Hamiltonians are equivalent to the \emph{universal} connection (at depth $p$):
\begin{equation}
	\label{eq:universal_connection}
	\nabla_p = \dif - \varpi_p \, , \quad \varpi_p = \varpi'_p + \varpi''_p \, , \quad \varpi'_p \ceqq \sum_{J'} \wh H'_i \dif t_i \, , \quad \varpi''_p \ceqq \sum_{J'} \wh H''_i \dif t_i \, ,
\end{equation}
defined on the trivial vector bundle $U(J,p) \ceqq C_n(\mb C) \times U \bigl( \mf g_p \bigr)^{\otimes \, \abs J} \to C_n(\mb C)$ by means of the $U(\mf g_p)^{\otimes \, \abs J}$-valued 1-forms $\varpi'_p$ and $\varpi''_p$ on the base space.
This generalises~\cite{reshetikhin_1992_the_knizhnik_zamolodchikov_system_as_a_deformation_of_the_isomonodromy_problem} with a nontrivial action at infinity.

Then for every choice of singular modules labeled by $J$ there is an action of~\eqref{eq:universal_connection} on $\bm{\mc H}$, for $p \gg 0$, which reproduces the most general case of \S~\ref{sec:description_finite_modules} (with $\theta$-duals or not).
In particular there are induced \emph{integrable} quantum Hamiltonian systems.
Hence one expects~\eqref{eq:universal_connection} to be flat before taking representations, as we will show.

\begin{remark*}
	One directly checks that
	\begin{equation}
		\pd{\wh H'_j}{t_i} - \pd{\wh H'_i}{t_j} = 0 \, , \qquad \text{and} \qquad \pd{\wh H''_j}{t_i} = \delta_{ij} \, , \quad \text{for } i,j \in J' \, ,
	\end{equation}
	so (strong) flatness is equivalent to the commutativity of the quantum Hamiltonians.
\end{remark*}

\subsection{Flatness at finite distance}

The 1-form defining the Hamiltonians~\eqref{eq:hamiltonians_finite} can be written
\begin{equation}
	\label{eq:1_form_universal_connection_finite}
	\varpi'_p = \frac{1}{\kappa + h^{\dual}} \sum_{i \neq j \in J'} r_p^{(ij)} (t_i - t_j) \dif \, (t_i - t_j) \, ,
\end{equation}
where $r_p \colon \mb C \setminus \Set{0} \to \mf g_p^{\otimes 2}$ is the following rational function:
\begin{equation}
	\label{eq:rational_function_finite}
	r_p(t) \ceqq -\sum_{m,l = 0}^{p-1} \Omega_{ml} \otimes (-1)^m\binom{m+l}{l} t^{-1-m-l} \, .
\end{equation}

\begin{remark*}
	It is easy to see that $r_p$ is skew-symmetric, meaning
	\begin{equation}
		\label{eq:skew_symmetry_classical_r_matrix}
		r_p^{(ij)}(t) + r_p^{(ji)}(-t) = 0 \, , \qquad \text{for } t \in \mb C \setminus \Set{0}, \, i,j \in J' \, .
	\end{equation}
\end{remark*}

The study of the connection $\nabla'_p \ceqq \dif - \varpi'_p$ is closely related to the theory of the classical Yang--Baxter equation (CYBE)~\cite{belavin_drinfeld_1982_solutions_of_the_classical_yang_baxter_equation_for_simple_lie_algebras}.
In particular flatness  (for $\abs{J'} \geq 3$) is equivalent to the CYBE for~\eqref{eq:rational_function_finite} in the Lie algebra $\mf g_p$, i.e. to the following identity inside $\mf g_p^{\otimes 3}$:
\begin{equation}
	\bigl[ r_p^{(12)}(t_{12}),r_p^{(13)}(t_{13}) \bigr] + \bigl[ r_p^{(13)}(t_{13}),r_p^{(23)}(t_{23}) \bigr] +
	\bigl[ r_p^{(12)}(t_{12}),r_p^{(23)}(t_{23}) \bigr] = 0 \, ,
\end{equation}
where $t_{ij} \ceqq t_i - t_j$.

\begin{theorem}[cf.~\cite{reshetikhin_1992_the_knizhnik_zamolodchikov_system_as_a_deformation_of_the_isomonodromy_problem}]
	\label{thm:cybe}
	The rational function~\eqref{eq:rational_function_finite} is a solution of the CYBE.
\end{theorem}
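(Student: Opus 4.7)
The plan is to realise $r_p(t)$ as the image, under a natural Lie algebra projection, of a formal generating series that is essentially the standard rational Yang $r$-matrix on $\mf{g}$ with shifted spectral parameters. Granted this, the CYBE for $r_p$ in $\mf{g}_p^{\otimes 3}$ will follow from the classical Yang CYBE in $\mf{g}^{\otimes 3}$.

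First I would record the generating-function identity
\begin{equation}
    \frac{1}{t+z-w} = \sum_{m,l \geq 0} (-1)^{m} \binom{m+l}{l} z^{m} w^{l} t^{-1-m-l} \, ,
\end{equation}
whose coefficients are exactly the binomials appearing in the definition~\eqref{eq:rational_function_finite}. Working in the triple formal loop algebra $\mf{g}\llbracket z_{1} \rrbracket \otimes \mf{g} \llbracket z_{2} \rrbracket \otimes \mf{g}\llbracket z_{3}\rrbracket$, for $i \neq j \in \Set{1,2,3}$ I introduce the generating series
\begin{equation}
    R^{(ij)}(t) := -\sum_{m,l \geq 0} (-1)^{m} \binom{m+l}{l} \biggl( \, \sum_{k} X_{k} z_{i}^{m} \otimes X^{k} z_{j}^{l} \biggr)^{(ij)} t^{-1-m-l} \, ,
\end{equation}
which is formally equal to ``$-\Omega^{(ij)}/(t + z_{i} - z_{j})$'' in view of the identity above. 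By construction, $r_{p}^{(ij)}(t)$ is the image of $R^{(ij)}(t)$ under the tensor cube of the truncation $\mf{g}\llbracket z \rrbracket \twoheadrightarrow \mf{g}_{p}$.

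The central step is then to verify that $R$ itself satisfies the CYBE as a formal Laurent series in the $t_{ij}$, with coefficients in the triple formal loop algebra. The key observation is that multiplication by $z_{i}^{m}$ interacts with the $\mf{g}$-bracket in slot $i$ via $\bigl[ X z_{i}^{m}, Y z_{i}^{m'} \bigr] = [X,Y] z_{i}^{m+m'}$, so introducing the shifted spectral parameters $s_{ij} := t_{ij} + z_{i} - z_{j}$---which still satisfy the cocycle identity $s_{12}+s_{23}=s_{13}$ whenever $t_{12}+t_{23}=t_{13}$---reduces the CYBE for $R$ to the standard rational Yang CYBE
\begin{equation}
    \bigl[ r^{(12)}(s_{12}), r^{(13)}(s_{13}) \bigr] + \bigl[ r^{(12)}(s_{12}), r^{(23)}(s_{23}) \bigr] + \bigl[ r^{(13)}(s_{13}), r^{(23)}(s_{23}) \bigr] = 0
\end{equation}
for $r(s) = -\Omega/s$ in $\mf{g}^{\otimes 3}$, order by order in the formal variables $z_{i}$. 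The latter identity is a classical consequence of the $\ad_{\mf{g}}$-invariance of $\Omega$, equivalently of~\eqref{eq:commutation_quadratic_tensor}, and holds in any $\mf{g}^{\otimes 3}$ for a simple Lie algebra~\cite{belavin_drinfeld_1982_solutions_of_the_classical_yang_baxter_equation_for_simple_lie_algebras}.

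To conclude, the tensor cube of the Lie algebra morphism $\mf{g}\llbracket z \rrbracket \twoheadrightarrow \mf{g}_{p}$ sends the CYBE for $R$ to the sought CYBE for $r_{p}$ in $\mf{g}_{p}^{\otimes 3}$. The main obstacle is the middle step: the double sums arising from the commutators $\bigl[ R^{(ij)}, R^{(kl)} \bigr]$ must be reorganised so that the binomials $\binom{m+l}{l}$ assemble into Taylor coefficients of $s_{ij}^{-1}$ after the shift $t_{ij} \mapsto t_{ij} + z_{i} - z_{j}$. The cleanest execution is to work with the generating function $-\Omega/(t+z-w)$ throughout---importing the classical Yang CYBE wholesale rather than recomputing it coefficient by coefficient---since the binomials arose in the first place from this very expansion.
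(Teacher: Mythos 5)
Your proof is correct and takes essentially the same approach as the paper. The paper likewise identifies $r_p(t)$ with $\Omega$ tensored against the truncated Taylor expansion of $f_t(w_i,w_j)=1/(t+w_i-w_j)$ (via $\mf{g}_p^{\otimes k}\simeq \mf{g}^{\otimes k}\otimes \mb{C}\llbracket w_1,\dotsc,w_k\rrbracket/(w_1^p,\dotsc,w_k^p)$), and then deduces the CYBE from the rational Yang solution---i.e.\ the Arnold relations~\eqref{eq:arnold_identity} combined with Drinfeld--Kohno---with the spectral parameters shifted by the formal variables, exactly your $s_{ij}=t_{ij}+z_i-z_j$; the only cosmetic difference is that the paper works in the finite quotient algebra directly rather than in $\mf{g}\llbracket z\rrbracket^{\otimes 3}$ followed by a final projection.
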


\begin{proof}
	We will reduce the proof to the well-known case $p = 1$, where $\mf g_p = \mf g$.
	In this case we have the classical result that the rational function $r_1(t) = \Omega t^{-1}$ is a skew-symmetric solution of the CYBE~\cite{belavin_drinfeld_1982_solutions_of_the_classical_yang_baxter_equation_for_simple_lie_algebras}, which is an easy consequence of the Drinfeld--Kohno relations
	$\bigl[ \Omega^{(ij)},\Omega^{(ik)}+\Omega^{(jk)} \bigr] = 0$, and the Arnold relations~\cite{arnold_1969_the_cohomology_ring_of_the_group_of_dyed_braids}:
	\begin{equation}
		\label{eq:arnold_identity}
		\frac{1}{t_{ij}t_{jk}} + \frac{1}{t_{jk}t_{ki}} + \frac{1}{t_{ki}t_{ij}}  = 0 \, .
	\end{equation}

	To prove the general case consider the identification $\mf g_p^{\otimes 2} \simeq \mf g^{\otimes 2} \otimes A(2,p)$, where $A(n,p) \ceqq \mb C \llbracket w_1,\dc,w_n \rrbracket \big\slash \mf{I}_p$ is the quotient of the power-series ring by the ideal $\mf{I}_p = \bigl( w_1^p,\dc,w_n^p \bigr)$ generated by $\Set{ w_1^p, \dc, w_n^p }$.
	In this identification $\Omega_{ml} = \Omega \otimes w_1^mw_2^l$, and~\eqref{eq:rational_function_finite} can be written
	\begin{equation}
		r_p(t) = \Omega \otimes \tau_{(0,0)}^{(p)} (f_t) \in \mf g_p^{\otimes 2} \, , \qquad \text{where} \qquad f_t(w_i,w_j) \ceqq \frac{1}{t + w_i - w_j} \, ,
	\end{equation}
	and where $\tau_{(0,0)}^{(p)}(f_t)$ is the class mod $\mf{I}_p$ of the Taylor expansion of $f_t$ at the origin.
	Then, up to the identification $\mf g_p^{\otimes 3} \simeq \mf g^{\otimes 3} \otimes A(3,p)$, the CYBE follows again from \eqref{eq:arnold_identity}, with $t_i$ replaced by $t_i - w_i$, for $i \in \Set{1,2,3}$.
\end{proof}

Hence we have an inverse system of classical $r$-matrices, with respect to the canonical projections $\mf g \llbracket z \rrbracket \big\slash z^{\bullet + 1} \mf g \llbracket z \rrbracket \twoheadrightarrow \mf g \llbracket z \rrbracket \big\slash z^{\bullet} \mf g \llbracket z \rrbracket$, corresponding to an inverse system of flat vector bundles $\bigl( U(n,p),\nabla'_p \bigr)$ over the space of configurations of $J'$-tuples of points in the complex plane.
The inverse limit of the vector bundles is naturally identified with the trivial vector bundle with fibre $U \bigl( \mf g \llbracket z \rrbracket \bigr)^{\!\wh{\otimes} \, \abs J}$, the completion of the $n$-th tensor power of the positive part of the loop algebra.

\begin{remark*}
	The inverse limit $r_\infty(t) = \varprojlim_p r_p(t) \in \mf g^{\otimes 2} [t^{-1}] \llbracket z_1,z_2 \rrbracket$ is a solution of the CYBE in a completion of $\mf g \llbracket z \rrbracket^{\otimes 3} \otimes \ms O_{C_{3} (\mb C)} \bigl( C_{3}(\mb C) \bigr)$.

	Analogously on the representation-theoretic side one may consider characters of the Lie subalgebra
	\begin{equation}
		\mf S^{(\infty)} \ceqq \bigcap_{p \geq 1} \mf S^{(p)} = \mf b^+ \llbracket z \rrbracket \oplus \mb C K \subseteq \wh{\mf g} \, ,
	\end{equation}
	using~\eqref{eq:singular_lie_algebra}.
	Then $\mf S^{(\infty)}_{\ab} \simeq \mf h \llbracket z \rrbracket \oplus \mb C K$, so the induced non-smooth modules $\wh W^{(\infty)}$ depend on infinitely many Cartan parameters (and a level $\kappa$), and are generated over $U(\mc L\mf n^-)$ by a single vector annihilated by $\mf n^+ \llbracket z \rrbracket$.
	Under~\eqref{eq:residue_pairing} the parameters of these modules correspond to principal parts of connections with \emph{essential} singularities.
\end{remark*}

\subsection{Flatness overall}

The 1-form defining the Hamiltonians~\eqref{eq:hamiltonians_infinity} can be written
\begin{equation}
	\varpi''_p = \frac{1}{\kappa + h^{\dual}} \sum_{i \in J'} s_p^{(i\infty)}(t_i) \dif t_i \, ,
\end{equation}
where $s_p \colon \mb C \setminus \Set{0} \to \mf g_p^{\otimes 2}$ is the following rational function:
\begin{equation}
	\label{eq:rational_function_infinity}
	s_p(t) \ceqq \sum_{m,l = 0}^{p-1} \Omega_{m,m+l+1} \otimes \binom{m+l}{l} t^l \, .
\end{equation}

\begin{theorem}
	\label{thm:universal_flatness}
	The universal connection $\nabla_p$ is flat for $p \geq 1$.
\end{theorem}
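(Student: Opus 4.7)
As noted in the Remark preceding the theorem, flatness of $\nabla_p = \dif - \varpi_p$ is equivalent to the commutativity $[\wh{H}_i, \wh{H}_j] = 0$ of the quantum Hamiltonians in $U \bigl( \mf{g}_p \bigr)^{\otimes \, \abs{J}}$ for $i \neq j \in J'$. Decomposing $\wh{H}_\bullet = \wh{H}'_\bullet + \wh{H}''_\bullet$ splits each such bracket into four pieces; the purely unprimed contribution $[\wh{H}'_i, \wh{H}'_j]$ vanishes by Theorem~\ref{thm:cybe} applied to the classical $r$-matrix $r_p$ on the subset $J' \subseteq J$, so only the three ``mixed'' brackets involving at least one $\wh{H}''$-factor remain.

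To handle them I would unify $r_p$ and $s_p$ through generating functions. Extending the calculation in the proof of Theorem~\ref{thm:cybe}, in the identification $\mf{g}_p^{\otimes 2} \simeq \mf{g}^{\otimes 2} \otimes A(2,p)$ one has
\[
	r_p(t) = -\Omega \otimes \tau^{(p)}_{(0,0)} \bigl( (t + w_1 - w_2)^{-1} \bigr) \, , \qquad s_p(t) = \Omega \otimes \tau^{(p)}_{(0,0)} \bigl( w_2 (1 - w_2(t+w_1))^{-1} \bigr) \, ,
\]
the second identity following from the resummation $\sum_{m,l \geq 0} \binom{m+l}{l} (w_1 w_2)^m (w_2 t)^l = (1 - w_2(w_1+t))^{-1}$ applied to~\eqref{eq:rational_function_infinity}. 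Geometrically these are the universal forms of the Laurent expansions $\tau_j \bigl( z_i^{-\bullet} \bigr)$ and $\tau_\infty \bigl( z_i^{-\bullet} \bigr)$ of Lem.~\ref{lem:pole_expansion}: $r_p$ encodes poles at finite marked points, while $s_p$ encodes the expansion at $\infty$ in the local coordinate $z_\infty = 1/z$, with the asymmetric index $\Omega_{m,m+l+1}$ directly manifesting this coordinate change.

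The finish is then to derive the required cancellation from a ``bigger'' instance of Theorem~\ref{thm:cybe}: promote $t_\infty$ to an auxiliary complex variable and consider the 1-form $\tilde{\varpi}_p \coloneqq \frac{1}{\kappa + h^\vee} \sum_{i \neq j \in J} r_p^{(ij)}(t_i-t_j) \dif(t_i-t_j)$ on $C_{n+1}(\mb{C})$, which is flat by Theorem~\ref{thm:cybe}. Re-expanding its slot-$\infty$ contribution in the local coordinate $z_\infty$ via the generating function for $s_p$ turns the three remaining mixed brackets in $[\wh{H}_i, \wh{H}_j]$ into Arnold-type tensor identities in the three formal variables $(w_i, w_j, w_\infty)$, descending from~\eqref{eq:arnold_identity} under the substitution that implements the change of chart at $\infty$. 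The main obstacle will be the combinatorial bookkeeping in $[\wh{H}''_i, \wh{H}''_j]$, where the multinomial factors from the resummation defining $s_p$ have to align against the Arnold denominators; once verified, the cancellation $[\wh{H}'_i, \wh{H}''_j] + [\wh{H}''_i, \wh{H}'_j] + [\wh{H}''_i, \wh{H}''_j] = 0$ completes the proof.
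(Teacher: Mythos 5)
Your approach is essentially the paper's. You identify the same generating function $g_t(w_i, w_\infty) = w_\infty \bigl( 1 - w_\infty(t + w_i) \bigr)^{-1}$ encoding $s_p$, make the same reduction of $\bigl[ \wh{H}_i, \wh{H}_j \bigr]$ to three mixed brackets via Theorem~\ref{thm:cybe}, disjoint-slot commutation and skew-symmetry, and invoke the same Drinfeld--Kohno step to reduce flatness to a generalised Arnold identity in the three formal variables $(w_i, w_j, w_\infty)$. The paper at that point simply states the identity
\begin{equation}
    g_{t_i}(w_i, w_\infty)\, g_{t_j}(w_j, w_\infty) + f_{t_{ij}}(w_i, w_j) \bigl( g_{t_i}(w_i, w_\infty) - g_{t_j}(w_j, w_\infty) \bigr) = 0 \, ,
\end{equation}
whereas you propose to deduce it from the classical Arnold relations~\eqref{eq:arnold_identity} by the substitution implementing the change of chart at $\infty$ (namely $t_k \mapsto 1/w_\infty$ and clearing denominators by $w_\infty$); that substitution indeed works, and is exactly the ``generalised Arnold relations'' alluded to in the closing remark of the section. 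One caveat on the framing of your finish: the auxiliary flat 1-form $\tilde\varpi_p$ on $C_{n+1}(\mb{C})$ does not by itself imply flatness of $\varpi_p$, because re-expanding a flat connection at $\infty$ is not a formal operation that preserves flatness --- the logical content is entirely in the rational identity above, and you must verify it directly rather than inherit it from $\tilde\varpi_p$. Once that is understood as the genuine final step, your argument is sound.
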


\begin{proof}
	Reasoning as in the proof of Thm.~\ref{thm:cybe} consider the function
	\begin{equation}
		g_t(w_i,w_j) \ceqq \frac{w_j}{1 - w_j(t + w_i)} \, .
	\end{equation}
	Then one directly checks that the Taylor expansion of $g_t$ at the origin satisfies
	\begin{equation}
		s_p(t) = \Omega \otimes \tau^{(p)}_{(0,0)} (g_t) \, ,
	\end{equation}
	and we can conclude by proving a version of the CYBE in the Lie algebra $\mf g_p$.

	Namely by Thm.~\ref{thm:cybe} the commutator of two Hamiltonians becomes
	\begin{equation}
		\bigl[ \wh H_i,\wh H_j \bigr]
		= \bigl[ r_p^{(ij)}(t_{ij}),s_p^{(i\infty)}(t_i) \bigr] + \bigl[ r_p^{(ij)}(t_{ij}),s_p^{(j\infty)}(t_j) \bigr] + \bigl[ s_p^{(i\infty)}(t_i),s_p^{(j\infty)}(t_j) \bigr] \, ,
	\end{equation}
	using the fact that actions on disjoint pairs of slots commute, and the skew-symmetry~\eqref{eq:skew_symmetry_classical_r_matrix}.
	Now we can use the standard Drinfeld--Kohno relations to reduce flatness (for all $p \geq 1$) to a variation of the Arnold relations~\eqref{eq:arnold_identity}, namely to the following identity:
	\begin{equation}
		g_{t_i} (w_i,w_{\infty}) g_{t_j} (w_j,w_{\infty}) + f_{t_{ij}}(w_i,w_j) \bigl( g_{t_i}(w_i,w_{\infty}) - g_{t_j}(w_j,w_{\infty}) \bigr) = 0 \, ,
	\end{equation}
	where $f_t = f_t(w_i,w_j)$ is as in the proof of Thm.~\ref{thm:cybe}.
\end{proof}

\begin{remark}
	One can give a more symmetric expression of~\eqref{eq:universal_connection}, with no special role for the marked point at infinity.

	To this end consider the `generating' function
	\begin{equation}
		\label{eq:generating_function}
		\varphi(w_i,w_j) \ceqq \frac{1}{w_i - w_j} \, ,
	\end{equation}
	which is a meromorphic function on $\mb C^2$ with poles along $\Set{ w_i = w_j } \subseteq \mb C^2$---and only there.
	It can be extended (by zero) to a meromorphic function on the complex surface $\Sigma^2 \setminus \Set{ (\infty,\infty) }$, so we can take Taylor expansions $\tau_{(p_i,p_j)} (\varphi)$ of $\varphi$ at any pair of \emph{distinct} points $p_i,p_j \in \Sigma$---using the local coordinates $w_i^{-1}$ and $w_j^{-1}$ at infinity.

	Then analogously to the above one checks that
	\begin{equation}
		\tau^{(p)}_{(p_i,p_j)} (\varphi) = r_p(t_{ij}) \, , \qquad \tau^{(p)}_{(p_i,\infty)}(\varphi) = s_p(t_i) \, ,
	\end{equation}
	for points $p_i, p_j \in \Sigma$ at finite distance of coordinates $t_i,t_j \in \mb C$, respectively.
	Hence
	\begin{equation}
		\varpi_p = \frac{1}{\kappa + h^{\dual}} \sum_{i \neq j \in J} \tau^{(p)}_{(p_i,p_j)}(\varphi) \dif t_{ij} \, ,
	\end{equation}
	and all marked points are treated the same.

	Then the flatness of~\eqref{eq:universal_connection} for $p \geq 1$ is equivalent to \emph{generalised} Arnold relations, relating the Taylor expansions of~\eqref{eq:generating_function} at pairs extracted from a triple of distinct points on the Riemann sphere.
\end{remark}

Hence we find again an inverse system of flat vector bundles $\bigl( U(J,p),\nabla_p \bigr)$, over the space of configurations of $J'$-tuples of points in the complex plane.

\subsection{Connection on coinvariants}

The universal connection~\eqref{eq:universal_connection} has a well-defined action on sections taking values in the space of $\mf g$-coinvariants of $U (\mf g_p)^{\otimes n}$.

To prove this consider the canonical embedding $\mf g \hookrightarrow \mf g_p \simeq \mf g \ltimes \mf b_p$ and the universal embedding $\mf g_p \hookrightarrow U (\mf g_p)$.
Composing them we let $\mf g$ act on $U(\mf g_p)$ in the regular representation, and finally take the tensor product action (as in~\eqref{eq:action_meromorphic_functions} in the case of constant functions).
Then we get a $\mf g$-action on differential forms with values in the flat vector bundle $\bigl( U(J,p), \nabla_p \bigr)$.

\begin{proposition}
	\label{prop:g_invariance_universal_connection}
	The $\mf g$-action is flat for all $p \geq 1$.
\end{proposition}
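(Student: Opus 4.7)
The plan is to show that for every $X \in \mf{g}$ the diagonal action $\Delta(X) \coloneqq \sum_{j \in J} X^{(j)}$ on $U(\mf{g}_{p})^{\otimes \abs{J}}$---where each $X^{(j)}$ is left multiplication by $X$ on the $j$-th slot, via the composition $\mf{g} \hookrightarrow \mf{g}_{p} \hookrightarrow U(\mf{g}_{p})$---commutes with the connection 1-form $\varpi_{p} = \varpi_{p}' + \varpi_{p}''$ of~\eqref{eq:universal_connection}. Since the coefficient functions appearing in $\varpi_{p}$ are scalar-valued (depending only on $\bm{t}$), this will in turn imply that $\Delta(X)$ commutes with $\nabla_{p} = \dif - \varpi_{p}$, so that $\nabla_{p}$ preserves the subsheaf $\mf{g} \cdot U(\mf{g}_{p})^{\otimes \abs{J}}$ of sections of $U(J,p)$ and descends to a connection on $\mf{g}$-coinvariants, flat by Thm.~\ref{thm:universal_flatness}.

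The algebraic input is precisely the identity~\eqref{eq:commutation_quadratic_tensor}, which---after applying the embedding $\iota^{(ab)}$ of~\eqref{eq:embedding_quadratic_tensors}---reads
\begin{equation}
    \bigl[ X^{(a)} + X^{(b)},\, \Omega^{(ab)}_{ml} \bigr] = 0
\end{equation}
for any pair of distinct slots $a, b \in J$ and any $m, l \in \mb{Z}_{\geq 0}$. This is the transfer to the loop algebra of the $\mf{g}$-invariance of the standard quadratic tensor $\Omega \in \mf{g}^{\otimes 2}$, equivalent to the centrality of the Casimir~\eqref{eq:quadratic_casimir}.

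Inspecting~\eqref{eq:hamiltonians_finite} and~\eqref{eq:hamiltonians_infinity}, both $\wh{H}_{i}'$ and $\wh{H}_{i}''$ are $\ms{O}_{C_{n}(\mb{C})}$-linear combinations of two-slot tensors of the form $\Omega^{(ij)}_{ml}$ (with $i, j \in J'$ distinct) and $\Omega^{(i\infty)}_{m,m+l+1}$, respectively. For any slot $k \in J \setminus \Set{i,j}$ the operator $X^{(k)}$ automatically commutes with such a tensor, since it acts on a disjoint slot; for $k \in \Set{i,j}$ the displayed identity applies. Summing contributions yields $[\Delta(X), \wh{H}_{i}'] = 0 = [\Delta(X), \wh{H}_{i}'']$, and hence $[\Delta(X), \varpi_{p}] = 0$ as a 1-form with values in $\End \bigl( U(\mf{g}_{p})^{\otimes \abs{J}} \bigr)$.

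There is no serious obstacle: the argument is a direct assembly of the Casimir-invariance identity~\eqref{eq:commutation_quadratic_tensor} with the explicit two-slot form of the Hamiltonians~\eqref{eq:hamiltonians_finite}--\eqref{eq:hamiltonians_infinity}. The only mild point to check is that the diagonal $\mf{g}$-action includes the slot at infinity, so that the term $\wh{H}_{i}''$ coupling slot $i$ to $\infty$ is genuinely annihilated (by $X^{(i)} + X^{(\infty)}$ rather than $X^{(i)}$ alone), which is the reason for inflating $J' \subseteq J$ throughout \S\ref{sec:yang_baxter}.
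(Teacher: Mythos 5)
Your proof is correct and takes essentially the same route as the paper's: reduce flatness of the $\mf{g}$-action to the identity $\bigl[ X^{(i)} + X^{(j)},\, \Omega^{(ij)}_{ml} \bigr] = 0$, which is the transfer of the Casimir-invariance identity~\eqref{eq:commutation_quadratic_tensor} (via the slot embeddings~\eqref{eq:embedding_quadratic_tensors}) to the two slots $i,j$ picked out by each term of $\varpi_p$, and observe that all remaining slots commute trivially. Your closing remark---that the diagonal $\mf{g}$-action must include the slot at infinity, so that $\wh{H}_i''$ is killed by $X^{(i)}+X^{(\infty)}$ and not by $X^{(i)}$ alone---is precisely the point the paper's appendix glosses over: its displayed sums are written as $\sum_{k\in J'}$ (and the ambient space as $U(\mf{g}_p)^{\otimes n}$ rather than $U(\mf{g}_p)^{\otimes \abs{J}}$), which read literally would omit the $k=\infty$ contribution needed for $\varpi_p''$; the intended and correct range is $k\in J$, exactly as you state.
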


Note this is a particular case of a compatibility such as~\eqref{eq:compatibility_connections}, for constant sections of the trivial bundle $C_n(\mb C) \times \mf g \to C_n(\mb C)$, equipped with the trivial connection.

\begin{proof}
	Postponed to \S~\ref{sec:proof_prop_g_invariance_universal_connection}.
\end{proof}

It follows that~\eqref{eq:universal_connection} preserves sections with values in $\mf g U (\mf g_p)^{\otimes \, \abs J} \subseteq U (\mf g_p)^{\otimes \, \abs J}$, so that a reduced (flat) connection is well-defined on the space of $\mf g$-coinvariants of the tensor product.

\section{On conformal transformations}
\label{sec:conformal_transformations}

Consider the action of M\"{o}bius transformations on $\Sigma = \mb{P}(\mb C^2)$, that is
\begin{equation}
	g.\bigl[ t_1 \colon t_2 \bigr] = \bigl[ at_1 + bt_2 \colon c t_1 + d t_2 \bigr] \, ,
\end{equation}
for $(t_1,t_2) \in \mb C^2 \setminus \Set{\bm{0}}$, with $g = g(a,b,c,d)$ given by numbers $a,b,c,d \in \mb C$ such that $ad - bc = 1$.
In the standard affine chart $U = \Sigma \setminus \set{ [1 \colon 0] } \xrightarrow{t}{} \mb C$ we then have the subgroup of affine transformation of the complex plane, with diagonal action on $C_n(\mb C) \subseteq \mb C^n$, and with induced pullback (right) action on sections of vector bundles over that base.

In particular the translations $t \mapsto t + b$ correspond to $a = d = 1$ and $c = 0$.
This is the 1-parameter subgroup corresponding to the infinitesimal generator $E \in \Lie\bigl( \PSL(2,\mb C)\bigr) = \mf{sl}(2,\mb C)$, and the associated infinitesimal action reads
\begin{equation}
	\label{eq:infinitesimal_translations}
	\eval[2]{\od{ \bigl( \wh{\bm w} \circ \gamma \bigr) (\varepsilon)}{\varepsilon}}_{\varepsilon = 0} = \eval[2]{\od{ \wh{\bm w} (\bm t + \varepsilon)}{\varepsilon}}_{\varepsilon = 0} = \sum_{i \in J'} \pd{\wh{\bm w}}{t_i} \, ,
\end{equation}
considering the path $\gamma \colon \varepsilon \mapsto g(1,\varepsilon,0,1)$.

Analogously dilations correspond to the 1-parameter subgroup generated by $H \in \mf{sl}(2,\mb C)$, and the associated infinitesimal action is given by the Euler vector field
\begin{equation}
	\label{eq:infinitesimal_dilations}
	\eval[2]{\od{ \bigl( \wh{\bm w} \circ \gamma \bigr) (\varepsilon)}{\varepsilon}}_{\varepsilon = 0} = \eval[2]{\od{ \wh{\bm w} \bigl( (1 + \varepsilon)^2 \bm t \bigr)}{\varepsilon}}_{\varepsilon = 0} = 2 \sum_{i \in J'} t_i \pd{\wh{\bm w}}{t_i} \, ,
\end{equation}
considering the path $\gamma \colon \varepsilon \mapsto g \bigl( 1+\varepsilon,0,0,(1+\varepsilon)^{-1} \bigr)$.

\begin{proposition}
	\label{prop:affine_equivariance}

	Suppose the module at infinity is tame.
	Then the action of affine transformations on horizontal sections of the bundle of covacua reads
	\begin{equation}
		\label{eq:affine_transformations}
		\wh{\bm w} (\bm t') = \prod_{i \in J'} \exp \bigl( a L_0^{(i)} \bigr) \cdot \wh{\bm w} (\bm t) \, ,
	\end{equation}
	where $\bm t' = (t'_i)_{i \in J'}$ with $t'_i = e^{2a} t_i + b$.
	In particular horizontal sections are invariant under translations.
\end{proposition}

\begin{proof}
	Postponed to~\ref{sec:proof_prop_affine_equivariance}.
\end{proof}

\begin{remark*}
	As in the tame case, the $\mf g$-coinvariance implies
	\begin{equation}
		\sum_{i \neq j \in J} \Omega^{(ij)} \wh{\bm w} + \sum_{k \in J} \Omega^{(kk)} \wh{\bm w} = 0 \, ,
	\end{equation}
	in the space $\ms H$.
	The action of $\Omega^{(kk)}$ is that of the quadratic Casimir~\eqref{eq:quadratic_casimir} on the $k$-th slot, so this term acts diagonally and can be exponentiated to find the usual conformal weight (cf. Rmk.~\ref{rem:conformal_weight}).
	The point is that in general the dilation action has further nonscalar terms.
\end{remark*}

\section{A different dynamical term at infinity}
\label{sec:dynamical_term}

In this section we generalise the dynamical KZ connection~\cite{felder_markov_tarasov_varchenko_2000_differential_equations_compatible_with_kz_equations}, varying the setup of \S~\ref{sec:setup}.

Namely note another natural family of Lie algebras $\ul{\mf S}^{(p)} \subseteq \mf S^{(p)} \subseteq \wh{\mf g}$ is given by
\begin{equation}
	\ul{\mf S}^{(p)} \ceqq \mf h \llbracket z \rrbracket + z^p \mf g \llbracket z \rrbracket \oplus \mb C K\, .
\end{equation}
The derived Lie algebra of $\ul{\mf S}^{(1)}$ yields the first `level subalgebra' of~\cite{fedorov_2010_irregular_wakimoto_modules_and_the_casimir_connection}, then the two differ for $p \geq 2$.
One can then define (smooth) induced modules $\ul{\wh W}$ as in \S~\ref{sec:setup}, where $\ul{\wh W} = \ul{\wh W}^{(p)}_{\ul \chi}$ depends on a character $\ul \chi \colon \ul{\mf S}^{(p)} \to \mb C$.
However one does not recover the standard affine Verma module as the starting element of the family, contrary to~\eqref{eq:affine_singular_module}---which is one motivation behind Def.~\ref{def:affine_singular_module}.

Moreover one has $\ul{\mf S}^{(p)} \simeq \mf h_{2p} \oplus \mb CK$, analogously to Lem.~\ref{lem:abelianisation}, so for $p = 1$ a character is defined by elements $\ul{\lambda} \in \mf h^{\dual}$ and by the irregular Cartan term $\mu \in ( \mf h \otimes z )^{\dual}$ (plus the choice of a level $\kappa$).
Hence for $p = 1$ we see that~\eqref{eq:residue_pairing} matches up the parameters of $\ul{\wh W}$ with (formal normal forms of) principal parts of meromorphic connections at poles of order two, but in general only poles of even order can be obtained with this construction, contrary to~\eqref{eq:affine_singular_module}---which is another motivation behind Def.~\ref{def:affine_singular_module}.

So we can put the module $\ul{\wh W} = \ul{\wh W}^{(1)}_{\ul \chi}$ at infinity in the tensor product $\wh{\mc H}$, and consider the spaces of coinvariants $\ms H$ as in \S~\ref{sec:irregular_conformal_blocks}.
The proofs of Props.~\ref{prop:surjectivity_coinvariants}--\ref{prop:auxiliary_tame_module} can be adapted by introducing suitable filtrations on $\ul{\wh W}$ and $\ul W = U (\mf g \llbracket z \rrbracket ) \ul w$, where $\ul w \in \ul{\wh W}$ is the cyclic vector, as well as the whole of \S~\ref{sec:compatibility_connections}.
Hence in brief one can use $\ul W$ as auxiliary module at infinity, which yields a different `dynamical' Cartan term in the reduced connection---with respect to~\eqref{eq:action_infinity_order_two}.
Namely~\eqref{eq:action_infinity_order_two} simplifies to
\begin{equation}
	\mc D_i (\bm{\wh v} \otimes \ul w) = \frac{1}{\kappa + h^{\dual}} \sum_k \mu_k H_k^{(i)} \cdot \wh{\bm v} \otimes \ul w \, ,
\end{equation}
where $(H_k)_k$ is a $(\cdot \mid \cdot)$-orthonormal basis of $\mf h$, using $(\mf n^+ \oplus \mf n^-) \otimes z_{\infty} \cdot \ul w = 0$, $H_k z_{\infty} \cdot \ul w = \mu_k \ul w$, and writing $\mu_k = \Braket{ \mu | H_k z_{\infty} }$.

We see the reduced connection generalises the dynamical KZ equations, i.e.~\cite[Eq.~3]{felder_markov_tarasov_varchenko_2000_differential_equations_compatible_with_kz_equations}, and it coincides with it when the modules over finite points are tame.\footnote{Replace $\kappa + h^{\dual} \in \mb C$ with `$\kappa$' and $\sum_k \mu_k H_k \in \mf h$ with `$\mu$' to retrieve the exact~\cite[Eq.~3]{felder_markov_tarasov_varchenko_2000_differential_equations_compatible_with_kz_equations}.}
So we recover the connection of Felder--Markov--Tarasov--Varchenko (FMTV), over variations of marked points, as a particular case of this construction.

\begin{remark}
	Note the whole of the FMTV connection also allows for variations of the irregular part $\mu \in (\mf h \otimes z)^{\dual}$, in addition to the deformations \`{a} la Klar\`es considered here~\cite{klares_1979_sur_une_classe_de_connexions_relatives}.
	In particular when there is only one simple pole the resulting flat connection for variations of $\mu$ is the DMT connection~\cite{millson_toledanolaredo_2005_casimir_operators_and_monodromy_representations_of_generalised_braid_groups,toledanolaredo_2002_a_kohno_drinfeld_theorem_for_quantum_weyl_groups}, which is derived from a representation-theoretic setup in~\cite[\S~3.11]{fedorov_2010_irregular_wakimoto_modules_and_the_casimir_connection}, and~\cite[\S~3.7]{feigin_frenkel_toledanolaredo_2010_gaudin_models_with_irregular_singularities} (for the latter see also~\cite{vinberg_1990_some_commutative_subalgebras_of_a_universal_enveloping_algebra}).
\end{remark}

\begin{remark}[On quantisation of isomonodromy connections]
	Just as in the case of the KZ connection, a different derivation of these flat connections has been obtained by (filtered) deformation quantisation of isomonodromy systems, this time importantly for \emph{irregular} meromorphic connections.

	Namely~\cite{boalch_2002_g_bundles_isomonodromy_and_quantum_weyl_groups} derived the DMT connection from the quantisation of a dual version of the Schlesinger system.
	This is related to the usual Schlesinger system by the Harnad duality~\cite{harnad_1994_dual_isomonodromic_deformations_and_moment_maps_to_loop_algebras}, i.e. the Fourier--Laplace transform (cf.~\cite{tarasov_varchenko_2002_duality_for_knizhnik_zamolodchikov_and_dynamical_equations} on the quantum side).
	In the same spirit, the whole of FMTV connection can be obtained by quantising the isomonodromy system of Jimbo--Miwa--M\^{o}ri--Sato~\cite{jimbo_miwa_mori_sato_1980_density_matrix_of_an_impenetrable_bose_gas_and_the_fifth_painleve_transcendent} (see~\cite[\S~11]{rembado_2019_simply_laced_quantum_connections_generalising_kz}; more generally see op. cit. and~\cite{rembado_2020_symmetries_of_the_simply_laced_quantum_connections_and_quantisation_of_quiver_varieties} for a further extension to connections with poles of order three including all the above cases).
\end{remark}

\section*{Outlook}

As explained in the introduction we also wish to consider flat quantum connections along variations of irregular types (i.e. variations of `wild' Riemann surface structures on the sphere~\cite{boalch_2012_hyperkaehler_manifolds_and_nonbelian_hodge_theory_of_irregular_curves}).
Two viable viewpoints to introduce them are:
\begin{enumerate}
	\item the quantisation of the \emph{full} irregular isomonodromy connections, in the spirit of~\cite{boalch_2002_g_bundles_isomonodromy_and_quantum_weyl_groups,rembado_2019_simply_laced_quantum_connections_generalising_kz}, generalising the simply-laced quantum connections (which quantise the simply-laced isomonodromy systems~\cite{boalch_2012_simply_laced_isomonodromy_systems});

	\item  considering quantum symmetries: the quantum/Howe duality~\cite{baumann_1999_the_q_weyl_group_of_a_q_schur_algebra} was used in~\cite{toledanolaredo_2002_a_kohno_drinfeld_theorem_for_quantum_weyl_groups} to relate KZ and the `Casimir' connection of De Concini and Millson--Toledano Laredo (DMT)~\cite{millson_toledanolaredo_2005_casimir_operators_and_monodromy_representations_of_generalised_braid_groups}, and at the level of isomonodromy systems corresponds to the Harnad duality~\cite{harnad_1994_dual_isomonodromic_deformations_and_moment_maps_to_loop_algebras}.
	      An analogous quantisation of the Fourier--Laplace transform may be taken here in order to turn the variations of marked points into variations of irregular types, extending the viewpoint of~\cite{boalch_2007_regge_and_okamoto_symmetries,rembado_2020_symmetries_of_the_simply_laced_quantum_connections_and_quantisation_of_quiver_varieties}.
\end{enumerate}

Another natural direction to pursue is the higher-genus case, noting in that case the moduli spaces of connections on holomorphically trivial bundles have positive codimension inside the full de Rham spaces.

Finally one may try to introduce integrality conditions, and lift this Lie-algebra representation setup to Lie groups, with a view towards the geometric quantisation of coadjoint $G_p$-orbits (along the lines of the Borel--Weyl--Bott theorem~\cite{serre_1995_representations_lineaires_et_espaces_homogenes_kaehleriens_des_groupes_de_lie_compacts_d_apres_armand_borel_et_andre_weil,tits_1955_sur_certaines_classes_d_espaces_homogens_de_groupes_de_lie,bott_1957_homogeneous_vector_bundles}, or more generally of the orbit method~\cite{kirillov_2004_lectures_on_the_orbit_method}).
Another approach we will try in this direction is that of the quantisation of the nilpotent Birkhoff orbits $\mc O_B \subseteq \mf b_p^{\dual}$~\cite{boalch_2001_symplectic_manifolds_and_isomonodromic_deformations}.\fn{The `semisimple nonresonant' coadjoint orbits in dual truncated current Lie algebras have been quantized in~\cite{calaque_felder_rembado_wentworth_2024_wild_orbits_and_generalised_singularity_modules_stratifications_and_quantisation}.}

\section*{Acknowledgments}

We thank Philip Boalch and Leonid Rybnikov for helpful discussions.

\appendix

\section{Standard notions/notations}
\label{sec:appendix_1}

\subsection*{Duals}

The (algebraic) dual of a vector space $W$ is denoted by $W^{\dual} = \Hom_{\mb C}(W,\mb C)$, and the natural pairing $W^{\dual} \otimes W \to \mb C$ by $\alpha \otimes w \mapsto \Braket{\alpha | w }$.
If $I$ is a set and $W = \bigoplus_i \mc F_i(W)$ an $I$-graded vector space then the \emph{restricted/graded} dual of $(W,\mc F_{\bullet})$ is the $I$-graded vector space $W^* \ceqq \bigoplus_{i \in I} \mc F_i(W)^{\dual} \subseteq \prod_{i \in I} \mc F_i(W)^{\dual} \simeq W^{\dual}$.

\subsection*{Gradings and filtrations}

If $(I,\leq)$ is a totally ordered set and $W = \bigoplus_{i \in I} \mc F_i(W)$ an $I$-graded vector space, the associated $I$-filtration on $W$ is defined by the subspaces $\mc F_{\leq i} \ceqq \bigoplus_{j \leq i} \mc F_j(W)$.

If $I$ and $J$ are sets and $W_j = \bigoplus_{i \in I} \mc F^{(j)}_i (W_j)$ a $J$-family of $I$-graded vector spaces, the \emph{tensor product} $I^{J}$-grading on $W = \bigotimes_{j \in J} W_j$ is defined by the subspaces
\begin{equation}
	\bm{\mc F}_{\bm i} \ceqq \Biggl( \, \bigotimes_{j \in J} \mc F^{(j)}_{\bm i(j)} \Biggr) \, , \qquad \text{for } \bm i \colon J \to I \, .
\end{equation}
If further $(I,\leq)$ is a totally-ordered $\mb Z$-module then the \emph{tensor product} $I$-filtration on $W$ is defined by the subspaces
\begin{equation}
	\bm{\mc F}_{\leq i} \ceqq \bigoplus_{\sum_{j \in J} \bm i(j) \leq i} \bm{\mc F}_{\bm i} \, , \qquad \text{for } i \in I \, .
\end{equation}

\subsection*{Lie-algebraic constructions}

Let $L$ be a Lie algebra.
The \emph{abelianisation} of $L$ is the abelian Lie algebra $L_{\ab} \ceqq L \big\slash \bigl[ L,L \bigr]$, and the \emph{opposite} of $L$ is the Lie algebra $L^{\op}$ on the same vector space, with bracket $\bigl[ X,Y \bigr]_{L^{\op}} \ceqq \bigl[ Y,X \bigr]_L$ for $X,Y \in L$.

If $p \geq 1$ is an integer and `$z$' a variable then the associated truncated-current Lie algebra of \emph{depth} $p$ is
\begin{equation}
	L_p \ceqq L\llbracket z \rrbracket \big\slash z^p L\llbracket z \rrbracket \simeq L \otimes \bigl( \mb C \llbracket z \rrbracket \big\slash z^p \mb C \llbracket z \rrbracket \bigr) \, ,
\end{equation}
coming with a projection $L_p \twoheadrightarrow L_1 = L$.
There is then a canonical vector-space isomorphism $L_p \simeq \bigoplus_{i = 0}^{p-1} L \otimes z^i$, which can be upgraded to an isomorphism of Lie algebras if one defines a Lie bracket on the direct sum by truncating terms of degree greater than $p-1$.

If $W$ is a left $L$-module then the space of $L$-\emph{coinvariants} is $W_L \ceqq W \big\slash L W$, where $L W \ceqq \sum_{X \in L} X W \subseteq W$---in particular $L_{\ab}$ is the space of $\ad_L$-coinvariants.

\section{Computations}
\label{sec:appendix_2}

\subsection{Proof of Prop.~\ref{prop:sugawara_eigenvalues}}
\label{sec:proof_prop_sugawara_eigenvalues}

\begin{proof}
	Set $a_k^{(j)} \ceqq \Braket{ a_j | H_k z^j }$ for $k \in \Set{ 0, \dc, r }$ and $j \in \Set{ 0, \dc, p-1 }$, and further $a_{\alpha}^{(j)} \ceqq \Braket{ a_j | H_{\alpha} z^j }$ for $\alpha \in \mc R$.

	By~\eqref{eq:annihilator_cyclic_vector} we see that $\colon X_kz^{-j} X^k z^{n+j} \colon w \neq 0$ implies $1 - p \leq j \leq p - 1 - n$, so $n \leq 2(p - 1)$ is necessary for nonvanishing terms.

	Now for $n \in \Set{ p-1, \dc, 2(p-1) }$ and $j \in \Set{ 1-p, \dc, p-1-n}$ one has
	\begin{equation*}
		-j, n + j \in \Set{ 1 - p + n, \dc, p-1 } \subseteq \Set{ 0, \dc, p-1 } \, ,
	\end{equation*}
	so that the normal-ordered products are void in~\eqref{eq:sugawara_operator}.
	Then for $\alpha \in \mc R^+$ and $i \in \Set{ 1, \dc, r }$ one computes
	\begin{equation}
		H_k z^{-j} H_kz^{j+n} w = a_k^{(-j)}a_k^{(j+n)} w \, , \qquad E^{\alpha} z^{-j} E_{\alpha} z^{n + j} w = 0 \, ,
	\end{equation}
	and
	\begin{equation}
		\frac{(H_{\alpha} \mid H_{\alpha})}{2} E_{\alpha} z^{-j} E^{\alpha} z^{n + j} w = H_{\alpha} z^n w = \delta_{n,p-1} a_{\alpha}^{(n)} w \, .
	\end{equation}

	Hence
	\begin{equation}
		\begin{split}
			2\bigl( \kappa & + h^{\dual} \bigr) L_n w
			= \sum_{j = 1-p}^{p-1-n} \left( \sum_{k = 1}^{r} \bigl( H_k z^{-j} H_k z^{j+n} \bigr) + \sum_{\alpha \in \mc R^+} \bigl( E_{\alpha} z^{-j} E^{\alpha} z^{j+n} \bigr) \right) w                             \\
			               & = \left( \sum_{j,k} \bigl( a_k^{(-j)} a_k^{(j+n)} \bigr) + \delta_{n,p-1} (2p-n-1) \sum_{\alpha \in \mc R^+} \Bigl( \frac{(\alpha \mid \alpha)}{2} a_{\alpha}^{(n)} \Bigr) \right) w \, ,
		\end{split}
	\end{equation}
	which implies~\eqref{eq:sugawara_eigenvalues_I} and~\eqref{eq:sugawara_eigenvalues_II} using $\frac{ (\alpha \mid \alpha)}{2} \Braket{ \mu | H_{\alpha} z^i } = ( \alpha \mid \mu )$, for $\mu \in \mf h^{\dual} \otimes z^i$.
\end{proof}

\subsection{Proof of Prop.~\ref{prop:irregular_kz_connection}}
\label{sec:proof_prop_irregular_kz_connection}

\begin{proof}
	Using the general case of~\eqref{eq:pole_expansion_finite} yields
	\begin{equation}
		X \otimes \tau_j (z_i^{-m}) \wh w_j = \sum_{l = 0}^{r_j-1} \binom{m+l-1}{l} \frac{ X z_j^l \wh w_j}{(t_i - t_j)^l(t_j - t_i)^m} \, , \quad X \otimes \tau_{\infty} (z_i^{-m}) \wh v_{\infty} = 0 \, ,
	\end{equation}
	for $X \in \mf g$, $i \neq j \in J'$, $\wh w_j \in W_j$ and $\wh v_{\infty} \in V_{\infty}$---since $z_{\infty} \mf g \llbracket z_{\infty} \rrbracket V_{\infty} = 0 = z_j^{r_j} \mf g \llbracket z_j \rrbracket V_j$.

	Hence by~\eqref{eq:identity_coinvariants} one has the identity $\bigl( X \otimes z_i^{-m} \bigr)^{(i)} \wh{\bm w} \otimes \wh v_{\infty} = \wh{\bm w}_{i,m,X} \otimes \wh v_{\infty}$ inside $\ms H$, where
	\begin{equation}
		\wh{\bm w}_{i,m,X} = -\sum_{j \in J' \setminus \Set{i}} \Biggl( \sum_{l = 0}^{r_j - 1}  \binom{m+l-1}{l} \frac{ \bigl( X z^l \bigr)^{(j)} \wh{\bm w} }{(t_i - t_j)^l(t_j - t_i)^m} \Biggr) \, .
	\end{equation}
	The result then follows from~\eqref{eq:action_sugawara_minus_one_finite_module_general}.
\end{proof}

\subsection{Proof of Prop.~\ref{prop:g_invariance_universal_connection}}
\label{sec:proof_prop_g_invariance_universal_connection}

\begin{proof}
	We prove the $\mf g$-action commutes with $\nabla_p \colon \Omega^0 \bigl( U(n,p) \bigr) \to \Omega^1 \bigl( U(n,p) \bigr)$.
	Since the $\mf g$-action is independent of the point on the base space, this is equivalent to $\varpi'_p \otimes X \psi - X \bigl( \varpi'_p \otimes \psi \bigr) = 0 = \varpi''_p \otimes X \psi - X \bigl( \varpi''_p \otimes \psi \bigr)$, for $X \in \mf g$.

	Now by~\eqref{eq:hamiltonians_finite} one has
	\begin{equation}
		\begin{split}
			(\kappa & + h^{\dual}) \Bigl( \varpi'_p \otimes X \psi - X \bigl( \varpi'_p\otimes \psi \bigr) \Bigr)                                                                                            \\
			        & = \sum_{i \neq j} \sum_{m,l = 0}^{p-1} (-1)^m \binom{m+l}{l} t_{ij}^{-1-m-l} \dif t_{ij} \otimes \Biggl( \, \sum_{k \in J'} \bigl[ \Omega^{(ij)}_{ml} X^{(k)} \bigr] \psi \Biggr) \, ,
		\end{split}
	\end{equation}
	and analogously by~\eqref{eq:hamiltonians_infinity}
	\begin{equation}
		\begin{split}
			(\kappa + h^{\dual}) \Bigl( & \varpi''_p \otimes X \psi - X \bigl( \varpi''_p\otimes \psi \bigr) \Bigr)                                                                                      \\
			                            & = \sum_{i \in J'} \sum_{m,l = 0}^{p-1} \binom{m+l}{l} t_i^l \otimes \Biggl( \, \sum_{k \in J'} \bigl[ \Omega^{(i\infty)}_{ml},X^{(k)} \bigr] \psi \Biggr) \, .
		\end{split}
	\end{equation}
	Hence it is enough to show that
	\begin{equation}
		\sum_{k \in J'} \bigl[ \Omega_{ml}^{(ij)},X^{(k)} \bigr] = 0 \in U ( \mf g_p )^{\otimes n} \, ,
	\end{equation}
	for all $i \neq j \in J$ and for all $m,l \in \mb Z$.
	Finally by~\eqref{eq:quadratic_tensor} we have
	\begin{equation}
		\sum_{k \in J'} \bigl[ \Omega^{(ij)}_{ml},X^{(k)} \bigr]
		= \sum_r \Bigl( \bigl[ X_r,X \bigr] z^m \Bigr)^{(i)} \bigl( X_r z^l \bigr)^{(j)} + \bigl( X_r z^m \bigr)^{(i)} \Bigl( \bigl[ X_r,X \bigr] z^l \Bigr)^{(j)} \, ,
	\end{equation}
	where we let $(X_r)_r$ be a $(\cdot \mid \cdot)$-orthonormal basis of $\mf g$, which vanishes by~\eqref{eq:commutation_quadratic_tensor}.
\end{proof}

\subsection{Proof of Prop.~\ref{prop:affine_equivariance}}
\label{sec:proof_prop_affine_equivariance}

\begin{proof}

	Indeed if $\wh{\bm w}$ is a $\nabla'_p$-horizontal section of $U(J,p) \to C_n(\mb C)$ then
	\begin{equation}
		E \wh{\bm w} = \sum_{i \in J'} \Biggl( \sum_{j \in J' \setminus \Set{i}} r_p^{(ij)}(t_{ij}) \Biggr) \bm{\wh w} \, ,
	\end{equation}
	which vanishes by the skew-symmetry~\eqref{eq:skew_symmetry_classical_r_matrix}, and which implies the statement about translations after taking $\mf g_p$-modules.

	As for dilations, in the universal case of a $\nabla'_p$-horizontal section one finds
	\begin{equation}
		\frac{H}{2} \wh{\bm w} = \sum_{i \in J'} \Biggl( \sum_{j \in J' \setminus \Set{i}} t_i r_p^{(ij)} (t_{ij}) \Biggr) \wh{\bm w} = \sum_{i \neq j \in J'} t_{ij} r_p^{(ij)}(t_{ij}) \wh{\bm w} \, ,
	\end{equation}
	and we must consider the induced action on finite singular modules.
	Now one computes
	\begin{equation}
		L_0 \wh w = \frac{1}{\kappa + h^{\dual}} \sum_{j = 1}^p \Biggl( \sum_k X_k z^{-j} X_k z^j \Biggr) \wh w, \quad \text{ for } \wh w \in W \, ,
	\end{equation}
	analogously to~\eqref{eq:action_sugawara_minus_one_finite_module_general}, using a $( \cdot \mid \cdot )$-orthonormal basis $(X_k)_k$ of $\mf g$.
	Then reasoning as in \S~\ref{sec:reduced_connection_tame_at_infinity} the induced slot-wise action on coinvariants is
	\begin{equation}
		L_0^{(i)} \wh{\bm w} = -\frac{1}{\kappa + h^{\dual}} \sum_{j \in J' \setminus \Set{i}} \Biggl( \sum_{m = 0}^{r_i - 1} \sum_{l = 0}^{r_j - 1} \binom{m+l}{l} (-1)^m t_{ij}^{-m-l} \Omega^{(ij)}_{ml} \Biggr) \wh{\bm w} \, ,
	\end{equation}
	with tacit use of the projection $\pi_{\ms H} \colon \mc H'_{\abs{\bm \lambda}} \to \ms H$, and on the whole
	\begin{equation}
		\frac{H}{2} \wh{\bm w} = \bigl( L_0 - L_0^{(\infty)} \bigr) \wh{\bm w} = \sum_{i \in J'} L_0^{(i)} \wh{\bm w} \, ,
	\end{equation}
	by~\eqref{eq:rational_function_finite}.
	This is the action of an endomorphism on the \emph{finite-dimensional} vector space $\ms H$, and the statement follows by integrating the resulting (linear, first-order) differential equation.
\end{proof}

\bibliographystyle{amsplain}
\bibliography{/home/gabriele/Desktop/bibliography_macros/bibliography}
\end{document}